\documentclass[a4paper,11pt,twoside]{amsart}

\usepackage{latexsym}
\usepackage{amssymb}
\usepackage{amscd}
\usepackage[all]{xy}
\usepackage[mathscr]{euscript}
\usepackage{dsfont}
\usepackage{hyperref}
\usepackage{graphicx}
\usepackage{amsfonts}
\usepackage{amsmath}
\usepackage{amsthm}
\usepackage{aliascnt}

\relax 
\RequirePackage{xspace}

\newcommand{\thought}[1]{}
\renewcommand{\thought}[1]{ \textbf{[#1]}}

\usepackage{enumerate}        
\newenvironment{roenumerate}{\begin{enumerate}[\upshape (i)]}{\end{enumerate}}

\newcommand{\iso}{\cong}
\newcommand{\comp}{\circ}

\newcommand\nc {\newcommand}
\newcommand\rnc{\renewcommand}

\newcount\blopone
\newcount\xone
\newcount\xtwo
\newcount\ytwo

\newtheorem{theorem}{Theorem}[subsection]
\newtheorem{thmInt}{Theorem}[section]

\newaliascnt{prop}{theorem}
\newtheorem{prop}[prop]{Proposition}
\aliascntresetthe{prop}
\newtheorem{com}[theorem]{Comment}
\newtheorem{apl}[theorem]{Application}
\newtheorem{exercise}[theorem]{Exercise}
\newtheorem{redu}[theorem]{Reduction}
\newtheorem{refinement}[theorem]{Refinement}
\newtheorem{summary}[theorem]{Summary}
\newtheorem{importnota}[theorem]{Important Notation}
\newtheorem{prblm}[theorem]{Problem}
\newaliascnt{notation}{theorem}
\newtheorem{notation}[notation]{Notation}
\aliascntresetthe{notation}
\newaliascnt{notationApp}{thmApp}

\aliascntresetthe{notationApp}
\newtheorem{explanation}[theorem]{Explanation}
\newaliascnt{defin}{theorem}
\newtheorem{defin}[defin]{Definition}
\aliascntresetthe{defin}
\newaliascnt{definApp}{thmApp}

\aliascntresetthe{definApp}
\newtheorem{caution}[theorem]{Caution}
\newaliascnt{remark}{theorem}
\newtheorem{remark}[remark]{Remark}
\aliascntresetthe{remark}
\newaliascnt{remarkApp}{thmApp}

\aliascntresetthe{remarkApp}
\newaliascnt{reminder}{theorem}
\newtheorem{reminder}[reminder]{Reminder}
\aliascntresetthe{reminder}
\newtheorem{illustration}[theorem]{Illustration}
\newtheorem{observation}[theorem]{Observation}
\newaliascnt{lemma}{theorem}
\newtheorem{lemma}[lemma]{Lemma}
\aliascntresetthe{lemma}
\newaliascnt{lemmaApp}{thmApp}

\aliascntresetthe{lemmaApp}
\newaliascnt{qn}{theorem}
\newtheorem{qn}[qn]{Question}
\aliascntresetthe{qn}
\newaliascnt{conjecture}{theorem}
\newtheorem{conjecture}[conjecture]{Conjecture}
\aliascntresetthe{conjecture}
\newtheorem{discussion}[theorem]{Discussion}
\newaliascnt{corollary}{theorem}
\newtheorem{corollary}[corollary]{Corollary}
\aliascntresetthe{corollary}
\newaliascnt{setting}{theorem}
\newtheorem{setting}[setting]{Setting}
\aliascntresetthe{setting}
\newaliascnt{construction}{theorem}
\newtheorem{construction}[construction]{Construction}
\aliascntresetthe{construction}
\newaliascnt{example}{theorem}
\newtheorem{example}[example]{Example}
\aliascntresetthe{example}
\newtheorem{conclusion}[theorem]{Conclusion}
\newtheorem{sketch}[theorem]{Sketch}
\newtheorem{triviality}[theorem]{Triviality}
\newtheorem{proto}[theorem]{Prototype Quasifibration}
\newtheorem{cauex}[theorem]{Cautionary Example}
\newaliascnt{hypo}{theorem}
\newtheorem{hypo}[hypo]{Hypothesis}
\aliascntresetthe{hypo}
\newtheorem{subth}{ }[theorem]
\newtheorem{case}{Case}[theorem]
\newtheorem{ssubth}{ }[subth]
\newtheorem{facts}[theorem]{Facts}
\newtheorem{history}[theorem]{Historical Survey}
\newtheorem{proofs}[theorem]{Discussion of the Proofs, Old and New}
\newtheorem{heuristic}[theorem]{Heuristic}

\newcommand{\st}{\,:\,} 
\nc\tri[1]{\begin{triviality}
\label{#1}}
\nc\fac[1]{\begin{facts}
\label{#1}
\begin{em}}
\nc\heu[1]{\begin{heuristic}
\label{#1}
\begin{em}}
\nc\app[1]{\begin{apl}
\label{#1}
\begin{em}}
\nc\skt[1]{\begin{sketch}
\label{#1}
\begin{em}}
\nc\hst[1]{\begin{history}
\label{#1}
\begin{em}}
\nc\pfs[1]{\begin{proofs}
\label{#1}
\begin{em}}
\nc\cas[1]{\begin{case}
\label{#1}
\begin{em}}
\nc\rfn[1]{\begin{refinement}
\label{#1}}
\nc\prt[1]{\begin{proto}
\label{#1}}
\nc\lem[1]{\begin{lemma}
\label{#1}}
\nc\pro[1]{\begin{prop}
\label{#1}}
\nc\thm[1]{\begin{theorem}
\label{#1}}
\nc\dis[1]{\begin{discussion}
\label{#1}
\begin{em}}
\nc\cor[1]{\begin{corollary}
\label{#1}}
\nc\dfn[1]{\begin{defin}
\label{#1}}
\nc\sthm[1]{\begin{subth}
\label{#1}}
\nc\exm[1]{\begin{example}
\label{#1}
\begin{em}}
\nc\obs[1]{\begin{observation}
\label{#1}
\begin{em}}
\nc\plm[1]{\begin{prblm}
\label{#1}
\begin{em}}
\nc\rmk[1]{\begin{remark}
\label{#1}
\begin{em}}
\nc\rmd[1]{\begin{reminder}
\label{#1}
\begin{em}}
\nc\ntn[1]{\begin{notation}
\label{#1}
\begin{em}}
\nc\exe[1]{\begin{exercise}
\label{#1}
\begin{em}}
\nc\xpl[1]{\begin{explanation}
\label{#1}
\begin{em}}
\nc\smr[1]{\begin{summary}
\label{#1}
\begin{em}}
\nc\cau[1]{\begin{caution}
\label{#1}
\begin{em}}
\nc\hyp[1]{\begin{hypo}
\label{#1}}
\nc\imn[1]{\begin{importnota}
\label{#1}
\begin{em}}
\nc\rdn[1]{\begin{redu}
\label{#1}
\begin{em}}
\nc\cax[1]{\begin{cauex}
\label{#1}
\begin{em}}
\nc\cmt[1]{\begin{com}
\label{#1}
\begin{em}}
\nc\con[1]{\begin{construction}
\label{#1}
\begin{em}}
\nc\cnj[1]{\begin{conjecture}
\label{#1}
\begin{em}}
\nc\ill[1]{\begin{illustration}
\label{#1}
\begin{em}}
\nc\SSSthm[1]{\begin{ssubth}
\label{#1}
\begin{em}}
\nc\cnc[1]{\begin{conclusion}
\label{#1}
\begin{em}}

\nc\elem{\end{lemma}}
\nc\erdn{\end{em}\end{redu}}
\nc\erfn{\end{refinement}}
\nc\eprt{\end{proto}}
\nc\ethm{\end{theorem}}
\nc\ecor{\end{corollary}}
\nc\edfn{\end{defin}}
\nc\esthm{\end{subth}}
\nc\epro{\end{prop}}
\nc\etri{\end{triviality}}
\nc\eexm{\end{em}
\end{example}}
\nc\eobs{\end{em}
\end{observation}}
\nc\ecmt{\end{em}
\end{com}}
\nc\efac{\end{em}
\end{facts}}
\nc\eheu{\end{em}
\end{heuristic}}
\nc\eapp{\end{em}
\end{apl}}
\nc\ermk{\end{em}
\end{remark}}
\nc\ermd{\end{em}
\end{reminder}}
\nc\eill{\end{em}
\end{illustration}}
\nc\eplm{\end{em}
\end{prblm}}
\nc\ecas{\end{em}
\end{case}}
\nc\eskt{\end{em}
\end{sketch}}
\nc\ecau{\end{em}
\end{caution}}
\nc\ecax{\end{em}
\end{cauex}}
\nc\eimn{\end{em}
\end{importnota}}
\nc\entn{\end{em}
\end{notation}}
\nc\eexe{\end{em}
\end{exercise}}
\nc\expl{\end{em}
\end{explanation}}
\nc\edis{\end{em}
\end{discussion}}
\nc\econ{\end{em}
\end{construction}}
\nc\ecnj{\end{em}
\end{conjecture}}
\nc\esmr{\end{em}
\end{summary}}
\nc\ehst{\end{em}
\end{history}}
\nc\epfs{\end{em}
\end{proofs}}
\nc\ehyp{
\end{hypo}}
\nc\ecnc{\end{em}
\end{conclusion}}
\nc\essthm{\end{em}
\end{ssubth}}

\nc\SSSt{\scriptstyle}
\newcommand{\comment}[1]{}
\newcommand{\nn}{{\mathbb N}}
\newcommand{\NN}{\mathbb{N}}
\newcommand{\ZZ}{\mathbb{Z}}

\newcommand{\K}{{\mathbf K}}

\newcommand{\AAA}{{\mathbf A}}
\newcommand{\BB}{{\mathbf B}}

\newcommand{\DD}{{\mathbf D}}
\newcommand{\TT}{{\mathbf T}}
\newcommand{\SSS}{{\mathbf S}}
\newcommand{\RR}{{\mathbf R}}
\newcommand{\UU}{{\mathbf U}}

\newcommand{\ydg}{\fY_{\mathrm{dg}}}

\newcommand{\D}{{\mathbf D}}
\newcommand{\Map}{\mathrm{Map}}
\newcommand{\Cdg}{\mathbf{C}_{\mathrm{dg}}}
\newcommand{\Aut}{\mathrm{Aut}}
\newcommand{\AutL}{\mathrm{Aut}_\infty}

\newcommand{\Iso}{\mathrm{Iso}}

\newcommand{\longhookrightarrow}{\lhook\joinrel\longrightarrow}
\nc\op{^{\hbox{\rm\tiny op}}}
\nc\mth{^{\hbox{\rm\tiny th}}}
\nc\Enh{\mathbf{Enh}}
\nc\Enhstr{\mathbf{Enh}^\text{str}}

\nc\script{\mathscr}
\nc\ca{{\script A}}
\nc\cb{{\script B}}
\nc\cc{{\script C}}
\nc\cd{{\script D}}
\nc\ce{{\script E}}
\nc\cf{{\script F}}
\nc\cg{{\script G}}
\nc\ch{{\script H}}
\nc\ci{{\script I}}
\nc\cj{{\script J}}
\nc\ck{{\script K}}
\nc\cl{{\script L}}
\nc\cm{{\script M}}
\nc\cn{{\script N}}
\nc\co{{\script O}}
\nc\cp{{\script P}}
\nc\cq{{\script Q}}
\nc\cs{{\script S}}
\nc\ct{{\script T}}
\nc\cu{{\script U}}
\nc\cv{{\script V}}
\nc\cx{{\script X}}
\nc\cy{{\script Y}}
\nc\cz{{\script Z}}
\nc\clim{{\ds\mathop{\rm lim}_{\ds\longleftarrow}}\,}
\nc\climi{\clim_{\!i}\,}
\nc\climn{\clim^{\!n}\,}
\nc\colim{{\ds\mathop{\rm colim}_{\ds\la}}}
\nc\colimj{{\ds\mathop{\rm colim}_{\ds\la}}{}_{j\,}}
\nc\loc[1]{{\text{\rm Loc(#1)}}}
\nc\coloc[1]{{\text{\rm Coloc}(#1)}}

\newcommand{\sh}[2][]{\Sigma^{#1}#2}

\nc\prf{\begin{proof}}
\nc\eprf{\end{proof}}
\nc\ds{\displaystyle}

\nc\be{\begin{roenumerate}}
\nc\ee{\end{roenumerate}}

\nc\sub{\qquad\subset\qquad}
\nc\ctr[1]{{\left.\ct\left(-,#1\right)\right|}_{\cs}}
\nc\ctrf[2]{{\left.\ct\left(#1,#2\right)\right|}_{\cs}}
\nc\Ctr[1]{{\left.\ct\left(-,#1\right)\right|}_{\ct^\alpha}}
\nc\Ctrf[2]{{\left.\ct\left(#1,#2\right)\right|}_{\ct^\alpha}}

\nc\la{\longrightarrow}
\nc\nin{\noindent}
\nc\cad[1]{\text{card}(#1)}
\nc\eq{\quad=\quad}
\nc\BA{\begin{array}{c}}
\nc\EA{\end{array}}
\nc\barr{
\[
\begin{array}{cccccccccccccccc}
}
\nc\earr{
\end{array}
\]
}
\nc\as[1]{{\langle S\rangle}^{#1}}
\nc\shi{\text{\it shift}}

\nc\Hom{{\mathop{\rm Hom}}}
\nc\HHom{{\script H}{\mathop{\rm om}}}
\nc\End{{\mathop{\rm End}}}
\nc\Ext{{\mathop{\rm Ext}}}
\nc\e{\varepsilon}
\nc\p{\varphi}

\nc\y[1]{\mathbf{y}#1}
\nc\x[1]{\mathbf{z}#1}
\rnc\mod[1]{\ensuremath{\mathop{#1\textup{--mod}}}\xspace}
\nc\mmod[1]{\text{{\rm mod}--}#1}
\nc\Mod[1]{#1\text{--{\rm Mod}}}
\nc\MMod[1]{\text{{\rm Mod}--}#1}
\nc\Proj[1]{\text{{\rm Proj}--}#1}
\nc\proj[1]{\text{{\rm proj}--}#1}
\nc\Md{\mathbf{Mod}}
\nc\ov{\overline}
\nc\wt{\widetilde}
\nc\wh{\widehat}
\nc\ph{\varphi}
\nc\tstr{{\it t}--structure}
\nc\tstrs{{\it t}--structure }
\nc\spec[1]{{\text{\rm Spec}\left(#1\right)}}
\nc\gen[2]{{\langle#1\rangle}^{}_{#2}}
\nc\genu[3]{{\langle#1\rangle}^{[#3]}_{#2}}
\nc\ogen[1]{\ov{\langle#1\rangle}}
\nc\ogenun[2]{\ov{\langle#1\rangle}_{#2}^{}}
\nc\ogenu[3]{\ov{\langle#1\rangle}^{[#3]}_{#2}}
\nc\ogenul[3]{\ov{\langle#1\rangle}^{[-\infty,#3]}_{#2}}
\nc\ogenuf[3]{\ov{\langle#1\rangle}^{[#3,\infty]}_{#2}}
\nc\genuf[3]{{\langle#1\rangle}^{[#3,\infty]}_{#2}}
\nc\genul[3]{{\langle#1\rangle}^{[-\infty,#3]}_{#2}}
\nc\dperf[1]{\D^{\mathrm{perf}}(#1)}
\nc\dbcoh{\mathbf{D}^b_{\mathrm{coh}}}
\nc\dbcohs[1]{\mathbf{D}^b_{\mathrm{coh},#1}}
\nc\dmcoh{\mathbf{D}^-_{\mathrm{coh}}}
\nc\dmcohs[1]{\mathbf{D}^-_{\mathrm{coh},#1}}
\newcommand{\Dqc}{{\mathbf D_{\text{\bf qc}}}}
\newcommand{\Dqcmi}{{\mathbf D_{\text{\bf qc}}^-}}
\newcommand{\Dqcpl}{{\mathbf D_{\text{\bf qc}}^+}}
\newcommand{\Dqcb}{{\mathbf D_{\text{\bf qc}}^b}}
\newcommand{\Dqcp}{{\mathbf D_{\text{\bf qc}}^p}}
\newcommand{\Dqcpb}{{\mathbf D_{\text{\bf qc}}^{p,b}}}
\newcommand{\Dqcmis}[1]{{\mathbf D_{\text{\bf qc},#1}^-}}
\newcommand{\Dqcpls}[1]{{\mathbf D_{\text{\bf qc},#1}^+}}
\newcommand{\Dqcbs}[1]{{\mathbf D_{\text{\bf qc},#1}^b}}
\newcommand{\Dqcps}[1]{{\mathbf D_{\text{\bf qc},#1}^p}}
\newcommand{\Dqcpbs}[1]{{\mathbf D_{\text{\bf qc},#1}^{p,b}}}
\nc\RHHom{{\script{RH}}{\mathrm{om}}}
\nc\Coprod{\mathrm{Coprod}}
\nc\COprod{\mathrm{coprod}}
\nc\add{\mathrm{add}}
\nc\Add{\mathrm{Add}}
\nc\Smr{\mathrm{smd}}
\nc\id{\mathrm{id}}
\nc\LL{\mathbf{L}}
\nc\R{\mathbf{R}}
\nc\wi{\wt{\text{\it\i}}}
\nc\exal{\ce\text{\it x}(\ct^\alpha,\ab)}
\nc\exalz{\ce\text{\it x}_{\aleph_0}^{}(\ct^\alpha,\ab)}
\nc\fc{\mathfrak{C}}
\nc\fl{\mathfrak{L}}
\nc\fs{\mathfrak{S}}
\nc\Prf{\text{\bf Perf}}
\nc\Enhq{\mathbf{Enh}_?^{}}
\nc\Enhqstr{\mathbf{Enh}_?^\text{str}}
\nc\Enhdg{\mathbf{Enh}_{\mathbf{dg}}^{}}
\nc\Enhstrdg{\mathbf{Enh}_{\mathbf{dg}}^\text{str}}
\nc\Enhin{\mathbf{Enh}_\infty^{}}
\nc\EnhGL{\mathbf{Enh}_{?}^{}}
\nc\Tri{\mathbf{Tri}}
\nc\fgt{\mathsf{Fgt}}
\newcommand{\Dqcs}[1]{{\mathbf D_{\text{\bf qc},#1}}}
\nc\coh{\mathbf{Coh}}
\nc\qc{\mathbf{Qcoh}}
\nc\vect{\mathbf{Vect}}
\nc\dperfs[2]{\D_{#1}^{\mathrm{perf}}(#2)}
\nc\SB{^{\mathrm{sb}}}
\nc\SBb{^{\mathrm{sb},b}}
\nc\tsb{\ct\SB}
\nc\tsbb{\ct\SBb}
\nc\climone{{\ds{\mathop{\rm lim}_{\ds\longleftarrow}}}^1\,}

\newcommand{\fun}[1]{\mathsf{#1}}

\newcommand{\fF}{\fun{F}}
\newcommand{\fG}{\fun{G}}

\newcommand{\fY}{\fun{Y}}
\newcommand{\kK}{\mathbb{K}}

\newcommand{\Dqca}{{\mathbf D_{\text{\bf qc}}^\star}}

\newcommand{\scat}[1]{{\mathbf{#1}}} 
\newcommand{\dgCat}{\scat{dgCat}} 
\newcommand{\Hqe}{\scat{Hqe}} 
\newcommand{\dgMod}[1]{\mathbf{dgMod}\text{--}#1}
\newcommand{\opp}{^{\circ}} 

\newcommand{\PrCat}{\mathbf{PrCat}}
\newcommand{\stable}{\mathrm{st}}
\newcommand{\Dloc}{\mathbf{D}^\mathrm{loc}}

\nc\hoco{
\begin{picture}(40,10)
\put(20,0){\makebox(0,0)[b]{\text{\rm Hocolim}}}
\put(5,-2){\vector(1,0){30}}
\end{picture}\,\,}

\renewcommand{\ge}{\geqslant}
\renewcommand{\leq}{\leqslant}
\renewcommand{\geq}{\geqslant}
\nc\tst[1]{\left({#1}^{\leq0},{#1}^{\geq1}\right)}
\nc\tstv[2]{\left({#1}_{#2}^{\leq0},{#1}_{#2}^{\geq1}\right)}
\nc\tsth[2]{{#1}_{#2}^{\heartsuit}}

\nc\holim{
\begin{picture}(40,10)
\put(20,0){\makebox(0,0)[b]{\text{\rm Holim}}}
\put(35,-2){\vector(-1,0){30}}
\end{picture}}

\newcommand{\lto}{\longrightarrow}
\newcommand{\OO}{\mathcal{O}}
\newcommand{\ho}[1]{\mathrm{Ho}(#1)} 
\newcommand{\Yon}[1][]{\fY_{#1}}
\newcommand{\YonInf}{\fY_{\infty}}
\newcommand{\YYon}[1][]{\widetilde\fY_{#1}}
\newcommand{\YYonInf}{\widetilde\fY_{\infty}}
\newcommand{\Hocolim}{\mathrm{Hocolim}}
\newcommand{\IndC}{\mathrm{Ind}_?}
\newcommand{\IndNL}{\mathrm{Ind}}
\newcommand{\IndL}{\mathrm{Ind}_\kK}
\newcommand{\ResNL}{\mathrm{Res}}
\newcommand{\ResL}{\mathrm{Res}_\kK}

\newcommand{\Res}{\fun{Res}}
\newcommand{\Comp}{\fun{Comp}}

\newcommand{\hproj}[1]{{\mathbf{h}\text{-}\mathbf{proj}}\text{--}#1} 

\newcommand{\CatI}{\mathbf{Cat}}
\newcommand{\Hc}{\mathrm{H}^0}
\newcommand{\StabInfNL}{{\CatI}_\infty^{\mathrm{st}}}
\newcommand{\StabInf}{{\CatI}_{\infty,\kK}^{\mathrm{st}}}
\newcommand{\StabInfA}{{\CatI}_{\infty,?}^{\mathrm{st}}}
\newcommand{\HoStabInfNL}{\mathrm{Ho}\left(\StabInfNL\right)}
\newcommand{\HoStabInf}{\mathrm{Ho}\left(\StabInf\right)}
\newcommand{\HoStabInfA}{\mathrm{Ho}\left(\StabInfA\right)}
\nc\EnhNL{\mathbf{Enh}}
\nc\EnhL{\mathbf{Enh}_{\kK}}
\nc\EnhA{\mathbf{Enh}_{?}}
\nc\EnhStrNL{\mathbf{Enh}^\mathrm{str}}
\nc\EnhStrL{\mathbf{Enh}_{\kK}^\mathrm{str}}
\nc\EnhStrA{\mathbf{Enh}_{?}^\mathrm{str}}

\newcommand{\Ho}[1]{\mathrm{Ho}(#1)} 
\newcommand{\Spe}{\mathbf{Sp}} 
\newcommand{\rest}[1]{|_{#1}}

\begin{document}

\author{Alberto Canonaco, Amnon Neeman, and Paolo Stellari}

\address{A.C.: Dipartimento di Matematica ``F.\ Casorati''\\
        Universit{\`a} degli Studi di Pavia\\
        Via Ferrata 5\\
        27100 Pavia\\
        ITALY}
\email{alberto.canonaco@unipv.it}

\address{A.N.: Dipartimento di Matematica ``F.\ Enriques''\\
        Universit{\`a} degli Studi di Milano\\
        Via Cesare Saldini 50\\
	20133 Milano\\
        ITALY}
\email{amnon.neeman@unimi.it}

\address{P.S.: Dipartimento di Matematica ``F.\ Enriques''\\
        Universit{\`a} degli Studi di Milano\\
        Via Cesare Saldini 50\\
	20133 Milano\\
        ITALY}
\email{paolo.stellari@unimi.it}
\urladdr{\url{https://sites.unimi.it/stellari}}

 \thanks{A.~C.~is a member of GNSAGA (INdAM) and was partially supported by the research project PRIN 2022 ``Moduli spaces and special varieties''. A.~N.~was partly supported 
   by Australian Research Council Grants DP200102537 and DP210103397,
   and by ERC Advanced Grant 101095900-TriCatApp.
    	P.~S.~was partially supported by the ERC Consolidator Grant ERC-2017-CoG-771507-StabCondEn, by the research project PRIN 2022 ``Moduli spaces and special varieties'', and by the research project FARE 2018 HighCaSt (grant number R18YA3ESPJ)}

 \title[Metrics and enhancements]
       {Metrics on triangulated categories and their enhancements}

\begin{abstract}
In this paper we investigate the uniqueness of enhancements of the natural subcategories of weakly approximable triangulated categories. The main idea is to enhance at the level of $\infty$-categories the recently developed theory of excellent metrics. The applications of our results include a vast generalization of the known results about the (strong) uniqueness of enhancements in the linear and nonlinear setting, providing positive answers to some open questions. In addition we prove that, under some natural assumptions, the equivalences between such subcategories can be lifted through their natural inclusions. This completes the picture started in our previous paper \cite{Canonaco-Neeman-Stellari24} and extends the known results about Margolis Uniqueness Conjecture for the homotopy category of spectra.
\end{abstract}

\subjclass[2020]{Primary 18G80, secondary 14F08, 18N40, 18N60}

\keywords{Triangulated categories,enhancements}

\maketitle

\setcounter{tocdepth}{1}
\tableofcontents

\setcounter{section}{0}

\section*{Introduction}
\label{S973}

It is well-known that,
in the world of triangulated categories,
the mapping cone of a morphism is not
canonically unique or functorial.
And for certain constructions this creates
headaches---let us spare the reader the
customary
polemics surrounding this, and a
recounting of problems that arise. The issue
has by now been well documented in the
literature. Suffice it to say that the experts
have come to agree that there is great value in
studying the so-called ``enhancements'' of
triangulated categories. See
\autoref{def:algtria}, and more generally
\autoref{subsec:generalmod} and
\autoref{subsec:uniqenh} for the basics about
enhancements and their possible uniqueness.

In this paper we focus on two concrete
problems, in which we feel free to use either
triangulated categories or their enhanced versions,
as the need arises.
For the moment we formulate the problems
vaguely:
\begin{enumerate}
\item[(a)] How do we canonically detect special triangulated subcategories of a given one?
\item[(b)] How do we reconstruct larger triangulated (sub)categories out of a given smaller one?
\end{enumerate}

Let us illustrate this with a well-known, classical example. Let $\D(\MMod{R})$ be the derived category of complexes of right modules over a ring $R$, and let $\dperf{R}$ be the full triangulated subcategory of $\D(\MMod{R})$ of perfect complexes. Recall that the category $\dperf{R}$ is triangle-equivalent to the homotopy category $\K^b(\proj R)$ of bounded complexes of finitely generated projective $R$-modules. More accurately, the natural functor $\K^b(\proj R)\la\D(\MMod{R})$ is fully faithful, and the essential image is $\dperf{R}$. And Question (a) has a simple answer in this case: $\dperf{R}$ coincides with the full, triangulated subcategory of compact objects in $\D(\MMod{R})$. Compact objects will be formally introduced in
\autoref{definitionofcompacts}, for now it suffices to note that an object $C\in\D(\MMod{R})$ is compact if $\Hom(C,-)$ behaves well, meaning it respects certain colimits. Thus we have moved from a purely algebraic description of $\dperf{R}\iso\K^b(\proj R)$, in terms of projective modules, to a more categorical presentation which is intrinsic in nature. More concretely, the categorical description makes it easy to see that if $S$ is another ring, then any equivalence $\D(\MMod{R})\iso\D(\MMod{S})$ restricts to an equivalence of subcategories
$\dperf{R}\iso\dperf{S}$.

Somewhat surprisingly question (b) can be recast, and the reformulated problem has a solution in this specific situation. The recasting requires us to employ enhancements. Let us view $\D(\MMod R)$ not only as a triangulated category, but as the homotopy category of a stable $\infty$-category $\TT$. In the language of
\autoref{def:algtria}, we choose an enhancement
$\TT$ of the triangulated category $\D(\MMod R)$.
Then the triangulated subcategory $\dperf{R}$, of
the triangulated category $\D(\MMod R)\iso\Hc(\TT)$,
defines for us a natural stable $\infty$-subcategory $\SSS\subset\TT$, with $\Hc(\SSS)\iso\dperf{R}$. And a classical observation shows that $\TT\iso\IndNL(\SSS)$, meaning $\TT$ is the Ind-completion
of $\SSS$. All in all, not only is there a recipe constructing from $\D(\MMod{R})$ the subcategory $\dperf{R}$ , but there is also a recipe going back, which out of $\dperf{R}$ reconstructs $\D(\MMod{R})$. The issue is that the process going back appeals to enhancements. 

So far we have treated classical, well-known results.
And now the time has come for the vast generalizations
and improvements addressed in the current paper.

Still staying with the example of the triangulated category $\D(\MMod{R})$: it carries additional structure, namely a \tstr. And this allows us to define plenty of new triangulated subcategories, for example $\D^\star(\MMod{R})$, with $\star=b,+,-$. And when $R$ is a right coherent ring we can also define $\D^b(\mmod{R})$, the bounded derived category of finitely presented $R$-modules. And when we ask (a) and (b) for these new categories, we run into much deeper and more interesting problems.

The first main point of this paper, and of the precursor \cite{Canonaco-Neeman-Stellari24}, is that these problems can be vastly generalized. They can be formulated in broad generality in terms of the theory of weakly approximable triangulated categories of \autoref{def:wa}, developed by the second author during the last few years. There will be a review of the results we need in \autoref{sec:wa}. The short summary is that a triangulated category $\ct$, which is closed under small coproducts, is \emph{weakly approximable} if it has a compact generator $G$ and a \tstrs $\tst\ct$ satisfying a list of technical hypotheses. The easy ones to state are
that
$G\in\ct^-$ is bounded above, and there are no
nonzero morphisms from $G$ to sufficiently positive shifts of $G$. The more technical condition, for which we do not yet have the notation, asserts roughly that any object in
the heart $\tsth\ct{}$ admits a surjection from
an object in the heart of the form $E^{\geq0}$, with
$E\in\ct^{\leq0}$ generated by $G$ in bounded effort.
The definition of ``bounded effort'' is the tricky part, it will be reviewed at the beginning of
\autoref{subsec:gen}.

If $\ct$ is a weakly approximable triangulated category, then it comes with a given \tstr, and in terms of this \tstrs we can consider the objects which are bounded above, bounded below or bounded on both sides. The full subcategories of these objects are, respectively, the thick triangulated subcategories $\ct^-$, $\ct^+$ and $\ct^b$. In addition we have the full triangulated subcategory $\ct^c$ of compact objects and, by taking approximation by compact objects, we can define the full subcategory $\ct^b_c$ of bounded pseudo-compact objects. The \tstrs $\tst\ct$ is also crucial in the formation
of the auxiliary triangulated subcategory $\tsb$ of \autoref{D0.7}, which will remain mysterious for now but will play an important role in the body of the paper.

These subcategories will all be properly defined in \autoref{subsec:wadef} and \autoref{subsec:waTsb}. Leaving aside the formal definition for now, we can get a grasp on them by considering the example of the geometric setting. If $X$ is a noetherian scheme and $\ct=\Dqc(X)$ is the derived category of complexes of $\OO_X$-modules with quasi-coherent cohomology, then $\ct$ is weakly approximable and the subcategories
$\ct^-$, $\ct^+$, $\ct^b$, $\ct^c$ and $\ct^b_c$ above turn out to be $\Dqcmi(X)$, $\Dqcpl(X)$, $\Dqcb(X)$, $\dperf{X}$ and $\D^b(\coh(X))$.

If we add to the list the full triangulated subcategory $\ct^-_c$ of pseudo-coherent objects, as well as $\tsbb:=\tsb\cap\ct^b$ and $\ct^{c,b}:=\ct^c\cap\ct^b$, we obtain the following diagram of natural inclusions:
\begin{equation}\label{eq:incl}
\xymatrix@C+40pt@R-5pt{
&&&\ct&\\
\tsb\ar@{^{(}->}[r]&\ct^-\ar@{^{(}->}[urr]&&\ct^b\ar@{^{(}->}[u]\ar@{_{(}->}[ll]\ar@{^{(}->}[r]&\ct^+\ar@{_{(}->}[ul]\\
&&\tsbb\ar@/_1pc/@{^{(}->}[ur]\ar@/^1pc/@{_{(}->}[ull]&&\\
&\ct^-_c\ar@{^{(}->}[uu]&&\ct^b_c\ar@{_{(}->}[ll]\ar@{^{(}->}[uu]&\\
&\ct^c\ar@{^{(}->}[u]\ar@{_{(}->}[uuul]&&\ct^{c,b},\ar@{^{(}.>}[u]^-{(\ast)}\ar@{_{(}->}[ll]
 \ar@/_4pc/@{^{(}->}[uuu]\ar@/^1pc/@{_{(}.>}[luu]^-{(\diamond)}&
}
\end{equation}
for any weakly approximable triangulated category $\ct$.

\subsection*{The results}

Let us now illustrate the more general versions of our main results and let us postpone a little the discussion about their applications.

The first result is about question (a) above and it allows us to intrinsically recover triangulated subcategories of a given one in \eqref{eq:incl}.

\begin{thmInt}[\autoref{thm:passage}]\label{thm:main1}
If $\ct$ is a weakly approximable triangulated category, then all the inclusions in diagram \eqref{eq:incl}, represented by the solid arrows $\ca\hookrightarrow\cb$, are \emph{invariant under triangle equivalences}. By this we mean that, given a pair of weakly approximable triangulated
categories $\ct,\ct'$, as well as matching inclusions
$\ca\hookrightarrow\cb\hookrightarrow\ct$ and
$\ca'\hookrightarrow\cb'\hookrightarrow\ct'$ from diagram \eqref{eq:incl},
then any triangle equivalence $\cb\la\cb'$ must restrict to a triangle equivalence $\ca\la\ca'$.

The same is true for the inclusion $(\ast)$ in the diagram \eqref{eq:incl}, provided we further assume that one of the two conditions below holds:
\be
\item $\ct,\ct'$ are coherent or
\item $\ct^c\subset\ct^b_c$, ${\ct'}^c\subset{\ct'}^b_c$ and
$^\perp(\ct^b_c)\cap\ct^-_c=\{0\}={^\perp({\ct'}^b_c)}\cap{\ct'}^-_c$.
\ee

Finally, the inclusion $(\diamond)$ is invariant under triangle equivalences if $\ct^c\subset\ct^b_c$ and ${\ct'}^c\subset{\ct'}^b_c$.
\end{thmInt}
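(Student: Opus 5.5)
The plan is to reduce everything to intrinsic characterizations. For each solid inclusion $\ca\hookrightarrow\cb$ in \eqref{eq:incl}, I would describe $\ca$ as a full subcategory of $\cb$ using only the triangulated structure of $\cb$. Any triangle equivalence $\cb\la\cb'$ then carries $\ca$ onto the subcategory of $\cb'$ defined by the same recipe, and that subcategory is $\ca'$.

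The natural tool is Neeman's theory of metrics and good metrics on triangulated categories. For a weakly approximable $\ct$, each of $\ct^c$, $\ct^-$, $\ct^b$ and $\ct^-_c$ carries a metric whose equivalence class is intrinsic. Concretely, the sequence $\ct^{\leq -n}$ is determined up to equivalence by $\ct^c$, or by $\ct^-_c$, because the preferred equivalence class of \tstr s contains $\tst\ct$ and is intrinsically defined through the compact generator. The known completion theorems then express the other subcategories as $\mathfrak L$, $\mathfrak S$, or the "bounded" parts of these completions.

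I would go through the arrows in groups. The inclusions into $\ct$ are the easy cases. $\ct^c$ is the subcategory of compact objects. $\ct^-$, $\ct^+$ and $\ct^b$ are unions of the aisles, or co-aisles, of \tstr s in the preferred equivalence class. Since the preferred class is preserved by equivalences, these inclusions are invariant.

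The other arrows need an intrinsic description of each subcategory inside the next one up:
\begin{itemize}
\item $\ct^c\subset\ct^-_c$: use the results on $\ct^-_c$ as the completion $\mathfrak S$-type closure of $\ct^c$. Then $\ct^c$ is recovered inside $\ct^-_c$ as the objects that are compact relative to the coproducts existing in $\ct^-_c$, or as the $\mathfrak L$-closure data.
\item $\ct^-_c\subset\ct^-$: $\ct^-_c$ consists of the objects of $\ct^-$ which are "$\ct^{\leq -n}$-approximable by compacts" for every $n$. I would rephrase this via homological functors on $\ct^-$ that are locally finite.
\item $\ct^b_c\subset\ct^-_c$ and $\ct^b\subset\ct^-$: $\ct^b$ is the intersection of $\ct^-$ with $\ct^+$. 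Inside $\ct^-$ alone, $\ct^b$ is characterized as the objects $F$ with $\Hom(\ct^{\leq -n},F)=0$ for some $n$, where the sequence $\ct^{\leq -n}$ is intrinsic in $\ct^-$ up to equivalence by the good metric.
\item $\tsb$ and $\tsbb$: use the definition in \autoref{D0.7} as the left orthogonal to a suitable subcategory, which is again intrinsic once the metric is.
\end{itemize}

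The main obstacle is the dotted arrows. These are the only places where $\ct^c$ or $\ct^{c,b}$ must be recovered from a smaller category, without the ambient coproducts or the full $\ct^-$.

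For $(\ast)$ I would characterize $\ct^{c,b}$ inside $\ct^b_c$ via the Serre/Gorenstein-type statement. In case (i) (coherent), the objects of $\ct^b_c$ lying in $\ct^c$ are those $F$ for which $\Hom(F,-)$ restricted to $\ct^b_c$ vanishes on $\ct^{\leq -n}\cap\ct^b_c$ for some $n$. Coherence guarantees that the heart is generated well enough for the converse to hold. In case (ii), I would instead argue via the orthogonality hypothesis. Given a candidate $F$, approximate it by a compact $C$. The cone lands in $^\perp(\ct^b_c)\cap\ct^-_c$ up to bounded error, and the hypothesis forces it to vanish.

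For $(\diamond)$, the assumption $\ct^c\subset\ct^b_c$ gives $\ct^{c,b}=\ct^c$. So it suffices to characterize $\ct^c$ inside $\ct^-_c$, which is the solid arrow already treated.

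I expect the delicate step to be (ii) of $(\ast)$. The key check there is that the approximating triangle stays inside $\ct^-_c$, so that the orthogonality hypothesis actually applies.
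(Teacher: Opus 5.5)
\textbf{There is a genuine gap, and it lies in the part of the statement that is new relative to \cite[Theorem~B]{Canonaco-Neeman-Stellari24}.} Your handling of $\tsb\hookrightarrow\ct^-$ agrees with the paper (\autoref{rmK:canonTsb}). However, you have misread $(\diamond)$: in \eqref{eq:incl} it goes from $\ct^{c,b}$ to $\tsbb$, not to $\ct^-_c$.

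Under $\ct^c\subset\ct^b_c$ one has $\ct^{c,b}=\ct^c$, and also $\tsbb=\tsb$, since $G\in\ct^b$ forces $\ogenu G{}{-n,n}\subset\ct^b$ (\autoref{rmk:TsbTb}). So $(\diamond)$ is really the solid arrow $\ct^c\hookrightarrow\tsb$, which your list never treats. This is also a place where $\ct^c$ must be recovered without the ambient coproducts, contrary to your claim that only the dotted arrows are of this kind.

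The orthogonality tricks you use elsewhere cannot handle it. Every object of $\tsb$ is by definition left orthogonal to some $\ct^{\leq-n}$. The criterion $\tsb\cap\ct^-_c=\ct^c$ needs $\ct^-_c$, which is invisible from inside $\tsb$.

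The missing idea is the one you float, for the wrong category, in your treatment of $\ct^c\subset\ct^-_c$. \autoref{C0.15} shows $(\tsb)^c=\ct^c$, with compactness measured against the coproducts that exist in $\tsb$. The proof has two steps:
\begin{itemize}
\item \autoref{L0.11}: any coproduct existing in $\tsb$ is a coproduct in $\ct$, with all its terms in a single $\ogenu G{}{-n,n}$.
\item \autoref{L0.13}: for $C\in(\tsb)^c$, every map $C\to Y$ with $Y\in\ogenu G{}{-n,n}$ factors through an object of $\genu G{}{-n,n}$. This is proved by checking that the class of such $Y$ contains $G[-n,n]$ and is closed under coproducts (using compactness of $C$ in $\tsb$), under summands, and under extensions (via \cite[Lemma~1.6 and Remark~1.7]{Neeman17}).
\end{itemize}
Applying the factorization to $\id_C$ makes $C$ a summand of an object of $\genu G{}{-n,n}\subset\ct^c$.

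Without this, your treatment of $\tsbb\hookrightarrow\tsb$ is unsupported as well. Note that $\tsbb=\tsb\cap\ct^+$ is a \emph{right} orthogonality condition, not a left one. Both the paper's formula $\tsbb=\bigcup_n\big(\tsb\cap G[-\infty,-n]^\perp\big)$ and the characteristic nature of the metric $\tsb\cap\ct^{\leq-i}$ (\autoref{lem:charmetric}) rest on $\ct^c$ being intrinsic in $\tsb$.

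\textbf{Two smaller points.} First, for $\tsbb\hookrightarrow\ct^b$: inside $\ct^b$ you can only test left orthogonality against $\ct^b\cap\ct^{\leq-n}$. So you need ${}^\perp(\ct^b\cap\ct^{\leq-n})\subset{}^\perp\ct^{\leq-n-1}$. This is \autoref{lem:anotherformula}, which uses left-completeness of $\ct$: write $D\in\ct^{\leq-n-1}$ as a homotopy limit of its truncations $D^{\geq-\ell}\in\ct^b$. It must then be combined with the intrinsic nature of $\ct^b\cap\ct^{\leq-n}$ in $\ct^b$, from \cite[Proposition~6.1.8]{Canonaco-Neeman-Stellari24}.

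Second, your case (ii) argument for $(\ast)$ is off target. When $\ct^c\subset\ct^b_c$, the cone of a compact approximation $E\to F$ of $F\in\ct^b_c$ lies in $\ct^b_c\cap\ct^{\leq-m}$, not in ${}^\perp(\ct^b_c)$. Any $F\in\ct^b_c$ left orthogonal to some $\ct^b_c\cap\ct^{\leq-n}$ is then trivially compact. So your sketch does not locate where the hypothesis ${}^\perp(\ct^b_c)\cap\ct^-_c=\{0\}$ is actually needed.

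For comparison, the paper does not reprove $(\ast)$ or any of the older arrows. It takes them from \cite[Theorem~B]{Canonaco-Neeman-Stellari24} and proves only the arrows involving $\tsb$ and $\tsbb$.
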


The technical definition of coherent weakly approximable triangulated categories is as in \cite[Definition~\ref*{D29.1}]{Neeman18A}, and for the reader's convenience it will be recalled in \autoref{def:cohtria}. For now it suffices to say that it asserts the existence, for every pseudo-compact object, of a triangle with properties somewhat less stringent than those of the truncation triangle with respect to a \tstr. It is worth pointing out that if $R$ is a right coherent ring, then $\D(\MMod{R})$ is coherent.

Most of \autoref{thm:main1} was proved in \cite{Canonaco-Neeman-Stellari24}. The present statement is an improvement in that it also includes the two new subcategories, $\tsb$ and $\tsbb$, which play an important role in this paper. Some major applications of \autoref{thm:main1}, including a vast generalization of \cite{Rickard89b} have been discussed in \cite{Canonaco-Neeman-Stellari24} and then surveyed in \cite{CNS25}. The focus of the current paper is on the relevance of \autoref{thm:main1} to providing an answer to (b).

\medskip

As we move towards answering (b), we need to make more precise what it means to reconstruct a larger triangulated category $\cb$ out of a smaller one $\ca\subset\cb$. The algebraic example we discussed, in the two paragraphs immediately following (a) and (b) above, illustrates that we understand how to perform (b) only in the presence of enhancements. It has long been conjectured that, at least in some cases, there should be an enhancement-free way to do this; we will recall one venerable, old example in \autoref{conj:margolis}. Four and a half decades after this conjecture was formulated, it remains wide open. In recent years there has been some potential progress, with the discovery of some enhancement-free algorithms, constructing new triangulated categories out of old ones. But so far none of these apply to \autoref{conj:margolis}. For the purpose of the current article we take a different tack; we prove that in many interesting cases the category $\ca$ has a unique enhancement and then, using this unique enhancement, we reconstruct $\cb$.
Assume therefore that both $\ca$ and $\cb$ admits $\infty$-categorical models, that is $\ca$ and $\cb$ both admit enhancements. And then the higher categorical version of (b) can be divided into two questions:
\begin{enumerate}
\item[(b.1)] How unique is the $\infty$-categorical model of $\cb$, given the uniqueness of the model for $\ca$?
\item[(b.2)] If in the diagram \eqref{eq:incl} we let $\ca\subset\cb$ and $\ca'\subset\cb'$ be matching pairs, of full triangulated subcategories of two weakly approximable triangulated categories $\ct$ and $\ct'$, does the existence of a triangle equivalence $\ca\iso\ca'$ imply the existence of a triangle equivalence $\cb\iso\cb'$?
\end{enumerate}

The example about rings discussed above generalizes, and the argument can easily be modified to answer (b.1) and (b.2) when $\ca=\ct^c$ and $\cb=\ct$, as long as $\cb$ has an enhancement and the enhancement of $\ca$ is unique. This is because $\cb$, in this case, has an intrinsic higher categorical model which can be described purely in terms of a given (unique) model for $\ct^c$. But what about the other natural triangulated subcategories $\ca\longhookrightarrow\cb$ in \eqref{eq:incl}? Can we perform the same intrinsic reconstruction of $\cb$ starting from $\ca$?

Here is where the theory of weakly approximable triangulated categories and the systematic use of metrics and completions kicks in. For the reader's convenience the notion of a good metric on a triangulated category is recalled in \autoref{def:metric}. This theory was developed primarily in the articles \cite{Neeman25,Neeman18A}, and back in those manuscripts the main purpose was to provide an enhancement-free algorithm that constructs new triangulated categories out of old ones. The recipe takes a triangulated category $\cs$ with a good metric $\{\cm_i\}$ to a triangulated category $\fs(\cs)$ with a good metric $\{\cn_i\}$. And the relevance, to the world of weakly approximable triangulated categories, is that in each of the pairs $(\ct^c,\ct^b_c)$ and $(\ct^b,\tsb)$ we have that one category is obtainable from the other by this algorithm, for suitable choices of metrics.

Let us recall that the triangulated category $\cs$ always has the classical Yoneda embedding $\Yon:\cs\la\MMod\cs$, which takes an object $C\in\cs$ to the functor $\Hom(-,C)$ from the opposite category $\cs\op$ to the category of abelian groups. And the construction of $\fs(\cs)$ uses nothing more then the metric on $\cs$, in combination with this classical Yoneda map. But it can be helpful in computations to have an auxiliary triangulated category $\ct$, as long as it is a good extension with respect to the metric. A good extension is a fully faithful, triangulated functor $\fF\colon\cs\la\ct$ satisfying properties we recall in \autoref{def;goodext} and
\autoref{explanationofgoodextension}. And this is where enhancements enter, at least in
the current article. If $\cs$ has an enhancement
$\SSS$, then the Ind-construction $\IndNL(\SSS)$ associated to $\SSS$ and the corresponding Yoneda functor which embeds $\SSS$ into $\IndNL(\SSS)$ \emph{always} provides a good extension, with respect
to any metric on $\cs\iso\Hc(\SSS)$. Then the issue becomes to compare good extensions. One of the key technical results in the article, \autoref{L1.5}, does exactly that. Roughly it tells us that the completion $\fl'(\cs)$ of $\cs$ in $\ct$, with respect to the metric $\{\cm_i\}$, is often independent of the choice of a good extension $\fF\colon\cs\la\ct$. One immediate application is when we have an
equality $\ct=\fl'(\cs)$; as an almost
immediate corollary of \autoref{L1.5} we obtain
\autoref{P15.5}, which asserts that if the
good extension
$\fF\colon\cs\la\ct$ is such that $\fl'(\cs)=\ct$,
and if $\ct$ has an enhancement, then this enhancement
must agree with the one given by the full subcategory of $\IndNL(\SSS)$ whose objects are those in the completion of $\cs$ in 
$\Hc(\IndNL(\SSS))$. Thus, if $\cs$ has a unique enhancement and $\ct$ has an enhancement, then the enhancement of $\ct$ is unique. 

In summary, using this combination of metrics on the triangulated categories in conjunction with enhancements, we can provide a positive answer to both (b.1) and (b.2) in gorgeous generality. Let us start with the general result answering (b.1):

\begin{thmInt}[\autoref{prop:applTwa}]\label{thm:main2}
Let $\ct$ be a weakly approximable triangulated category satisfying one of the following two assumptions:
\begin{enumerate}
\item[{\rm (a)}] either $\ct$ is coherent;
\item[{\rm (b)}] or $\ct^c\subset\ct^b_c$.
\end{enumerate}
Let $\ca\subset\cb\subset\ct$ be two of the natural subcategories in the diagram \eqref{eq:incl}, such that
$\cb\neq\ct_c^-$ and, in the case where {\rm (a)} holds but {\rm (b)} does not,
the categories $\ca,\cb\in\{\ct^{c,b},\tsbb\}$ are also excluded. Assume further that
\begin{enumerate}
\item[{\rm (i)}] $\cb$ (and thus $\ca$) has an enhancement in $\HoStabInfA$.
\item[{\rm (ii)}] $\ca$ has a unique enhancement in $\HoStabInfA$.
\item[{\rm (iii)}]
In the case where $\ca=\ct^b_c$ and
{\rm (b)} holds, we
add the assumption that 
${}^\perp(\ct^b_c)\cap\ct^-_c=\{0\}$.
\end{enumerate}
Then $\cb$ has a unique enhancement in $\HoStabInfA$.

Moreover, there is a formula
for $\cb$ and its (unique) enhancement in terms of $\ca$ and its (unique) enhancement. In particular, given a triangulated category $\ca$ with a unique enhancement, there exists at most one unique $\cb$, containing $\ca$ and possessing an enhancement, for which there might exist a weakly approximable triangulated category $\ct$, with $\ca\subset\cb\subset\ct$ being a prescribed pair of the natural subcategories in the diagram \eqref{eq:incl},
and in the case where $\ca=\ct^b_c$
the ambient $\ct$ is assumed to satisfy {\rm(iii)}.
\end{thmInt}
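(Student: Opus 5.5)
The plan is to reduce everything to a chain of ``elementary steps'' in diagram \eqref{eq:incl}, each of which is either an Ind-completion or a metric completion, and then to propagate uniqueness of enhancements along the chain.

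Any pair $\ca\subset\cb$ permitted by the statement factors as a composite of basic inclusions from \eqref{eq:incl}. Uniqueness of enhancement is transitive along such a chain: if $\ca\subset\cb'\subset\cb$, $\cb$ has an enhancement (so $\cb'$ inherits one), and uniqueness passes $\ca\Rightarrow\cb'\Rightarrow\cb$, then it passes $\ca\Rightarrow\cb$. Thus it suffices to treat the generating steps, for which there are three mechanisms.

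\begin{roenumerate}
\item \emph{Ind-completion.} For the step $\ct^c\subset\ct$ we use $\TT\iso\IndNL(\SSS)$ for any enhancement $\TT$ of $\ct$, where $\SSS$ is the induced enhancement of $\ct^c$. Hence uniqueness for $\ct^c$ gives uniqueness for $\ct$.
\item \emph{Metric completion.} For the steps $\ct^c\subset\ct^b_c$ and $\ct^b\subset\tsb$, and their variants $\ct^{c,b}\subset\ct^b_c$ and $\tsbb\subset\ct^b$ under hypothesis (b), we exhibit $\cb=\fl'(\ca)$ as the completion of $\ca$ in a good extension with respect to a suitable good metric. We then invoke \autoref{L1.5} and \autoref{P15.5}. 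Concretely, the Yoneda embedding $\SSS\to\IndNL(\SSS)$ is a good extension for any metric, and \autoref{L1.5} identifies the completion computed there with the completion computed inside $\ct$. So any enhancement of $\cb$ coincides with the full subcategory of $\IndNL(\SSS)$ on the completion, and uniqueness passes from $\ca$ to $\cb$.
\item \emph{Intrinsic descriptions.} The remaining steps pass to bounded or bounded-above parts, or to a categorical closure, inside an already uniquely enhanced category. Examples are $\ct^b\subset\ct^-$, $\ct^-\subset\ct$, $\ct^b_c\subset\ct^-_c$ and $\ct^b\subset\ct^+$. For these I would use \autoref{thm:main1}: it shows the smaller subcategory is intrinsically determined inside the larger one (invariant under triangle equivalences). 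Combined with a reconstruction of the larger category from the smaller, either as a completion, as with $\ct^-$ from $\ct^b$ via homotopy colimits of truncations, or through the passage through $\tsb$, this yields the needed step.
\end{roenumerate}

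The hypotheses enter as follows. Coherence, or $\ct^c\subset\ct^b_c$, is what guarantees that the relevant metrics are good (indeed excellent), so that the completion description of $\ct^b_c$, respectively $\tsb$, holds. The assumption ${}^\perp(\ct^b_c)\cap\ct^-_c=\{0\}$ in (iii) is needed to reconstruct $\ct^-_c$, and then $\ct$, from $\ct^b_c$ through a dual, $\tsb$-type completion. This explains why $\cb=\ct^-_c$ is excluded, and why the $\ct^{c,b}$ and $\tsbb$ cases fail when only (a) holds.

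The ``formula'' part of the statement is read off from the construction: $\cb$ is the triangulated category of the full subcategory of $\IndNL(\SSS)$, or of an iterated construction from it, consisting of the completion objects. This is intrinsic to $(\ca,\SSS)$, so $\cb$ is determined up to equivalence.

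The main obstacle is verifying, for each step, that the relevant inclusion really is a completion $\fl'$ for a good metric whose good extension satisfies the hypotheses of \autoref{L1.5}. The delicate cases are $\tsbb\subset\ct^b$ and $\ct^b_c\to\ct$ under (b). For these I would first try the metrics $\cm_i=\ct^{\leq -i}\cap\cs$ and their opposite-category analogues, and appeal to the excellence results recalled in \autoref{sec:wa}.
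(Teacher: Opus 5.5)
There is a genuine gap. It comes in two parts.

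\textbf{Mechanism (ii) misidentifies the completions.} With the metrics $\ct^c\cap\ct^{\leq-i}$ on $\ct^c$ and $\tsb\cap\ct^{\leq-i}$ on $\tsb$, one has $\fl'(\ct^c)=\ct^-_c$ and $\fl'(\tsb)=\ct^-$. The categories $\ct^b_c$ and $\ct^b$ are instead $\fs(\ct^c)$ and $\fs(\tsb)$: the objects $F\in\fl'(\ca)$ with $\YYon(F)\in\fc(\ca)$.

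\begin{itemize}
\item \autoref{P15.5} needs $\cb=\fl'(\ca)$ to be the whole target of a good extension. So it does not apply to $\ct^c\subset\ct^b_c$, nor to $\tsb\subset\ct^b$ (the inclusion you mean; there is no inclusion $\ct^b\subset\tsb$). Homotopy colimits of Cauchy sequences in $\ca$ leave $\cb$.
\item Under (b), these steps are handled by $\Psi$ of \autoref{theconstructionpsi} together with \autoref{Leasyanalog}. Its input is $\cs\subset\fs(\cs)$, which is what (b) provides; (b) does not give excellence, since the metric on $\ct^c$ is excellent iff $\ct$ is coherent.
\item Hypothesis (iii) has nothing to do with reconstructing $\ct^-_c$. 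It only makes $\ct^c\subset\ct^b_c$ characteristic (\autoref{thm:passage}), so that $\Phi^?_{\ct^b_c,\ct^c}$ is defined and inverse to $\Psi$ (\autoref{lem:restTbcTc}).
\item Your mechanism (iii) cannot treat $\ct^b_c\subset\ct^-_c$. That step is exactly why $\cb=\ct^-_c$ is excluded (\autoref{qn:1}).
\item Invariance under equivalences (\autoref{thm:main1}) only makes restriction maps well defined; it never makes them injective. For $\ct^-\subset\ct$, $\ct^\pm\supset\ct^b$ and the like, injectivity comes from \autoref{P15.5} applied to $\ct$ or $\ct\op$ with metrics of the form $\ca\cap\ct^{\geq i}$ (\autoref{E2.5}). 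In particular $\ct^-$ is recovered from $\ct^b$ by homotopy \emph{limits}, not homotopy colimits.
\end{itemize}

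\textbf{The upward-chain strategy gives no route for several allowed pairs.} Under (a) without (b) one has $\ct^c\not\subset\ct^b_c$. For $\ca=\ct^b_c$ the paper does not reconstruct $\ct^b$ from $\ct^b_c$ directly; it goes down and back up:
\[
\Enhq(\ct^b)\longleftrightarrow\Enhq(\tsb)\longhookrightarrow\Enhq(\ct^c)\longleftrightarrow\Enhq(\ct^b_c).
\]
\begin{itemize}
\item The outer bijections come from the duality $\SSS\iso\Psi(\Psi(\SSS)\op)\op$ for excellent metrics (\autoref{P4.5}, \autoref{C4.7}). This is where coherence enters. Under (b), the right-hand bijection is instead $\Phi^?_{\ct^b_c,\ct^c}$, which is where (iii) is used.
\item The middle injection is the $\Delta$ construction of \autoref{lem:uniqenhTsb}. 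It rests on $(\tsb)^c=\ct^c$ and on $\tsb$ being generated by $\ct^c$ under existing coproducts (\autoref{L0.13}, \autoref{generationTsb}).
\end{itemize}
You give neither this $\Delta$ step nor the step $\tsb\subset\ct^-$ (\autoref{cor:applwaTsb}, which uses $\fl'(\tsb)=\ct^-$). Ind-completion cannot replace them, because hypothesis (i) provides no enhancement of $\ct$. Consequently your plan does not reach $(\ct^c,\tsb)$, $(\ct^c,\ct^-)$, or any pair with $\ca=\ct^b_c$.

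Finally, $(\ct^-_c,\ct^-)$ needs its own argument. The unique enhancement of $\ct^-_c$ fixes the restriction to $\ct^c$ of any enhancement of $\ct^-$. One then concludes using that $\Enhq(\ct^-)\to\Enhq(\tsb)\to\Enhq(\ct^c)$ is injective.
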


We will review the definition of $\HoStabInfA$ in \autoref{subsec:generalmod}. Here we just point out that $\StabInfA$ stands for either $\StabInfNL$ or $\StabInf$, the former being the category of stable $\infty$-categories and the latter the category of linear stable $\infty$-categories. Then $\HoStabInfA$ denotes a suitable localization of $\StabInfA$, where we formally invert equivalences. It is the natural place where $\infty$-categorical models of a triangulated category can be compared. For readers more familiar with the language of dg categories, it may be useful to recall that $\HoStabInf$ is equivalent to the corresponding localization of the category of pretriangulated dg categories.

In \autoref{cor:finTwa} we will see that, in the special case where
$\ct^c$ has a unique enhancement, \autoref{thm:main2} is both simple and useful: it tells us that any $\cb$ belonging to $\{\ct^b_c,\tsb,\ct^b,\ct^-,\ct^+,\ct\}$, which has an enhancement, has a unique enhancement.

Our answer to (b.2) comes from the following very general result:

\begin{thmInt}[\autoref{prop:applextTwa}]\label{thm:main3}
Let $\ct_1$ and $\ct_2$ be a pair of
weakly approximable triangulated categories, satisfying one of the following two assumptions for $i=1,2$:
\begin{enumerate}
\item[{\rm (a)}] either $\ct_i$ is coherent,
\item[{\rm (b)}] or $\ct_i^c\subset(\ct_i)^b_c$.
\end{enumerate}
For $i=1,2$, let $\ca_i\subset\cb_i\subset\ct_i$ be matching inclusions of subcategories in the diagram \eqref{eq:incl}, 
such that $\cb_i\ne(\ct_i)^-_c$, and where 
the categories $\ca_i,\cb_i\in\{\ct_i^{c,b},\tsbb_i\}$ are also excluded. Assume further that
\begin{enumerate}
\item[\rm (i)] $\cb_i$ (and thus $\ca_i$) has an enhancement in $\HoStabInfA$, for $i=1,2$.
\item[\rm (ii)] $\ca_i$ has a unique enhancement
in $\HoStabInfA$, for $i=1,2$.
\item[{\rm (iii)}]
In the case where $\ca_i=(\ct_i)^b_c$,
we assume that either  
{\rm (a)} holds for both
$\ct_1$ and $\ct_2$, or
else that {\rm (b)} holds for both.
And when both $\ct_1$ and $\ct_2$
satisfy {\rm (b)} we furthermore
suppose that
${}^\perp(\ct_i)^b_c\cap(\ct_i)^-_c=\{0\}$,
for $i=1,2$.
\end{enumerate}
If $\ca_1\iso\ca_2$, then $\cb_1\iso\cb_2$.
\end{thmInt}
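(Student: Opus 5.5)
The plan is to deduce the result from the reconstruction formula underlying \autoref{thm:main2}, made functorial with respect to equivalences. Fix an equivalence $\ca_1\iso\ca_2$. By (ii), the enhancements $\AAA_1$ of $\ca_1$ and $\AAA_2$ of $\ca_2$ are unique. Transport the enhancement of $\ca_2$ to $\ca_1$ through the equivalence; uniqueness then gives an equivalence $\AAA_1\iso\AAA_2$ in $\HoStabInfA$. Its $\Hc$ agrees with the given triangle equivalence, possibly after composing with an autoequivalence of $\ca_2$. So it suffices to show that $\cb_i$, together with its enhancement, is recovered from $\AAA_i$ by a recipe that uses only $\AAA_i$ and not the ambient $\ct_i$.

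The recipe is built along the inclusions in \eqref{eq:incl}, one step at a time, so I first reduce to the case where $\ca_i\subset\cb_i$ is an elementary step. The elementary steps are:
\begin{itemize}
\item the completions $\ct^c\rightsquigarrow\ct^b_c$ and $\ct^b\rightsquigarrow\tsb$ (or their reverse, via \autoref{thm:main1});
\item Ind-completion $\ct^c\rightsquigarrow\ct$;
\item intrinsic passages between $\ct$ and $\ct^\pm,\ct^b$, and from $\tsb$ to $\ct^-$, and so on.
\end{itemize}
For a composite, each intermediate category has an enhancement by (i), since it is a subcategory of $\cb_i$. By \autoref{thm:main2} that enhancement is unique, so the induction proceeds. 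The excluded cases $\cb_i=(\ct_i)^-_c$ and $\ct^{c,b},\tsbb$ are exactly those where no such step is available.

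For a completion step, equip $\ca_i=\Hc(\AAA_i)$ with the appropriate good metric $\{\cm_n\}$. The metric is defined intrinsically: the $t$-structure truncations can be read off $\ca_i$, using \autoref{thm:main1} and the fact that the metric is equivalent to one defined without $\ct_i$. The good extension is $\AAA_i\hookrightarrow\IndNL(\AAA_i)$. By \autoref{L1.5}, the completion $\fl'(\ca_i)$ computed in $\Hc(\IndNL(\AAA_i))$ agrees with the one computed in $\ct_i$. The latter is $\cb_i$, by the results recalled in \autoref{sec:wa}, using coherence or (iii) when $\ca_i=(\ct_i)^b_c$. So $\cb_i$ is the full subcategory of $\Hc(\IndNL(\AAA_i))$ on the completion, and the equivalence $\AAA_1\iso\AAA_2$ induces $\IndNL(\AAA_1)\iso\IndNL(\AAA_2)$. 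This carries one completion to the other, provided the metrics correspond. For the Ind-step the conclusion is immediate, since $\ct_i\iso\Hc(\IndNL(\AAA_i))$.

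The main obstacle is the compatibility of metrics under $\ca_1\iso\ca_2$. The metrics are defined using the $t$-structures of $\ct_1$ and $\ct_2$, and an abstract equivalence need not preserve them. My first attempt is the standard fact for weakly approximable categories: the $t$-structures in the preferred equivalence class, and hence the metrics on $\ct^c$ and $\ct^b$, are determined up to equivalence intrinsically, for instance via $\ct^{\le -n}=\ogenul{G}{}{-n}$ for compact generators. This should make the transported metric equivalent to the given one. Equivalent metrics give the same completion, which is what \autoref{L1.5} needs.
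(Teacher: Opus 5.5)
Your reduction is the paper's: \autoref{prop:applextTwa} follows from the ``Moreover'' part of \autoref{prop:applTwa}, which gives an explicit recipe producing the enhancement of $\cb_i$ from $\AAA_i$. You also correctly note that the metrics must match under an arbitrary equivalence $\ca_1\iso\ca_2$, a point the paper leaves implicit. However, your compact-generator argument only covers $\ct^c$ and $\tsb$ (cf.\ \autoref{lem:charmetric}); for $\ct^\pm$, $\ct^b$ and $\ct^b_c$ one needs the intrinsic descriptions cited in \autoref{ex:appl1} and \autoref{ex:charmetricTc}.

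The genuine gap is in your own reconstruction of the recipe. Your elementary steps give no way to pass from $\ct^c$ to $\tsb$ without going through $\ct$. The route ``Ind-complete to $\ct$, then restrict'' needs $\ct_i$ to be enhanced, but (i) only enhances $\cb_i$, and without that $\Hc(\IndC(\AAA_i))$ need not be equivalent to $\ct_i$. The missing ingredient is the construction $\Delta$ of \autoref{constrtenhTsb} and \autoref{lem:uniqenhTsb}. There $\tsb$ and its enhancement are recovered as the full subcategory of $\IndC(\AAA)$ on $\bigcup_n\ogenu G{}{-n,n}$. This rests on $\ct^c\subset(\tsb)^c$ (\autoref{L0.13}), on \autoref{generationTsb} and on \autoref{L0.9}. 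Pairs such as $(\ct^c,\tsb)$, $(\ct^c,\ct^-)$, $(\ct^-_c,\ct^-)$ and $(\ct^b_c,\ct^b)$ all go through this step.

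Your completion step also conflates $\fl'$ with $\fs$. For the metric $\ct^c\cap\ct^{\leq-i}$, the completion $\fl'(\ct^c)$ inside $\ct$ is $\ct^-_c$, not $\ct^b_c=\fs(\ct^c)$. Likewise $\fl'(\tsb)=\ct^-$, whereas $\fs(\tsb)=\ct^b$. Moreover, \autoref{L1.5} needs an enhancement of the ambient category of the good extension, so ``computed in $\ct_i$'' is not available. For the $\LL$-steps the ambient must be $\cb_i$ itself, as in \autoref{P15.5}. For the $\fs$-steps the paper uses \autoref{Leasyanalog} when $\ca\subset\fs(\ca)$, i.e.\ in case (b), and the duality of \autoref{P4.5} and \autoref{C4.7} otherwise. \autoref{thm:main1} only restricts equivalences to subcategories, so it cannot supply $\tsb\rightsquigarrow\ct^b$. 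Nor can it supply $\ct^b_c\rightsquigarrow\ct^c$ when (b) fails, since then $\ct^c\not\subset\ct^b_c$.

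This is exactly where (iii) enters. Under (a), the pair $(\ct^b_c,\ct^b)$ is handled by $\Psi\comp\Delta\comp(\Psi(\AAA\op)\op)$. This passes through $\ct^c$ and $\tsb$, which do not lie between $\ct^b_c$ and $\ct^b$. So your induction (``intermediate categories are subcategories of $\cb_i$, hence enhanced by (i), and uniquely by \autoref{thm:main2}'') does not apply. Their enhancements come instead from the bijections $\Enhq(\ct^b)\leftrightarrow\Enhq(\tsb)$ and $\Enhq(\ct^c)\leftrightarrow\Enhq(\ct^b_c)$ of \autoref{lem:unienTbcTc}. Similarly, for $(\ct^-_c,\ct^-)$ you must transport the specific enhancement $\Phi^?_{\ct^-_c,\ct^c}(\AAA)$ through $\Delta$ and $\LL$. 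Appealing to uniqueness of the enhancement of $\ct^c$ is not an option, since that uniqueness is not available.
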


Before moving on to applications, we should say a tiny bit about the ideas that enter into the proofs of our three main results. \autoref{thm:main1} is a purely triangulated statement: the underlying idea is to use the full power of the theory of weakly approximable triangulated categories and compactly generated \tstr s to characterize the natural subcategories. The new results require the development of new techniques, some of which we saw in \cite{Canonaco-Neeman-Stellari24} and more will come in the first part of the present paper. But it all happens strictly in the world of triangulated categories.

As we have already explained, \autoref{thm:main2} and \autoref{thm:main3} depend crucially on the machinery of stable $\infty$-categories. We want to embed stable $\infty$-categories into new and bigger stable $\infty$-categories, and the basic tool is the Yoneda map---somehow the key to everything is to look at suitable versions of restricted Yoneda maps. Of course we also combine these Yoneda embeddings creatively with well-chosen metrics. And if the metrics are sufficiently nice (very good or excellent, as in \autoref{def:excmetrics}) then out come stable $\infty$-categories, which we then work to identify with the enhancements we are after.

\subsection*{Applications}

The previous result are abstract, and to show that they are useful it helps to explain how they can be applied to classical examples.

\subsubsection*{(Strong) uniqueness of enhancements}

The first important applications are about uniqueness of enhancements for geometric categories. We begin with questions raised by others, for example
Question 8.16 (v) and (vi) in \cite{Antieau18}:

\medskip

\noindent{\bf Question} (Antieau) \emph{Let $X$ be a quasi-compact and quasi-separated scheme. 
Do the triangulated categories $\Dqc(X)$ and $\dperf{X}$ have unique enhancements in $\HoStabInfA$?} 

\medskip

The result is known for enhancements in $\HoStabInf$, for any commutative ring $\kK$; see Theorem B in \cite{Canonaco-Neeman-Stellari21}. In this article we show that the answer to Antieau's question is positive in full generality. At the same we time extend the scope, proving the following as an application of \autoref{thm:main2}:
\begin{itemize}
\smallskip
\item (\autoref{cor:geomappli}) Let $X$ be a quasi-compact and quasi-separated scheme. Then all the triangulated subcategories of $\Dqc(X)$
\[
\xymatrix{
&\Dqcpb(X)\ar@{^{(}->}[rd]&&\Dqcpl(X)\ar@{^{(}->}[dr]&\\
\dperf{X}\ar@{^{(}->}[rd]\ar@{^{(}->}[ru]&&\Dqcb(X)\ar@{^{(}->}[ur]\ar@{^{(}->}[dr]&&\Dqc(X)\\
&\Dqc(X)\SB\ar@{^{(}->}[ru]&&\Dqcmi(X)\ar@{^{(}->}[ur]&
}
\]
have unique enhancements in $\HoStabInfA$.
\smallskip
\end{itemize}

The category $\Dqcpb(X)$, which might look mysterious to the nonexpert, becomes familiar when we restrict $X$ to be noetherian. If $X$ is noetherian then $\D^b(\coh(X))\iso\Dqcpb(X)$; that is the category $\Dqcpb(X)$ may be viewed as the right generalization of $\D^b(\coh(X))$ when we want to allow $X$ to be non-noetherian. It should be noted that (A) not only improves all known results in the literature \cite{Antieau18,Canonaco-Neeman-Stellari21,Canonaco-Stellari18,Lunts-Orlov10}, but also deals with uniqueness of nonlinear enhancements. As we observe in \autoref{rmk:uniqueaffine}, the same result holds for the triangulated categories naturally associated to a ring $R$.

Back to the geometric applications, we can look at the full triangulated subcategory $\Dqcs Z(X)$ of $\Dqc(X)$, consisting of complexes with support on $Z$. Here $X$ is still assumed to be quasi-compact and quasi-separated, while $Z\subset X$ is a closed subset with quasi-compact complement in $X$. The uniqueness of enhancements for $\Dqcs Z(X)$ and its natural subcategories has been explored in \cite{Canonaco-Stellari14}, but only in the special case where $Z$ is projective and some further restrictions apply. Here we can prove the following general result, further extending the positive answer to Antieau's question.
\begin{itemize}
\smallskip
\item (\autoref{cor:geomappli}) Let $X$ be a noetherian scheme and let $Z\subset X$ be a closed subset. Then all the natural triangulated categories in the diagram
\[
\xymatrix{
& \Dqcs Z(X)\SB \ar@{^{(}->}[r] & \Dqcbs Z(X) \ar@{^{(}->}[r] \ar@{^{(}->}[dd] & \Dqcpls Z(X) \ar@{^{(}->}[dd] \\
\dperfs{Z}X \ar@{^{(}->}[r] \ar@{^{(}->}[ru] & \dbcohs Z(X) \ar@{^{(}->}[ru] \ar@{^{(}->}[d] & & \\
& \dmcohs Z(X) \ar@{^{(}->}[r] & \Dqcmis Z(X) \ar@{^{(}->}[r] & \Dqcs Z(X)
}
\]
have unique enhancements in $\HoStabInfA$.
\smallskip
\end{itemize}

There is a third triangulated category of interest for us: the homotopy category of spectra $\ho{\Spe}$. The uniqueness of enhancements for this category and its subcategory $\Ho\Spe^c$ of compact objects was investigated back in \cite{Schwede07}. Here we provide a much fuller treatment:
\begin{itemize}
\smallskip
\item (\autoref{cor:spectrappli})
Of the natural triangulated subcategories of $\Ho{\Spe}$ in the diagram \eqref{eq:incl},
we have that $\Ho\Spe^{c,b}$, $\Ho\Spe^c$,
$\Ho\Spe^b_c$, $\Ho\Spe\SB$, $\Ho\Spe^b$, $\Ho\Spe^+$,
$\Ho\Spe^-$ and $\Ho\Spe$ 
all have unique enhancements in $\HoStabInfNL$.
\smallskip
\end{itemize}

From our perspective this is an interesting example, since $\ho{\Spe}$ does not have stable $\infty$-categorical models which are linear over a commutative ring.

We have so far discussed uniqueness of enhancements, but there is a finer notion called strong uniqueness; see \autoref{def:struniqenh}.
Despite considerable effort over the years, our understanding of strong uniqueness remains poor. This is not for the lack of effort, the reader can consult for example
\cite{Canonaco-Stellari14,Canonaco-Stellari07, Lunts-Orlov10,OlanderThesis, Orlov97} for a sample of the literature. Here the major contribution of the current article is
\begin{itemize}
\smallskip
\item (\autoref{lem:struniqgen} and \autoref{cor:struniqexcomp})
Let $\ct$ be a weakly approximable and coherent triangulated category. Then $\ct^c$ has a strongly unique enhancement if and only if $\ct^b_c$ does.
\smallskip
\end{itemize}

In \cite[Section 5.A]{LorenzinThesis}
it is shown that,
if $X$ is a scheme proper over a field $\kK$, with depth $\geq 1$ at every closed point, then $\D^b(\coh(X))$ has a strongly unique enhancement in $\HoStabInf$. From (D) it follows that $\dperf{X}$ also has a strongly unique enhancement in $\HoStabInf$.

\subsubsection*{Morita theory for schemes}

Turning to our third main result \autoref{thm:main3}, a consequence is the following.

\begin{thmInt}[\autoref{cor:D}]\label{thm:main4}
Let $X$ and $X'$ be quasi-compact and quasi-separated schemes with closed subschemes $Z\subset X$ and $Z'\subset X'$ such that $X\setminus Z$ and $X'\setminus Z'$ are quasi-compact.
Assume further that one of the two conditions below
holds:
\be
\item
Either $Z=X$ or $Z'=X'$.
\item
One of $X$ or $X'$ is noetherian.
\ee
Then the following are equivalent:
\begin{enumerate}[{\rm (1)}]
\item There exists a triangle equivalence $\Dqcs Z(X)\iso\Dqcs{Z'}(X')$.
\item There exists a triangle equivalence $\Dqcpls Z(X)\iso\Dqcpls{Z'}(X')$.
\item There exists a triangle equivalence $\Dqcmis Z(X)\iso\Dqcmis{Z'}(X')$.
\item There exists a triangle equivalence $\Dqcbs Z(X)\iso\Dqcbs{Z'}(X')$.
\item There exists a triangle equivalence $\Dqcps Z(X)\iso\Dqcps{Z'}(X')$.
\item There exists a triangle equivalence $\Dqcpbs Z(X)\iso\Dqcpbs{Z'}(X')$.
\item There exists a triangle equivalence $\dperfs ZX\iso\dperfs{Z'}{X'}$.
\item There exists a triangle equivalence $\Dloc_Z(X)\iso\Dloc_{Z'}(X')$.
\end{enumerate}
\end{thmInt}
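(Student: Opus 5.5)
The plan is to take $\ct=\Dqcs Z(X)$ and $\ct'=\Dqcs{Z'}(X')$ as the ambient weakly approximable categories, and to use the known dictionary between the natural subcategories of \eqref{eq:incl} and the geometric categories. For $X$ quasi-compact and quasi-separated and $X\setminus Z$ quasi-compact, $\ct$ is weakly approximable: it is compactly generated by a single perfect complex supported on $Z$, and the $t$-structure is the standard one. The natural subcategories are then
\[
\ct^-=\Dqcmis Z(X),\quad \ct^+=\Dqcpls Z(X),\quad \ct^b=\Dqcbs Z(X),\quad \ct^c=\dperfs ZX,
\]
\[
\ct^-_c=\Dqcps Z(X),\quad \ct^b_c=\Dqcpbs Z(X),\quad \tsb=\Dloc_Z(X),
\]
where the last identification is the one expected for $\tsb$. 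Items (1)--(8) are therefore the statements ``$\cb\iso\cb'$'' for $\cb$ running over $\ct,\ct^+,\ct^-,\ct^b,\ct^-_c,\ct^b_c,\ct^c,\tsb$.

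The implications going down the diagram come from \autoref{thm:main1}. A triangle equivalence $\cb\iso\cb'$ restricts to $\ca\iso\ca'$ along each solid arrow of \eqref{eq:incl}. Starting from (1), this gives (3), (4), (2), (8), (5), (7), and $(5)\Rightarrow(6)$ via $\ct^b_c\hookrightarrow\ct^-_c$. To get from (6) back up to (7) we need the inclusion $(\ast)$, and \autoref{thm:main1} makes this available under hypothesis (i) or (ii) there. This is where the two alternative assumptions of the statement enter. If one of $X,X'$ is noetherian, the corresponding category is coherent. If $Z=X$, then $\ct^c=\dperf X\subset\ct^b_c$, and the vanishing ${}^\perp(\ct^b_c)\cap\ct^-_c=\{0\}$ should follow by testing against suitable pseudo-coherent bounded objects.

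I expect the main obstacle to be the asymmetry here: \autoref{thm:main1} asks for the same type of hypothesis on both $\ct$ and $\ct'$. To handle this, I would first argue that an equivalence at any level forces $X'$ to inherit the relevant property. For instance, $Z=X$ can be detected as $\ct^c\subset\ct^b_c$, which is intrinsic because both subcategories are invariant by \autoref{thm:main1}. Noetherianity, or at least coherence of $\ct'$, should be recovered from approximability properties that \autoref{thm:main1} transports. This transfer, together with the identification of $\tsb$ with $\Dloc_Z(X)$, is the delicate part.

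For the upward implications I would use \autoref{thm:main3}. It requires enhancements of all the categories involved, which exist because they are full subcategories of the stable $\infty$-category enhancing $\Dqcs Z(X)$. It also requires uniqueness of enhancements for $\ca$. For $\ca=\ct^c=\dperfs ZX$ this comes from the geometric uniqueness results (\autoref{cor:geomappli}); in the non-noetherian case with $Z=X$ it is the uniqueness of enhancements of $\dperf X$. With $\ca=\ct^c$, \autoref{thm:main3} gives $(7)\Rightarrow(1)$, which closes the loop: (1) gives everything, every statement gives (7), and (7) gives (1). The steps from (6) to (7) use $(\ast)$ as above. The steps from (8) to (7) use the solid arrow $\ct^c\hookrightarrow\tsb$ when $\ct^c\subset\tsb$ holds in this setting; otherwise they go through (6) and $(\diamond)$, which applies under $\ct^c\subset\ct^b_c$, or alternatively \autoref{thm:main3} is applied with $\ca=\ct^b_c$. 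Hypothesis (iii) of \autoref{thm:main3} is arranged by the same transfer argument as before, and checking the exclusion conditions of that theorem is routine.
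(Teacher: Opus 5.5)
Your architecture matches the paper's: push any equivalence in the list down to $\dperfs ZX\iso\dperfs{Z'}{X'}$ using \autoref{thm:main1}, then come back up using the uniqueness of enhancements of that category (\autoref{cor:geomappli}) together with \autoref{thm:main3}, in the form of \autoref{cor:applextTwa}. The gap is in the downward step through $(\ast)$ and $(\diamond)$. You need that step for $(6)\Rightarrow(7)$, and also to reach $\ct^c$ from $\ct^+$ or $\ct^b$.

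\textbf{Where the argument goes wrong.} You assume the hypotheses (i)/(ii) of the statement are what make $(\ast)$ intrinsic, so you try to transfer them from $X$ to $X'$. That transfer is false as proposed.
\begin{itemize}
\item The inclusion $\ct^c\subset\ct^b_c$ does not detect $Z=X$. Perfect complexes on a quasi-compact scheme have bounded cohomology. Hence $\ct^c\subset\ct^b_c$ holds for every pair $Z\subset X$ in the statement; compare the table in \autoref{ex:waexample}, where $\ct^{c,b}=\ct^c$.
\item In fact $Z=X$ is not invariant under equivalence at all. If $Z$ is a nonempty proper open and closed subset of $X$, then $\Dqcs Z(X)\iso\Dqc(Z)$.
\item Your recovery of noetherianity or coherence of $\ct'$ ``from approximability properties'' is not substantiated. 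The only available criterion, \autoref{excellencebycompacts}, detects coherence from $\ct^c$. At the stage of $(6)\Rightarrow(7)$ you do not yet have $\ct^c\iso{\ct'}^c$.
\end{itemize}

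\textbf{The missing fact.} Condition (ii) of \autoref{thm:main1} holds for every $\Dqcs Z(X)$ with $X$ quasi-compact and quasi-separated and $X\setminus Z$ quasi-compact. No noetherian or $Z=X$ assumption is needed.
\begin{itemize}
\item The inclusion $\ct^c\subset\ct^b_c$ holds as explained above.
\item The vanishing ${}^\perp(\ct^b_c)\cap\ct^-_c=\{0\}$ is \cite[Proposition~10.1.3]{Canonaco-Neeman-Stellari24}, recorded in \autoref{ex:coherent}(ii). This is a genuine theorem in the non-noetherian case, not something you get by a quick test against pseudo-coherent objects.
\end{itemize}
With this fact the asymmetry disappears. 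The inclusions $(\ast)$ and $(\diamond)$ are intrinsic on both sides, so every item implies (7). Condition (b) of \autoref{thm:main3} holds for both categories, and with $\ca=\ct^c$ hypothesis (iii) there is vacuous. The hypotheses (i)/(ii) of the statement then enter at exactly one place: to guarantee that $\dperfs ZX\iso\dperfs{Z'}{X'}$ has a unique enhancement. You handle that part correctly.

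A minor point: your hedge ``when $\ct^c\subset\tsb$ holds in this setting'' is unnecessary. That inclusion always holds and is intrinsic by \autoref{C0.15}.
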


This provides a solution to Problem 4.11 in \cite{CNS25}, and leads to a major generalization of Theorem C in \cite{Canonaco-Neeman-Stellari24}. The reader might also wish to compare with the beautiful, classical result by Rickard \cite{Rickard89b}, which is the $\D(\MMod R)$ version of the above result.

\subsubsection*{The Uniqueness Conjecture for the homotopy category of spectra}

In the world of homotopy theory \autoref{thm:main3} has an old antecedent, the Margolis Uniqueness Conjecture. For the reader's convenience the conjecture is recalled in \autoref{conj:margolis}, but for the purpose of the introduction let us rephrase it as saying that $\ct^c=\Ho\Spe^c$ determines $\ct=\Ho\Spe$.
And now we discuss the relevance of the results
in the article.

Let $\ct_1$ and $\ct_2$ be two compactly
generated triangulated categories, and assume
that $\ct_1^c\cong\ct^c_2$. In
\autoref{prop:ax2} we will learn that
if $\ct_1$ is weakly approximable then so is
$\ct_2$, while
\autoref{excellencebycompacts}
teaches us that if $\ct_1$ is coherent then so
is $\ct_2$. 
Now let us specialize to the case
where $\ct_1=\Ho\Spe$; it is both weakly
approximable and coherent, and hence
any possible $\ct_2$ as in \autoref{conj:margolis}
must be weakly approximable and coherent.

This allows us to formulate a
generalized Margolis conjecture, see
\autoref{conj:margolis2}; the statement is
enhancement-free. What we
can prove in this direction,
as an application of
the
abstract \autoref{thm:main3},
is an assertion
involving enhancements.

\begin{thmInt}[\autoref{margolis2}]\label{thm:main5}
Let $\ct$ be a coherent, weakly approximable triangulated category. Now consider
the 
two lists of categories
\begin{align*}
\cc_\Spe:=&\left\{\Ho{\Spe},\Ho{\Spe}^b,\Ho{\Spe}^-,\Ho{\Spe}^+,\Ho{\Spe}\SB,\Ho{\Spe}^-_c,\Ho{\Spe}^b_c,\Ho{\Spe}^c\right\},\\
\cc_\ct:=&\left\{\ct,\ct^b,\ct^-,\ct^+,\tsb,\ct^-_c,\ct^b_c,\ct^c\right\}.
\end{align*}
and let $\ca_1\subset\cb_1$ in $\cc_\Spe$ and $\ca_2\subset\cb_2$ in $\cc_\ct$ be matching subcategories on the lists. If there is a triangle equivalence $\ca_1\iso\ca_2$, if $\cb_2$ has an enhancement in
$\HoStabInfNL$, and if $\cb_i\neq(\ct_i)^-_c$,
then there is a triangle equivalence $\cb_1\iso\cb_2$.
\end{thmInt}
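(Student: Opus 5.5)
The plan is to deduce \autoref{thm:main5} from \autoref{thm:main3}, with $\ct_1=\Ho\Spe$ and $\ct_2=\ct$, checking its hypotheses one at a time.

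First, the standing assumptions. $\Ho\Spe$ is weakly approximable and coherent, and $\ct$ is coherent by hypothesis, so condition (a) of \autoref{thm:main3} holds for both $\ct_1$ and $\ct_2$. This also settles the first clause of (iii): when $\ca_i=(\ct_i)^b_c$ we are in the case where (a) holds for both, so the perpendicularity condition is not needed. The lists $\cc_\Spe$ and $\cc_\ct$ omit $\ct^{c,b}$ and $\tsbb$, so the excluded pairs never occur. The requirement $\cb_i\neq(\ct_i)^-_c$ is assumed in the statement.

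Second, the enhancement hypotheses (i) and (ii).
\begin{itemize}
\item For $\ct_1=\Ho\Spe$, I would invoke \autoref{cor:spectrappli}. It says that $\Ho\Spe^c$, $\Ho\Spe^b_c$, $\Ho\Spe\SB$, $\Ho\Spe^b$, $\Ho\Spe^\pm$ and $\Ho\Spe$ all have unique enhancements in $\HoStabInfNL$. This gives (i) and (ii) for $\ca_1\subset\cb_1$, noting that $\ca_1\ne\Ho\Spe^-_c$ whenever it occurs as a smaller member.
\item For $\ct_2$, hypothesis (i) is assumed: $\cb_2$ has an enhancement, and restricting it to the full subcategory $\ca_2$ gives an enhancement of $\ca_2$.
\item The main obstacle is (ii) for $\ca_2$: we need $\ca_2$ to have a unique enhancement. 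Uniqueness is invariant under triangle equivalence, so the equivalence $\ca_2\iso\ca_1$ transfers the unique enhancement of $\ca_1$, supplied by \autoref{cor:spectrappli}, to $\ca_2$. The one case needing care is $\ca_1=\Ho\Spe^-_c$, which may occur as the smaller category. If \autoref{cor:spectrappli} does not cover it, I would first check whether it is needed at all. Inspecting diagram \eqref{eq:incl}, the only $\cb$ containing $\ct^-_c$ is $\ct^-$. For that pair I would factor through $\ct^c\subset\ct^-$ (or $\ct^b_c\subset\ct^-$). Here \autoref{thm:passage} gives the needed invariance, since a triangle equivalence $\ct^-_c\iso{\ct'}^-_c$ restricts to $\ct^c\iso{\ct'}^c$. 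Alternatively, I would obtain uniqueness for $\Ho\Spe^-_c$ via \autoref{thm:main2} applied to $\ct^c\subset\ct^-_c$.
\end{itemize}

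Third, with all hypotheses verified, \autoref{thm:main3} gives $\cb_1\iso\cb_2$. Where the pair was replaced by a smaller starting category, the argument is to run \autoref{thm:main3} for that smaller category together with $\cb$, after first restricting the given equivalence via \autoref{thm:passage}.
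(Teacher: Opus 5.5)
Your proposal is correct and follows the paper's proof. Both apply \autoref{prop:applextTwa} with $\Ho{\Spe}$ and $\ct$ satisfying (a), take uniqueness for $\ca_1$ from \autoref{cor:spectrappli} and transport it to $\ca_2$ along the given equivalence. In the remaining case $\ca_1=\Ho{\Spe}^-_c$, both restrict the equivalence to $\Ho{\Spe}^c\iso\ct^c$ via \autoref{thm:passage} first; this also covers $\cb=\ct$, which contains $\ct^-_c$ too, contrary to your remark that $\ct^-$ is the only one.

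Drop your alternative via \autoref{thm:main2}: that theorem excludes $\cb=\ct^-_c$, and uniqueness of enhancements for $\ct^-_c$ is exactly the open \autoref{qn:1}.
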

Note that \cite{Schwede07} proved the special case which considered only the pair of subcategories $\Ho\Spe^c\subset\Ho\Spe$, as in the original Margolis Uniqueness Conjecture (see \autoref{conj:margolis}).

\subsubsection*{Comparing autoequivalence groups}

Finally we come to autoequivalence groups; there
is a rich literature out there,
studying the group of
autoequivalences of certain triangulated categories.
The contribution of the current article is
as follows.
\begin{thmInt}[\autoref{corollaryautomorphisms}]\label{thm:main6}
Suppose $\ct$ is a weakly approximable
triangulated category, such that the subcategories
$\ct^c,\tsb,\ct^b,\ct^-,\ct^+,\ct$ all
have unique enhancements. Then the automorphism
groups of these enhancements are all isomorphic.

If we furthermore assume that one of the following holds
\be
\item
either $\ct$ is coherent,
\item
or we have $\ct^c\subset\ct_c^b$ and ${}^\perp(\ct^b_c)\cap\ct^-_c=\{0\}$,
\ee
then $\ct^b_c$ also has a unique enhancement, and
the automorphism group of this unique enhancement is the same.
\end{thmInt}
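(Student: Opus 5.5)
The strategy is to show that each restriction map between automorphism groups of enhancements, induced by one of the natural inclusions $\ca\subset\cb$ of diagram \eqref{eq:incl}, is an isomorphism. Write $\AAA$ and $\BB$ for the unique enhancements of $\ca$ and $\cb$, and $\Aut(\AAA)$, $\Aut(\BB)$ for automorphism groups in $\HoStabInfA$. By \autoref{thm:main1}, any autoequivalence of $\Hc(\BB)\iso\cb$ restricts to one of $\ca$. At the enhanced level, the full stable $\infty$-subcategory $\AAA\subset\BB$ on the objects of $\ca$ is therefore preserved by every automorphism of $\BB$. Since the enhancement of $\ca$ is unique, this subcategory is (a model of) the unique enhancement of $\ca$, and we obtain a restriction homomorphism $\rho\colon\Aut(\BB)\to\Aut(\AAA)$. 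I would verify isomorphism for the chain
\[
\ct^c\subset\tsb\subset\ct^b\subset\ct^-\subset\ct,\qquad \ct^b\subset\ct^+\subset\ct,
\]
or more simply for each pair $\ct^c\subset\cb$. Composing these gives that all the groups coincide with $\Aut$ of the enhancement of $\ct^c$.

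The inverse of $\rho$ comes from the reconstruction formula of \autoref{thm:main2}, which itself rests on \autoref{L1.5} and \autoref{P15.5}. The enhancement $\BB$ is recovered from $\AAA$ as a full subcategory of $\IndNL(\AAA)$, or of an iterated construction through intermediate steps such as $\ct^c\to\ct$ followed by truncation-defined subcategories. Concretely, its objects are those lying in the completion $\fl'(\ca)$ of $\ca$ in $\Hc(\IndNL(\AAA))$ with respect to a chosen excellent metric, possibly after further intrinsic operations such as passing to $\ct^-$, $\ct^+$ or $\ct^b$. An automorphism $\phi$ of $\AAA$ induces $\IndNL(\phi)$ on $\IndNL(\AAA)$. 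To show that this preserves the subcategory, I need the metric to be intrinsic up to equivalence: the metrics used, for instance $\cm_i=\ct^c\cap\ct^{\leq -i}$, must be preserved up to equivalence of metrics by autoequivalences of $\ca$. I expect to obtain this from the invariance statements behind \autoref{thm:main1}, since the \tstr\ is determined up to equivalence by compact generation, weak approximability being intrinsic. The completion then depends only on the equivalence class of the metric, so $\IndNL(\phi)$ restricts to an automorphism of $\BB$. The two constructions are mutually inverse. Restricting $\IndNL(\phi)|_{\BB}$ back to $\AAA$ gives $\phi$ by Yoneda. Conversely, an automorphism $\psi$ of $\BB$ is determined by its restriction to $\AAA$, because $\BB$ is identified, uniquely up to equivalence by \autoref{P15.5}, with the completion inside $\IndNL(\AAA)$, and that equivalence is natural with respect to $\psi$.

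For the second part, \autoref{thm:main2} with assumption (a) or (b), together with (iii), yields uniqueness of the enhancement of $\ct^b_c$ from that of $\ct^c$. The same argument applied to the pair $\ct^c\subset\ct^b_c$ then identifies the automorphism groups. In the coherent case without $\ct^c\subset\ct^b_c$, I would instead use the pair $\ct^b_c\subset\ct^-_c$ or go through $\ct$, since $\ct^b_c$ is recovered from $\ct^c$ as $\fs(\ct^c)$.

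The main obstacle is the naturality claim. Identifications in $\HoStabInfA$ are only up to equivalence, so one must check that the reconstruction is functorial on automorphisms, i.e. that the map $\Aut(\AAA)\to\Aut(\BB)$ is a well-defined group homomorphism inverse to $\rho$ rather than just a bijection of sets. I would handle this by working with the mapping spaces of $\StabInfA$ and using that the restricted Yoneda functor $\BB\to\IndNL(\AAA)$ is fully faithful (\autoref{L1.5}). Automorphisms of $\BB$ then correspond to automorphisms of $\IndNL(\AAA)$ preserving $\BB$, which are exactly $\IndNL$ of automorphisms of $\AAA$.
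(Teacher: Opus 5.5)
The real gap is the second part in the coherent case when $\ct^c\not\subset\ct^b_c$. This is exactly the case of $\Ho{\Spe}$, where $\ct^c\cap\ct^b_c=\ct^{c,b}=\{0\}$.

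With no inclusion between $\ct^c$ and $\ct^b_c$, your restrict-then-$\IndC$ mechanism has nothing to act on, and neither of your fallbacks works.
\begin{itemize}
\item The pair $\ct^b_c\subset\ct^-_c$ is unusable. The paper has no uniqueness or reconstruction result for $\ct^-_c$; it is excluded throughout, cf.\ \autoref{qn:1}.
\item Going through $\ct$, or using $\ct^b_c=\fs(\ct^c)$, only gives a homomorphism $\wt\Psi\colon\AutL(\AAA)\to\AutL(\Psi(\AAA))$ (equivalently, restriction from $\AutL(\IndC(\AAA))$). There is no candidate inverse, because the only way back from $\ct^b_c$ to $\ct^c$ is contravariant: $(\ct^c)\op\iso\fs\big((\ct^b_c)\op\big)$.
\end{itemize}
The missing ingredient is \autoref{P4.5}. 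It gives an explicit isomorphism $\SSS\to\Psi(\Psi(\SSS)\op)\op$, built from a second restricted Yoneda functor into $\IndC(\Psi(\SSS)\op)$. It needs the metric on $\ct^c$ to be excellent, and this is exactly where coherence enters (\autoref{L4.3}, \autoref{excellencebycompacts}). It is also natural in $\SSS$ (\autoref{actualmapagainagainagain}). The paper feeds that naturality into the same conjugation argument you describe, in order to compare $\AutL(\SSS)$ with $\AutL(\Psi(\SSS))$.

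Apart from this, your mechanism is the paper's (\autoref{Proallisomorphisms}). A natural restricted-Yoneda isomorphism $\BB\to\Theta\big(\Phi^?_{\cb,\ca}(\BB)\big)$, with $\Theta\in\{\Delta,\LL,\Psi\}$, makes $\wt\Theta\comp\Res$ an isomorphism while $\Res\comp\wt\Theta=\id$. It works for $\ct^c\subset\tsb\subset\ct^-\subset\ct$, for $\ct^b\subset\ct^+$, and for $\ct^c\subset\ct^b_c$ under hypothesis (ii).

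Your chain, however, is wrong as written. The inclusion $\tsb\subset\ct^b$ holds if and only if $\ct^c\subset\ct^b$ (\autoref{rmk:TsbTb}), and this again fails for $\Ho{\Spe}$. For the same reason, no pair $\ct^c\subset\cb$ reaches $\ct^b$ or $\ct^+$. These two must be linked to the rest through $\ct^b\subset\ct^-$ or $\ct^+\subset\ct$. There the reconstruction is contravariant: $(\ct^-)\op=\Hc(\LL(\SSS\op))$ with $\SSS$ the enhancement of $\ct^b$, resp.\ $\ct\op=\Hc(\LL(\SSS\op))$ with $\SSS$ the enhancement of $\ct^+$ (\autoref{E2.5}(iii),(iv)). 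So for these pairs the bigger enhancement is not recovered inside $\IndC$ of the smaller one by the covariant restricted Yoneda functor; its opposite is recovered inside $\IndC$ of the opposite. Your naturality argument therefore has to be dualized there. This step is routine, but it is not what you wrote.
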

In \autoref{exmallisomorphisms}
we will see that this theorem applies
to the following three cases:
\be
\item
$\ct=\Ho\Spe$, the homotopy category of
spectra. 
\item
$\ct=\Dqcs Z(X)$, where 
$X$ is a noetherian scheme and $Z$ is a closed subset.
\item
$\ct=\Dqc(X)$, where $X$ is a quasi-compact and quasi-separated scheme.
\ee
Thus with $\ct$ any of the above, for any
$\ct^\star_\Box\in
\{\ct^c,\ct^b_c,\tsb,\ct^b,\ct^-,\ct^+,\ct\}$
and with $\TT^\star_\Box$ being its unique
enhancement, the group $\AutL(\TT^\star_\Box)$ is
independent of the decoration.

There is of course the natural comparison
map
$\Comp\colon\AutL(\TT^\star_\Box)\la\Aut(\ct^\star_\Box)$,
which at some point was optimistically
conjectured to be an isomorphism. 
In relatively few cases this conjecture has
been proved; the sum of what is known
is that the map
$\Comp\colon\AutL(\TT^c)\la\Aut(\ct^c)$
is an isomorphism 
for $\ct=\Dqc(X)$, provided $X$ is a scheme proper
over a field $\kK$ and of depth $\geq 1$. See the
proof of \autoref{Literature} for a summary of
how to extract this from the literature.
In combination with
\autoref{thm:main6} this tells us that,
with $\ct=\Dqc(X)$ and $X$ still satisfying
the restrictive hypothesis above, the maps 
$\Comp\colon\AutL(\TT^\star_\Box)\la\Aut(\ct^\star_\Box)$,
are split injective for all decorations
other than $\ct^-_c$. And as the $\AutL(\TT^\star_\Box)$
are all isomorphic, so are the images of
the injective maps into $\Aut(\ct^\star_\Box)$. This answers a question by Evgeny Shinder.

\subsection*{Structure of the paper}

As we have already pointed out, the main idea in this paper is to combine techniques coming from higher categories, with those arising in the theory of weakly approximable triangulated categories and metrics on them. Thus we begin with a review of both fields, and somewhat expand on what is known.

In particular, \autoref{subsec:istr} provides a quick recap on \tstr s on triangulated categories, and \autoref{subsec:gen} reminds the reader what it means for a triangulated category to be generated, possibly by compact objects.

These are the key ingredients in the definition of weakly approximable triangulated categories which comes in \autoref{sec:wa}. \autoref{subsec:wadef} contains the precise definition and all the examples needed in this paper. With one exception the triangulated subcategories of a weakly approximale triangulated category, which are the main protagonists in this work, are recalled in \autoref{ex:waexample}. The one exception, being mostly new, is introduced and studied in \autoref{subsec:waTsb} and \autoref{subsec:functorTsb};
it happens to be technically important to our development.
And then \autoref{subsect:geomcase} works out
what this category comes down to, concretely,
in the case where $\ct=\Dqcs Z(X)$.

The discussion of metrics is postponed until
\autoref{subsec:metrics}. Even though the
subject of metrics is enhancement-free, after
\autoref{sec:wa} we switch gears and 
in \autoref{sec:algebraictria} we enter
into the realm of
higher categorical models.
In \autoref{subsec:generalmod} we briefly recap the various levels of generality which we can adopt when defining stable $\infty$-categories, and compare them to dg categories. In \autoref{subsec:uniqenh} we formalize the notion of uniqueness of enhancements and prove some general and useful results. \autoref{subsec:uniqenhex} deals with the crucial examples in this paper, while \autoref{subseq:stronguniq} discusses the strong version of uniqueness, with examples.

\autoref{S974} and \autoref{S2} are the key sections where we develop the new techniques which are used to prove some crucial parts of \autoref{thm:main2} and \autoref{thm:main3}. We investigate the interplay between enhancements and good or excellent metrics. The first important applications, to weakly approximable triangulated categories, come in \autoref{subsec:applwa}.

\autoref{S4} is the culmination of this paper. In \autoref{subsec:constr} and \autoref{subsec:backforth} we develop the necessary machinery in order to relate the enhancements of $\ct^c$ and $\ct^b_c$ and of $\tsb$ and $\ct^b$. In \autoref{subsec:waen} we prove \autoref{thm:main2} and in \autoref{subsec:lifting} we prove \autoref{thm:main3}.

The geometric and topological applications of our main results are contained in \autoref{sect:applications}. In particular, \autoref{subsect:uegeo} deals with the extensions of all known results about uniqueness of enhancements in the geometric context. \autoref{subsect:extspectra} deals with the applications of \autoref{thm:main3} and proves our extension of Rickard's result and the results about a conjecture on the homotopy category of spectra. Finally, \autoref{subsect:autoequiv} deals with our results comparing autoequivalence groups.

\subsection*{Notation}

In the remainder of the paper, we denote by $\kK$ the commutative ring over which the (triangulated, stable $\infty$, dg, etc.) categories or the schemes are linear. We write $R$ or $S$ for arbitrary and possibly noncommutative rings.

\section{Preliminaries on \tstr s and generation}\label{sec:wepreltstrgen}

This section reviews the basic definitions concerning \tstr s, as well as the notion of generation in triangulated categories. We also introduce the three basic examples which will recur throughout the paper.

\subsection{Reminder on \tstr s}\label{subsec:istr}

Recall the following standard definition.

\begin{defin}\label{def:tstr}
Let $\ct$ be a triangulated category. A \emph{\tstr}\ on $\ct$ is a pair
of strictly full subcategories $\tau=(\ct^{\leq 0},\ct^{\geq 1})$ satisfying:
\begin{itemize}
\item[(i)]
$\sh{\ct^{\leq 0}}\subset\ct^{\leq 0}$ and $\sh[-1]{\ct^{\geq 1}}\subset\ct^{\geq 1}$.
\item[(ii)]
$\Hom(\ct^{\leq 0},\ct^{\geq 1})=0$.
\item[(iii)]
For any object $B\in\ct$ there exists a distinguished triangle $A\la B\la C\la\sh{A}$, 
with $A\in\ct^{\leq 0}$ and $C\in\ct^{\geq 1}$. 
\end{itemize}
\end{defin}

In the setting above, for all $n\in\ZZ$ we
define the following
strictly full subcategories
\[
\ct^{\leq n}:=\sh[-n]{\ct^{\leq 0}}\ ,\qquad\ct^{\geq n}:=\sh[-n+1]{\ct^{\geq 1}}\ .
\]
\autoref{def:tstr}(iii) generalizes to state that, for any object $B\in\ct$ and any integer $n\in\ZZ$, there exists a distinguished triangle $A\la B\la C\la\sh{A}$
with $A\in\ct^{\leq n}$ and $C\in\ct^{\geq n+1}$. Such a triangle
is unique up to canonical isomorphism, and defines the \emph{truncation functors}
\[
\tau^{\leq n}\colon\ct\lto\ct^{\leq n}\qquad\tau^{\geq n}\colon\ct\lto\ct^{\geq n},
\]
for all $n\in\ZZ$, where $\tau^{\leq n}(B)=A$ and $\tau^{\geq n+1}(B)=C$. It is a standard result that $\tau^{\leq n}$ is the right adjoint to the inclusion $\ct^{\leq n}\hookrightarrow\ct$, while $\tau^{\geq n}$ is the left adjoint to the inclusion $\ct^{\geq n}\hookrightarrow\ct$.

\rmk{rmk:tstreq}
If $\fF\colon\ct\la\ct'$ is a triangle equivalence and $\tau=(\ct^{\leq 0},\ct^{\geq 1})$ is a \tstrs on $\ct$, then $\fF(\tau):=(\fF(\ct^{\leq 0}),\fF(\ct^{\geq 1}))$ is a \tstrs on $\ct'$.
\ermk

\exm{ex:basicex}
In this paper it will be important to keep in mind three main examples of triangulated categories with arbitrary coproducts.

(i) Let $R$ be a ring. The category $\D(\MMod{R})$ is the derived category of the Grothendieck abelian category $\MMod{R}$ of (right) modules over $R$.

(ii) Let $X$ be a quasi-compact and quasi-separated scheme, and let $Z\subset X$ be a closed subscheme such that the complement $X\setminus Z$ is quasi-compact. The triangulated category $\Dqcs{Z}(X)$ is the derived category,
where the objects are the (unbounded) complexes of $\OO_X$-modules, with quasi-coherent cohomology supported on $Z$.

(iii) Finally, we can consider the homotopy category of spectra $\ho{\Spe}$, which is the localization of the category $\Spe$ of spectra by the weak equivalences. See \cite[Section 1.2]{CNS25} for a little more detail.

Each of these triangulated categories comes with a standard \tstr, which we denote by $\tau_\text{stan}$. While the standard \tstr{s} in (i) and (ii) are classical, and the general theory developed in \cite[Chapter~1]{BeiBerDel82} can be viewed as formalizing the key properties of these old constructions, the existence of the standard \tstrs of (iii) was only discovered a decade later in \cite[Proposition~1.4]{Neeman92C}. More explicitly, the \tstrs on $\ho{\Spe}$ is
defined by taking $\ho{\Spe}^{\leq 0}$ to consist of the \emph{coconnective spectra,} and then taking its right orthogonal in order to form $\ho{\Spe}^{\geq 1}$. Once again, the reader can find a little more detail in \cite[Section 2.3]{CNS25}.
\eexm

The class of \tstr s on a triangulated category $\ct$ is endowed with an equivalence relation. Two \tstr s $\tau_1=(\ct_1^{\leq 0},\ct_1^{\geq 1})$ and $\tau_2=(\ct_2^{\leq 0},\ct_2^{\geq 1})$ on $\ct$ are declared \emph{equivalent} if there exists an integer $N\geq 0$ such that
\[
\ct_2^{\leq -N}\subset\ct_1^{\leq 0}\subset \ct_2^{\leq N}.
\]
Later, we will need the following.

\dfn{def:chartstr}
A \tstrs $\tau=(\ct^{\leq 0},\ct^{\geq 1})$ on a triangulated category $\ct$ is 
\emph{characteristic} if, for any triangulated autoequivalence
$\fF\colon\ct\la\ct$, the \tstr s $\tau$ and $\fF(\tau)$ are equivalent.
\edfn

\subsection{Generation in triangulated categories}\label{subsec:gen}

Next we recall a few basic constructions, which start with some subcategories of a triangulated category and build up others. This technique will be useful when we discuss various notions of ``generation'', and will allow us to define the ``preferred'' equivalence class of \tstr{s}.

Let $\ca$ and $\cb$ be full subcategories of a triangulated category $\ct$. We can perform the following constructions:
\begin{itemize}
\item
$\ca*\cb\subset\ct$ is the full subcategory of all objects $C\in\ct$ for which there exists a distinguished triangle $A\la C\la B$ with $A\in\ca$ and $B\in\cb$.
\item
$\add(\ca)\subset\ct$ is the full subcategory whose objects are all finite direct sums of objects of $\ca$. In case $\ct$ has (set indexed) coproducts, then $\Add(\ca)\subset\ct$ is the full subcategory whose objects are all the small coproducts of objects of $\ca$. 
\item
$\Smr(\ca)\subset\ct$ is the full
subcategory with objects all direct summands of objects of $\ca$.
\setcounter{enumiv}{\value{enumi}}
\end{itemize}
Using these operations, we can define the full subcategory $\COprod(\ca)$ to be the smallest full subcategory $\cs\subset\ct$ satisfying
\[
\ca\subset\cs,\qquad\cs*\cs\subset\cs,\qquad\add(\cs)\subset\cs.
\]
Set 
\[
\gen\ca{}:=\Smr\left(\COprod(\ca)\right).
\]

When $\ct$ has coproducts, which is the most interesting case for the purposes of this paper, we declare $\Coprod(\ca)$ to be the
smallest full subcategory $\cs\subset\ct$ satisfying
\[
\ca\subset\cs,\qquad\cs*\cs\subset\cs,\qquad\Add(\cs)\subset\cs.
\]
And by analogy with the construction above, we set
\[
\ogen\ca{}:=\Smr\left(\Coprod(\ca)\right).
\]

When $\ca$ consists only of some specified shifts of a given object $G$, we adopt the following notation, where $m\leq n$ are in $\ZZ\cup\{\pm\infty\}$:
\[
G[m,n]:=\{\sh[i]{G}\st i\in\ZZ\text{ and } m\leq -i\leq n\}\qquad \genu G{}{m,n}:=\gen{G[m,n]}.
\]
If $\ct$ has coproducts we can perform the
analoguous ``big'' constructions 
\[
\ogenu G{}{m,n}:=\ogen{G[m,n]}.
\]

Next we recall:

\dfn{definitionofcompacts}
Let $\ct$ be a triangulated category.
An object $C\in\ct$ is \emph{compact} if, given any set $\{D_\lambda\}_{\lambda\in\Lambda}$ of objects in $\ct$ such that $\coprod_{\lambda\in\Lambda}D_\lambda$ exists, the natural morphism
\begin{equation}\label{eq:cmpt}
\coprod_{\lambda\in\Lambda}\Hom(C,D_\lambda)\lto\Hom\bigg(C,\coprod_{\lambda\in\Lambda}D_\lambda\bigg)
\end{equation}
is an isomorphism.
\edfn

The full subcategory $\ct^c$ of $\ct$, consisting of all compact objects, is easily seen to be a triangulated
subcategory.
If $\ct$ has coproducts, we say that $\ct$ is \emph{compactly generated} by $G\in\ct^c$ if $\ct={\ogen G{}}^{[-\infty,+\infty]}$. This is equivalent to $\ct^c$ being \emph{classically generated} by $G$, which means that $\ct^c={\gen G{}}^{[-\infty,+\infty]}$.

\exm{ex:compgen}
Let us revisit the three examples in \autoref{ex:basicex}.

(i) It is easy to show that $R$ is a compact generator for $\D(\MMod{R})$. Moreover, $\D(\MMod{R})^c\iso\K^b(\proj{R})$, where the latter is the homotopy category of bounded complexes of finitely generated projective $R$-modules.

(ii) In \cite[Theorem 6.8]{Rouquier08} it is proved that $\Dqcs{Z}(X)$ is compactly generated by a single compact generator and that, furthermore, $\Dqcs{Z}(X)^c=\dperfs Z {X}$. The latter category is the full subcategory of $\Dqcs{Z}(X)$ consisting of perfect complexes with cohomology supported on $Z$. A complex in $\Dqc(X)$ is \emph{perfect} if it is locally quasi-isomorphic to a bounded complex of locally free sheaves with finite rank.

(iii) The triangulated category $\ho{\Spe}$ is also compactly generated by a single generator; this compact generator may be taken to be the \emph{sphere spectrum}.
\eexm

This allows us to define a special class of
\tstr s. Let $\ct$ be a triangulated category with small coproducts and consider an essentially small full subcategory $\ca\subset\ct^c$. Consider the pair of full subcategories
\[
\tau_\ca:=\big(\Coprod(\ca),\Coprod(\ca)^\perp\big).
\]
Here $\Coprod(\ca)^\perp=\{A\in\ct\st\Hom(S,A)=0\text{ for all }S\in\Coprod(\ca)\}$. It was proved in \cite[Theorem~A.1 and Proposition~A.2]{Alonso-Jeremias-Souto03} (see also \cite[Theorem 2.3.3]{Canonaco-Neeman-Stellari24} for a different proof) that, if $\sh{\ca}\subset\ca$, the pair $\tau_\ca$ is a \tstrs on $\ct$.

If $G$ is a compact generator for $\ct$ and $\ca:=\{\sh[i]{G}:i\in\ZZ_{\geq 0}\}$, then the equivalence class of the \tstrs $\tau_G:=\tau_\ca$ is independent of $G$ and is called the \emph{preferred equivalence class} of \tstr s on $\ct$.

\exm{ex:basicextstr}
Combining \autoref{ex:basicex} and \autoref{ex:compgen}, the three triangulated categories $\D(\MMod{R})$, $\Dqcs{Z}(X)$ and $\ho{\Spe}$ are compactly generated by a single compact object, and each is endowed with a standard \tstr. It follows from an easy computation for (i), from \cite[Theorem 3.2]{Neeman22A} for (ii) and from \cite[Example 5.2]{Neeman24} for (iii) that, in each of the three cases, the standard \tstrs is in the preferred equivalence class.
\eexm

\section{Weakly approximable triangulated categories and their subcategories}\label{sec:wa}

In this section we recall the crucial notion of weakly approximable triangulated categories, see \autoref{subsec:wadef}. We prove some basic, general properties of such categories, and in \autoref{subsec:waTsb} we study in some detail a special subcategory $\tsb$ of a weakly approximable triangulated category $\ct$. This subcategory was introduced relatively recently, but we will show that it shares many of the good properties of its older cousins. The proof of the first main result \autoref{thm:main1} is in \autoref{subsec:waTsb}. This section ends with an explicit description of $\tsb$ in the geometric situation, see \autoref{subsec:functorTsb} and \autoref{subsect:geomcase}.

\subsection{The definition}\label{subsec:wadef}

Weakly approximable triangulated categories were introduced in \cite{Neeman24} and, since then, they turned out to be pervasive in algebra and geometry. Let us begin with the formal definition.

\dfn{def:wa}
A triangulated category with coproducts $\ct$ is
\emph{weakly approximable} if there exist an integer $A>0$,
a compact generator $G\in\ct^c$ and a \tstrs
$\tau=\tst\ct$ on $\ct$, such that
\begin{itemize}
\item[(i)]
$G\in\ct^{\leq A}$ and $\Hom(G,\ct^{\leq-A})=0$.
\item[(ii)]
For any object $F\in\ct^{\leq0}$ there exists in $\ct$ a distinguished triangle
$E\la F\la D\la\sh{E}$ with $E\in\ogenu G{}{-A,A}$ and with
$D\in\ct^{\leq-1}$.
\end{itemize}
\edfn

A triple $(G,\tau,A)$ satisfying (i) and (ii) in \autoref{def:wa} will be referred to as \emph{weak approximation data} for $\ct$. Weakly approximable triangulated categories are already known to have in common a slew of interesting, useful  properties, and this article is about extending this further.

\rmk{rmk:wabasicprop}
(i) From \cite[Remark 5.9]{Levy-Sosnilo25} we learn that \autoref{def:wa} (ii) is not redundant. Parts of the theory can be developed without it, but not all.

(ii) If $\ct$ is a weakly approximable triangulated category with weak approximation data $(G,\tau,A)$, then the \tstrs $\tau$ is in the preferred equivalence class (see \cite[Proposition 3.4]{Neeman24}).

(iii)  Let $\ct$ be a weakly approximable triangulated category. Let $H$ be a compact generator of $\ct$ and let $\tau$ be a \tstrs in the preferred equivalence class. Then there exists an integer $B>0$ such that $(H,\tau,B)$ is a weak approximation data (see \cite[Proposition 3.6]{Neeman24}).
\ermk

It is worth noting that the weak approximability of
a compactly generated triangulated category $\ct$
can be checked inside the subcategory $\ct^c$ of compact objects in $\ct$. We formulate this as:

\pro{prop:ax2}
Let $\ct$ be a compactly generated triangulated category with coproducts. Then the following are equivalent:
\begin{itemize}
\item[(a)] The category $\ct$ is weakly approximable.
\item[(b)] There is a classical generator $G$ of $\ct^c$ and a positive integer $B$ such that:
\begin{enumerate}
\item[{\rm (i')}] $\Hom(G,\sh[n]G)=0$ for all $n\geq B$.
\item[{\rm (ii')}] For any object $F\in\genul G{}0$ there exists in $\ct^c$ a distinguished triangle
$E\la F\la D\la\sh{E}$ with $E\in\genu G{}{-B,B}$ and with
$D\in\genul G{}{-1}$.
\end{enumerate}
\end{itemize}
\epro

\prf
Suppose (a) holds
and $G$ is a classical generator
for $\ct^c$. Then $G$ is
a compact generator in $\ct$, and
$\tau_G=\tstv\ct G$ is
a \tstrs in the preferred
equivalence class.
By
\cite[Proposition 3.6]{Neeman24}.
there exists an integer $B>0$ such that
$\Hom\left(\sh[-B]G,\ct_G^{\leq0}\right)=0$, and
\cite[Lemma 3.12]{Neeman24} permits
us, possibly at the cost of
increasing the integer $B$, to also
guarantee that every object
$F\in\ct^c\cap\ct_G^{\leq0}$ admits in
$\ct^c$ a triangle
$E\la F\la D\la\sh{E}$
with $E\in\genu G{}{-B,B}$ and
with $D\in\ct^c\cap\ct_G^{\leq-1}$.
Now observe that, by definition, $\ct_G^{\leq0}=\ogenul G{}0$. Therefore we have
\[
G[-\infty,0]\subset\ct_G^{\leq0}
\qquad\text{ and }
\qquad
\ct^c\cap\ct_G^{\leq0}=\ct^c\cap\ogenul G{}0=\genul G{}0\ ,
\]
where the last equality is by
\cite[Proposition 1.9]{Neeman17}.
The vanishing of
$\Hom\left(\sh[-B]G,\ct_G^{\leq0}\right)$
implies the vanishing of
$\Hom(G,\sh[n]G)$ for all $n\geq B$,
and the conditions on
$F$ and on $D$ in the triangle
$E\la F\la D\la\sh{E}$
rewrite as in (b). We have proved that
(a) implies (b).

For the reverse implication assume (b) holds
and we want to prove (a). And (a) has two
parts, (i) and (ii)
of \autoref{def:wa}.

We have already observed that any
classical generator $G\in\ct^c$ is
a compact generator of $\ct$.
The vanishing of
$\Hom(G,\sh[n]G)=0$ for all $n\geq B$
implies the vanishing of
$\Hom\left(\sh[-B]G,\ct_G^{\leq0}\right)$
by \cite[Remark 1.24]{Neeman24}, and we
obviously have $G\in\ct^{\leq0}_G$.
Thus (i) holds, and
it remains to prove that (ii) also holds.

Let $F$ be an object in $\ct_G^{\leq 0}$. Consider the $0$-th cohomology object $\mathcal{H}^0_{\tau_G}(F)\in\tsth\ct G$, where the cohomology functor is the one with respect to the \tstrs $\tau_G$. By \cite[Lemma 3.2.1]{Canonaco-Neeman-Stellari24}, there is a set $\{C_i\}_{i\in I}\subset\ct^c\cap\ct_G^{\leq 0}$ and an epimorphism $f\colon \coprod_{i\in I}\mathcal{H}^0_{\tau_G}(C_i)\lto\mathcal{H}^0_{\tau_G}(F)$. By \cite[Lemma 3.1.2]{Canonaco-Neeman-Stellari24}, we can replace $\{C_i\}_{i\in I}$ with a collection $\{B_i\}_{i\in I}\subset\ct^c\cap\ct_G^{\leq 0}=\genul G{}0$, with isomorphisms $\ph_i\colon \mathcal{H}^0_{\tau_G}(B_i)\la \mathcal{H}^0_{\tau_G}(C_i)$, and do it in such a way that the map $f$ above may be realized as
$f\iso \mathcal{H}^0_{\tau_G}(g)$ for some
$g\colon \coprod_{i\in I}B_i\to F$. Put 
$B:=\coprod_{i\in I}B_i$.

Applying (ii') to each $B_i$ gives, for each $i\in I$, a triangle $\sh[-1]D_i\la E_i\la B_i\la D_i$
with $E_i\in\genu G{}{-B,B}$ and with
$D_i\in\genul G{}{-1}$. The coproduct of
these all is a triangle
$\sh[-1]D\la E\la B\la D$, with $D\in\ct^{\leq-1}_G$
and with $E\in\ogenu G{}{-B,B}$.
The long exact sequence in cohomology,
for the functor $\mathcal{H}^0_{\tau_G}$ applied to
this triangle, gives $\mathcal{H}^0_{\tau_G}(E)\la\mathcal{H}^0_{\tau_G}(B)\la\mathcal{H}^0_{\tau_G}(D)=0$,
that is the map $h:E\la B$ is such that $\mathcal{H}^0_{\tau_G}(h)$ is an epimorphism.
But we also know that
$\sh[-1]D,B$ both belong to $\ct^{\leq0}_G$, and
the triangle $\sh[-1]D\la E\la B$ teaches us
that
$E\in\ct^{\leq0}_G*\ct^{\leq0}_G\subset\ct^{\leq0}_G$.

Now consider the composite map $E\stackrel h\la B\stackrel g\la F$. The functor $\mathcal{H}^0_{\tau_G}$ takes
both $g$ and $h$ to epimorphisms, and hence
it takes the composite to an epimorphism.
Complete the composite to a triangle
$E\la F\la \wt D\la \sh E$.
Then $\wt D$ is an extension of
$F,\sh E$, both of which
lie in $\ct^{\leq0}_G$. Hence $\wt D\in\ct^{\leq0}_G$.
But the long exact sequence
$\mathcal{H}^0_{\tau_G}(E)\la\mathcal{H}^0_{\tau_G}(F)\la\mathcal{H}^0_{\tau_G}(\wt D)\la\mathcal{H}^1_{\tau_G}(E)=0$, plus the surjectivity of
$\mathcal{H}^0_{\tau_G}(E)\la\mathcal{H}^0_{\tau_G}(F)$,
tells us that $\mathcal{H}^0_{\tau_G}(\wt D)=0$
and therefore $\wt D\in\ct^{\leq-1}_G$. The
triangle
$E\la F\la \wt D\la \sh E$,
with $E\in\ogenu G{}{-B,B}$ and
with $\wt D\in\ct^{\leq-1}_G$,
proves that (ii) holds.
\eprf

\rmk{rmk:apprsim}
As in \cite[Definition~1.25]{Neeman24}, one can study the subclass of approximable triangulated categories. And it is true that the approximable variant of
\autoref{prop:ax2} is true. That is we assert that
the following are equivalent:
\begin{itemize}
\item[(a)] The category $\ct$ is approximable.
\item[(b)] There is a classical generator $G$ of $\ct^c$ and a positive integer $B$ such that:
\begin{enumerate}
\item[{\rm (i')}] $\Hom(G,\sh[n]G)=0$ for all $n\geq B$.
\item[{\rm (ii'')}] For any object $F\in\genul G{}0$ there exists in $\ct^c$ a distinguished triangle
$E\la F\la D\la\sh{E}$ with $E\in\genu G{B}{-B,B}$ and with
$D\in\genul G{}{-1}$.
\end{enumerate}
\end{itemize}
In the rest of the current article this
plays no role: we have not introduced the notation
$\genu G{B}{-B,B}$,  have not recalled the definition
of approximable triangulated categories (as opposed to weakly approximable ones), and approximable triangulated categories
will not appear again. Hence we leave the proof
to the interested expert, with the observation
that the argument is a minor modification of
the proof of \autoref{prop:ax2}.
\ermk

A weakly approximable triangulated category $\ct$ comes with a list of natural, full triangulated subcategories. Each of these subcategories is defined using some (any) \tstrs $\tst\ct$ in the preferred equivalence class, possibly in conjunction with the subcategory $\ct^c$ of compact objects in $\ct$. The list of subcategories studied so far goes as follows:
\begin{itemize}
\item \emph{Bounded above objects}: $\ct^-:=\bigcup_{m=1}^\infty\ct^{\leq m}$;
\item \emph{Bounded below objects}: $\ct^+:=\bigcup_{m=1}^\infty\ct^{\geq-m}$;
\item \emph{Bounded objects}: $\ct^b:=\ct^-\cap\ct^+$,
\item \emph{Compact objects}: $\ct^c$;
\item \emph{Pseudo-compact objects}: $\ct^-_c$, where an object $F\in\ct$ belongs to $\ct^-_c$ if, for any integer $m>0$, there exists in $\ct$ a distinguished triangle $E\la F\la D$ with $E\in\ct^c$ and with $D\in\ct^{\leq-m}$. Thus
\[
\ct^-_c=\bigcap_{m=1}^\infty\big(\ct^c*\ct^{\leq-m}\big);
\]
\item \emph{Bounded pseudo-compact objects}: $\ct^b_c:=\ct^-_c\cap\ct^b$.
\item \emph{Bounded compact objects}: $\ct^{c,b}:=\ct^c\cap\ct^b=\ct^c\cap\ct^b_c$.
\end{itemize}

\exm{ex:waexample}
The three triangulated categories in \autoref{ex:basicex} are all weakly approximable. Restricting attention to the first two: the weak approximability of $\D(\MMod{R})$ is an easy exercise, while the weak approximability of $\Dqcs{Z}(X)$ is \cite[Theorem 3.2 (iv)]{Neeman22A}. In these two cases, all the subcategories listed above can be described explicitly:

\medskip

\begin{center}
	\begin{tabular}{|c|c|c|}

		\hline
		& $Z\subset X$ as in \autoref{ex:basicex}  & $R$ a ring\\
		\hline
		$\ct$ & $\Dqcs Z(X)$ & $\D(\MMod{R})$\\
		$\ct^-$ & $\Dqcmis Z(X)$ & $\D^-(\MMod{R})$\\
		$\ct^+$ & $\Dqcpls Z(X)$ & $\D^+(\MMod{R})$\\
		$\ct^b$ & $\Dqcbs Z(X)$ & $\D^b(\MMod{R})$\\
		$\ct^c$ & $\dperfs ZX$ & $\dperf{R}$\\
		$\ct^-_c$ & $\Dqcps Z(X)$ & $\K^-(\proj{R})$\\
		$\ct^b_c$ & $\Dqcpbs Z(X)$ & $\K^{-,b}(\proj{R})$\\
  $\ct^{c,b}$ & $\dperfs ZX$ & $\dperf{R}$\\
		\hline
	\end{tabular}
\end{center}

\medskip

The weak approximability of $\ho{\Spe}$ is proved in \cite[Example 5.2]{Neeman24}. The peculiarity of this triangulated category is that $\ho{\Spe}^{c,b}=\{0\}$.
\eexm

The aim of \cite{Canonaco-Neeman-Stellari24} was to show that all the natural inclusions of the triangulated subcategories above are canonical, in an appropriate sense. We will review and extend this result slightly in the next section.

\subsection{The new categories and their intrinsicness as subcategories}\label{subsec:waTsb}

For the purposes of this paper, we will need to enlarge a little the list of subcategories in \autoref{subsec:wadef}. In \cite[Definition 8.5]{Neeman25} a new member was introduced. We revisit it here in more detail under the standing assumption for the entire section that $\ct$ is a weakly approximable triangulated category.
 
\dfn{D0.7}
Let $\tst\ct$ be a \tstr\ in the preferred equivalence class. Then we set
\[
\tsb:=\ct^-\cap\left(\bigcup_{n=1}^\infty{^\perp\ct^{\leq-n}}\right)\ .
\]
\edfn

\rmk{rmK:canonTsb}
Clearly $\tsb$ is well defined as a subcategory of $\ct$,
meaning that it does not depend on the choice of the \tstrs in the preferred equivalence class. But by
\cite[Proposition 6.1.3]{Canonaco-Neeman-Stellari24}
the preferred equivalence class of \tstr{s} on
$\ct^-$ is intrinsic, and hence up to equivalence the
subcategories $\ct^-\cap{^\perp\ct^{\leq-n}}\subset\ct^-$
are intrinsic. Therefore their union
$\tsb\subset\ct^-$ is intrinsic.
\ermk

The reader who compares
\autoref{D0.7} above with
\cite[Definition 8.5]{Neeman25}
will discover what looks like a
discrepancy, but the lemma below
shows that the two definitions agree.

\lem{L0.9}
Let $G\in\ct$ be a compact generator. Then
\[
\tsb=\bigcup_{n=1}^\infty\ogenu G{}{-n,n}\ .
\]
\elem

\prf
Fix a \tstrs $\tst\ct$ in the preferred equivalence class and choose an integer $A>0$ with $\Hom(G,\ct^{\leq-A})=0$. Clearly
\[
\ogenu G{}{-n,n}\subset\ct^-\cap{^\perp\ct^{\leq-n-A}}\ ,
\]
and hence
\[
\bigcup_{n=1}^\infty\ogenu G{}{-n,n}\subset
\ct^-\cap\left(\bigcup_{n=1}^\infty{^\perp\ct^{\leq-n}}\right)=\tsb\ .
\]
For the reverse inclusion we use weak approximability. Let
$F\in\tsb\subset\ct^-$ be any object. After shifting we may assume that $F\in\ct^{\leq0}$.
By \autoref{D0.7}
we may also choose an integer $n>0$ with
$F\in{^\perp\ct^{\leq-n}}$. By \cite[Corollary 3.2]{Neeman24} there exists a distinguished triangle
$E\la F\la D$ with $E\in\ogenu G{}{-n-A+1,A}$
and with $D\in\ct^{\leq-n}$, but $F\in{^\perp\ct^{\leq-n}}$
says that the map $F\la D$ must vanish.
Hence $F$ must be a direct summand of $E$, and
\[
F\in\Smr\left(\ogenu G{}{-n-A+1,A}\right)
\subset\ogenu G{}{-n-A+1,n+A-1}\ .
\]
This concludes the proof.
\eprf

\cor{C0.9specialcasering}
If $R$ is a ring and $\ct=\D(\MMod R)$,
then $\tsb$ identifies with
$\K^b(\Proj R)$, the subcategory of
bounded complexes of projective $R$-modules.
\ecor

\prf
Recalling that $R\in\D(\MMod R)$ is a compact
generator, the
Corollary follows immediately from
the formula
\[
\D(\MMod R)\SB=\bigcup_{n=1}^\infty\ogenu R{}{-n,n}
\]
of \autoref{L0.9}.
\eprf

For the case where $\ct=\Dqcs Z(X)$,
we will come back to the computation
of $\tsb$ in
\autoref{ex:Tsbrins}.

\dfn{Dsmallsubcat}
We introduce yet another subcategory to the mix, 
setting
\[
\tsbb:=\tsb\cap\ct^b\ .
\]
\edfn

\rmk{rmk:TsbTb}
If $\ct^c\subset\ct^b$ then in particular $G\in\ct^b$,
and hence for any $n>0$ we have $\ogenu G{}{-n,n}\subset\ct^b$.
Taking the union over $n>0$ and applying
\autoref{L0.9} gives that $\tsb\subset\ct^b$,
and therefore $\tsbb=\tsb\cap\ct^b=\tsb$.
\ermk

Let us give yet another description of the
subcategory $\tsb\subset\ct^-$.

\lem{lem:anotherformula}
The following formula holds:
\[
\tsb=\ct^-\cap\bigcup_{n=1}^\infty{^\perp(\ct^b\cap\ct^{\leq-n})}\ .
\]
\elem

\prf
We will prove the inclusions
\[
^\perp\ct^{\leq-n}
\subset
{^\perp(\ct^b\cap\ct^{\leq-n})}
\subset
{^\perp\ct^{\leq-n-1}}\ ,
\]
and the equality of the Lemma follows from
\autoref{D0.7} by taking the union over
$n>0$ and intersecting with $\ct^-$.
Of course, the first inclusion comes from
$\ct^b\cap\ct^{\leq-n}\subset\ct^{\leq-n}$
by taking orthogonals. It remains to
prove the second inclusion.

Suppose therefore that $C$ is an object
in ${^\perp(\ct^b\cap\ct^{\leq-n})}$ and
$D$ is an object of $\ct^{\leq-n-1}$,
and we need to prove that $\Hom(C,D)=0$.
Now by \cite[Proposition~4.2]{Neeman24}
the category $\ct$ is left-complete, meaning
that there is a (non-canonical) isomorphism
$D\iso\holim D^{\geq-\ell}$. Thus there
exists a distinguished triangle
\[\xymatrix@C+0pt{
\ds\prod_{\ell=1}^\infty
\sh[-1]{D^{\geq-\ell}}
\ar[r]
&
D
\ar[r]
&
\ds\prod_{\ell=1}^\infty
D^{\geq-\ell}
}\]
As $D^{\geq-\ell}\in\ct^b\cap\ct^{\leq-n-1}$ for all
$\ell>0$, we have that $\Hom(C,-)$ annihilates
both $D^{\geq-\ell}$ and $\sh[-1]{D^{\geq-\ell}}$.
From the triangle we learn that it annihilates $D$.
\eprf

\cor{theintrinsictsbcintb}
The inclusion $\tsbb\subset\ct^b$
is intrinsic, and more concretely we have the
formula
\[
\tsbb=\ct^b\cap\bigcup_{n=1}^\infty{^\perp(\ct^b\cap\ct^{\leq-n})}\ .
\]
\ecor

\prf
The formula for 
$\tsbb=\tsb\cap\ct^b$ comes
from intersecting with $\ct^b$ the
formula for $\tsb$ of
\autoref{lem:anotherformula}.
And the intrinsic nature of
$\ct^\text{\rm sb,b}$, as a 
subcategory of $\ct^b$,
follows from
\cite[Proposition~6.1.8]{Canonaco-Neeman-Stellari24},
which tells us that (up to equivalence) the
subcategories
$\ct^b\cap\ct^{\leq-n}\subset\ct^b$ have an
intrinsic description.
\eprf

Next we prove a couple of lemmas which will help us to show that the only subcategory in the list in \autoref{subsec:wadef} which is always naturally contained in $\tsb$ is $\ct^c$ and the inclusion is intrinsic.

\lem{L0.11}
The inclusion $\tsb\hookrightarrow\ct$ respects coproducts.
More precisely, if we are given a collection of
objects $\{D_\lambda\}_{\lambda\in\Lambda}$
and an object $C\in\tsb$ that satisfies
the universal property of the
coproduct $\coprod_{\lambda\in\Lambda}D_\lambda$ in $\tsb$, then there exists an integer $n>0$ such that
$C$ and all the $D_\lambda$'s
lie in $\ogenu G{}{-n,n}$ (where $G\in\ct$ is a compact generator),
and $C$ satisfies the universal
property of the coproduct $\coprod_{\lambda\in\Lambda}D_\lambda$ in $\ct$. 
\elem

\prf
By \autoref{L0.9}, there exists an integer
$n>0$ such
that $C\in\ogenu G{}{-n,n}$. And,
as the objects $D_\lambda$ are all
direct summands of $C$, they
must all belong to $\ogenu G{}{-n,n}$.

But now $\ogenu G{}{-n,n}$ is closed in
$\ct$ under coproducts, and hence the
coproduct $E=\coprod_{\lambda\in\Lambda}D_\lambda$ in $\ct$ must
belong to $\ogenu G{}{-n,n}\subset\tsb$.
However, since $E$ satisfies the universal property of
a coproduct in the large category $\ct$,
it also satisfies it in the subcategory $\tsb$.
Thus $C\iso E$.
\eprf

In passing we include the following consequence.

\cor{generationTsb}
Let $\ct$ be a weakly approximable triangulated category. Then any subcategory
of $\cs\subset\tsb$, which contains $\ct^c$
and is closed in $\tsb$ under those
coproducts that exist in $\tsb$,
is equal to all of $\tsb$.
\ecor

\prf
The category $\cs$ contains $G\in\ct^c$,
and hence contains $\ogenu G{}{-n,n}$ for
every integer $n>0$;
after all the extensions involved
in forming $\ogenu G{}{-n,n}$
come from
triangles in $\tsb$, and all the coproducts
happen inside $\tsb$. Hence the union
of the $\ogenu G{}{-n,n}$ is
contained in $\cs$, but is equal to $\tsb$.
\eprf

The following is the crucial technical result that we need.

\lem{L0.13}
The inclusion $\ct^c\subset\big(\tsb\big)^c$ holds.

Furthermore, if $C\in\big(\tsb\big)^c$, $G\in\ct$ is a compact generator and $n>0$ is an integer, then every map $C\la Y$ with $Y\in\ogenu G{}{-n,n}$ factors as $C\la D\la Y$ with $D\in\genu G{}{-n,n}$.
\elem

\prf
The inclusion
\[
\ct^c=\gen G{}=\bigcup_{n>0}\genu G{}{-n,n}
\subset \bigcup_{n>0}\ogenu G{}{-n,n}=\tsb
\]
tells us that all the objects
that are compact in $\ct$ belong
to $\tsb\subset\ct$.
But by \autoref{L0.11}
the inclusion $\tsb\hookrightarrow\ct$ respects
coproducts, and hence if an object
in $\tsb$ is compact in
$\ct$ it is obviously compact in $\tsb$. Thus
we have the inclusion
$\ct^c\subset\big(\tsb\big)^c$.

Now, for the second part of the statement, we can proceed as follows.
Suppose that we are given an integer $n>0$.
We define the full
subcategory $\cy\subset\ogenu G{}{-n,n}$
by the rule
\[
\text{Ob}(\cy)=\left\{
E\in\ogenu G{}{-n,n}\st
\begin{array}{c}
\text{every map $C\la E$ with $C\in\big(\tsb\big)^c$ factors}\\
\text{as $C\la D\la E$ with $D\in\genu G{}{-n,n}$}
\end{array}\right\}\ .
\]
It clearly suffices to show that $\cy=\ogenu G{}{-n,n}$.

Obviously any $\sh[i]G$, with $-n\leq i\leq n$, must
belong to $\cy$. Or in symbols $G[-n,n]\subset\cy$.
Thus it suffices to show that $\cy$ is closed
in $\ct$ under coproducts, direct summands and
extensions. And closure under direct summands is easy.

Now take any set $\{E_\lambda\}_{\lambda\in\Lambda}$
of objects in $\cy\subset\ogenu G{}{-n,n}$.
The coproduct exists in $\ct$ and belongs to
$\ogenu G{}{-n,n}\subset\tsb$, and hence satisfies
the universal property of the coproduct in $\tsb$.
But given an object $C\in\big(\tsb\big)^c$ and a
morphism $C\la\coprod_{\lambda\in\Lambda}E_\lambda$,
the compactness of $C$, as an object
in $\tsb$, guarantees that it factors
through a finite subcoproduct.
But then because the $E_\lambda$ all belong
to $\cy$ we may further factor as
\[
\xymatrix{
C\ar[r] & \ds\coprod_{\lambda\in\Lambda'}D_\lambda\ar[r]
& \ds\coprod_{\lambda\in\Lambda'}E_\lambda\ar[r]
& \ds\coprod_{\lambda\in\Lambda}E_\lambda
}
\]
In this factorization $\Lambda'\subset\Lambda$ is
a finite set, and the finitely many $D_\lambda$
belong to $\genu G{}{-n,n}$.
This proves that the subcategory $\cy$ is closed in $\ct$
under coproducts.

To complete the proof, it remains to show that the subcategory $\cy$ is closed in $\ct$ under extensions. To this purpose, we apply
\cite[Lemma~1.6 and Remark~1.7]{Neeman17} with
\[
\cs=\big(\tsb\big)^c,\qquad
\ca=\cc=\genu G{}{-n,n},\qquad\cx=\cz=\cy.
\]
Observe that $\cc=\genu G{}{-n,n}\subset\ct^c$,
and by the first assertion of the current
lemma $\ct^c\subset\big(\tsb\big)^c=\cs$.
Combining the inclusions gives
$\cc\subset\cs$, and $\cs$ is triangulated.
Hence \cite[Remark~1.7]{Neeman17} applies.
Because $\cx=\cy$ and
by the definition of $\cy$, any map $S\la B$,
with $S\in\cs$ and with $B\in\cx$, factors as
$S\la A\la B$ with $A\in\ca$.
Similarly any map $S\la D$, with $S\in\cs$ and
with $D\in\cz$, factors as $S\la C\la D$ with
$C\in\cc$. The hypotheses
of \cite[Lemma~1.6 and Remark~1.7]{Neeman17}
all hold, and any map $S\la E$, with $E\in\cy*\cy$,
must factor as $S\la F\la E$ with
$F\in\genu G{}{-n,n}*\genu G{}{-n,n}\subset\genu G{}{-n,n}$.
This completes the proof that the
subcategory $\cy$ is closed in $\ct$
under extensions.
\eprf

We are now ready to prove that the inclusion $\ct^c\hookrightarrow\tsb$ is intrinsic.

\pro{C0.15}
The following equalities hold: $\big(\tsb\big)^c=\tsb\cap\ct^-_c=\ct^c$, $\tsb\cap\ct^b_c=\ct^{c,b}$.
\epro

\prf
The inclusion $\ct^c\subset\big(\tsb\big)^c$ is
the first part of \autoref{L0.13}.
We need to prove the reverse inclusion.

Choose therefore any object $C\in\big(\tsb\big)^c$.
Because $C$ belongs to
$\big(\tsb\big)^c\subset\tsb=\bigcup_{n>0}\ogenu G{}{-n,n}$,
there must exist an integer $n$ with
$C\in\ogenu G{}{-n,n}$. Therefore
the identity map $\id:C\la C$ is
a morphism from $C\in \big(\tsb\big)^c$ to
$C\in\ogenu G{}{-n,n}$, and
by the second part of
\autoref{L0.13} it must factor through
some object $D\in\genu G{}{-n,n}$. Thus $C$ is a
direct summand of $D$, but the category
$\genu G{}{-n,n}$ is closed under direct
summands. Hence $C\in\genu G{}{-n,n}\subset\ct^c$, and therefore $\big(\tsb\big)^c=\ct^c$.

In order to prove that $\tsb\cap\ct^-_c=\ct^c$, the nontrivial inclusion to show is $\tsb\cap\ct^-_c\subset\ct^c$. Thus pick $C\in\tsb\cap\ct^-_c$. By \autoref{D0.7}, there exists a positive integer $n$ such that $C\in{^\perp\ct^{\leq-n}}$. On the other hand, since $C\in\ct^-_c$,  there is a distinguished triangle $E\la C\la D$, where $E\in\ct^c$ and $D\in\ct^{\leq-n}$. Hence
the map $C\la D$ vanishes, and $C$ is a direct summand of $E$ and is therefore compact.

The statement $\tsb\cap\ct^b_c=\ct^{c,b}$ is by intersecting with $\ct^b$ the equality $\tsb\cap\ct^-_c=\ct^c$.
\eprf

Let us now reconsider the natural inclusions in \eqref{eq:incl}.
We can extend \cite[Theorem B]{Canonaco-Neeman-Stellari24} to include $\tsb$ and its natural subcategory $\tsbb$. More precisely, the following is true:

\thm{thm:passage}
If $\ct$ is a weakly approximable triangulated category, then all the inclusions in the diagram \eqref{eq:incl}, represented by the solid arrows $\ca\hookrightarrow\cb$, are \emph{invariant under triangle equivalences}. By this we mean that given a pair of weakly approximable triangulated
categories $\ct,\ct'$, as well as matching inclusions
$\ca\hookrightarrow\cb\hookrightarrow\ct$ and
$\ca'\hookrightarrow\cb'\hookrightarrow\ct'$ from the diagram \eqref{eq:incl},
then any triangle equivalence $\cb\la\cb'$ must restrict to a triangle equivalence $\ca\la\ca'$.

The same is true for the inclusion $(\ast)$ in the diagram \eqref{eq:incl}, provided we further assume that one of the two conditions below holds:
\be
\item $\ct,\ct'$ are coherent or
\item $\ct^c\subset\ct^b_c$, ${\ct'}^c\subset{\ct'}^b_c$ and
$^\perp(\ct^b_c)\cap\ct^-_c=\{0\}={^\perp({\ct'}^b_c)}\cap{\ct'}^-_c$.
\ee

Finally, the inclusion $(\diamond)$ is invariant under triangle equivalences if $\ct^c\subset\ct^b_c$ and ${\ct'}^c\subset{\ct'}^b_c$.
\ethm

Recall the following, which is just \cite [Definition \ref*{D29.1}]{Neeman18A}.

\dfn{def:cohtria}
A weakly approximable triangulated category $\ct$ is \emph{coherent} if, for any
\tstrs $\tst\ct$ in the preferred equivalence class, there
exists an integer $N>0$ such that every
object $D\in\ct^-_c$ admits a distinguished triangle
$C\la D\la E$ with
$C\in\ct^-_c\cap\ct^{\leq N}$ and with $E\in\ct^b_c\cap\ct^{\geq0}$.
\edfn

\begin{proof}[Proof of \autoref{thm:passage}]
In view of \cite[Theorem B]{Canonaco-Neeman-Stellari24}, we only have to deal with the inclusions involving $\tsb$  and $\ct^\text{{\rm sb},b}$. Now $\tsb\hookrightarrow\ct^-$ is intrinsic by \autoref{rmK:canonTsb}. The case of $\ct^c\hookrightarrow\tsb$ follows from \autoref{C0.15} which also shows that we do not need to deal with $\tsb\cap\ct^-_c$ and $\tsb\cap\ct^b_c$. 
The inclusion $\ct^{\text{{\rm sb}},b}\subset\ct^b$ is intrinsic by \autoref{theintrinsictsbcintb}.

Under the assumption that $\ct^c\subset\ct^b_c$ we have  $\ct^{\text{{\rm sb}},b}=\tsb$, and thus $(\diamond)$ is intrinsic because it coincides with the inclusion $\ct^c\subset\tsb$.

It remains to prove that the inclusion $\ct^{\text{{\rm sb}},b}\subset\tsb$ is intrinsic. Note that, if
$G$ is a compact generator of $\ct$ and
$\tau_G=\tstv\ct G$ is the \tstrs it generates,
then
\[
\ct^+
=\bigcup_{n=1}^\infty\ct_G^{\geq-n+1}
=\bigcup_{n=1}^\infty\big(\ct_G^{\leq-n}\big)^\perp
=\bigcup_{n=1}^\infty G[-\infty,-n]^\perp
\]
where the perpendiculars are understood in the category $\ct$. But $G$ is a classical generator of the category $\ct^c$, which is intrinsic in $\tsb$. Intersecting with $\tsb$ gives
\[
\tsbb=\tsb\cap\ct^+=
\bigcup_{n=1}^\infty \left(\tsb\cap G[-\infty,-n]^\perp\right)\ ,
\]
and the expression on the right is clearly
intrinsic as a subcategory of $\tsb$.
\end{proof}

\exm{ex:coherent}
(i) It is not hard to produce examples of weakly approximable triangulated categories $\ct$ that are coherent. The easiest one is when $R$ is a coherent ring and $\ct=\D(\MMod{R})$. By \cite[Example 5.7]{Neeman18A}, the homotopy category of spectra $\Ho\Spe$ is also coherent.

(ii) By \cite[Proposition 10.2.1]{Canonaco-Neeman-Stellari24}, If $X$ is a quasi-compact and quasi-separated scheme with a closed subscheme $Z\subset X$ such that $X\setminus Z$ is quasi-compact, then $\Dqcs Z(X)$ satisfies (ii) in \autoref{thm:passage}.
\eexm

For later use, let us introduce the following definition.

\dfn{def:triachar}
A triangulated subcategory $\cs\subset\ct$ of a triangulated category $\ct$ is  
\emph{characteristic} if any triangulated autoequivalence
$\ct\la\ct$ restricts to an autoequivalence $\cs\la\cs$.
\edfn

\rmk{chartria}
Let $\ct$ be a weakly approximable triangulated category and let $\cb\hookrightarrow \ca$ be a
pair of subcategories in the diagram \eqref{eq:incl}
satisfying the restrictions (if any) imposed in
\autoref{thm:passage}. The conclusion
of \autoref{thm:passage} obviously implies that $\cb$ is a characteristic subcategory of $\ca$.
\ermk

\subsection{Functoriality of $\tsb$}\label{subsec:functorTsb}

Suppose $\fF\colon\cs\la\ct$ is an exact functor
of weakly approximable
triangulated categories.
We might reasonably ask for conditions
under which $\fF$ takes $\cs^\text{\rm sb}$ into
$\tsb$, and the useful result will be:

\lem{L0.75689}
Let $\fF\colon\cs\la\ct$ be an exact functor
of weakly approximable
triangulated categories, with a
right adjoint $\fG\colon\ct\la\cs$.
Choose \tstr{s}
$\tst\cs$ and $\tst\ct$ in the
(respective) preferred
equivalence classes.
Suppose furthermore that
there exists an integer $n>0$ with
$\fF\big(\cs^{\leq0}\big)\subset\ct^{\leq n}$.
Then $\fF$ takes
$\cs^\text{\rm sb}$ into
$\tsb$ if and only if
the integer
$n>0$ may be increased to also satisfy
$\fG\big(\ct^{\leq0}\big)\subset\cs^{\leq n}$.
\elem

\prf
Choose compact generators $H\in\cs$ and $K\in\ct$,
and choose an integer $A>0$ such that
$H\in\cs^{\leq A}$ and $\Hom\big(H,\cs^{\leq-A}\big)=0$.
And then, by applying
\cite[Lemma~3.9(iv)]{Burke-Neeman-Pauwels18},
we may increase $A>0$ to guarantee that it
also satisfies $H[0,\infty]^\perp\subset\cs^{\leq A}$.
Hence first of all we have
\[
\fF(H)\in\fF\big(\cs^{\leq A}\big)\subset\ct^{\leq A+n}\subset\ct^-\ .
\]

Next, if $\fF$ takes 
$\cs^\text{\rm sb}$ into
$\tsb$ then $\fF(H)\in\tsb$,
and there must exist an integer
$B>0$ with $\fF(H)\in{^\perp\ct^{\leq-B}}$, and
by adjunction $H\in {^\perp \fG\big(\ct^{\leq-B}\big)}$.
But then
\[
\fG\big(\ct^{\leq-B}\big)
\subset
H[0,\infty]^\perp\subset\cs^{\leq A}\ .
\]

Now suppose that the integer $n>0$ may be increased
to satisfy 
$\fG\big(\ct^{\leq0}\big)\subset\cs^{\leq n}$.
Then the inclusions
\[
\fG\big(\ct^{\leq0}\big)\subset\cs^{\leq n}\subset\sh[-n-A]{H^\perp}
\]
tell us, by adjunction, that
\[
\ct^{\leq0}\subset\sh[-n-A]{\fF(H)^\perp},
\]
which says that $\fF(H)$ belongs to
$\ct^-\cap{^\perp\ct^{\leq-n-A}}\subset\tsb$.
By \autoref{L0.9} there exists an integer
$B>0$ with $\fF(H)\in\ogenu K{}{-B,B}$.  
But the functor $\fF$ is exact and has a right
adjoint, and hence respects coproducts, and
combining with \autoref{L0.9}
this allows us to show
\begin{align*}
\fF(\cs^\text{\rm sb})&=\fF\left(\bigcup_{\ell=1}^\infty\ogenu H{}{-\ell,\ell}\right)\subset\bigcup_{\ell=1}^\infty\ogenu {\fF(H)}{}{-\ell,\ell}\\
&\subset\bigcup_{\ell=1}^\infty\ogenu {K}{}{-\ell-B,\ell+B}=\tsb\ ,
\end{align*}
concluding the proof.
\eprf

\subsection{The geometric case}\label{subsect:geomcase}

Let $f:X\la Y$ be a morphism
of quasi-compact, quasi-separated schemes.
Then the functor $\LL f^*:\Dqc(Y)\la\Dqc(X)$
is exact and has a right
adjoint $\R f_*:\Dqc(X)\la\Dqc(Y)$, 
and for the standard \tstr{s} there
exists an integer $n>0$ with
\[
\LL f^*\big(\Dqc(Y)^{\leq0}\big)
\subset\Dqc(X)^{\leq 0}
\quad\text{ and }\quad
\R f^*\big(\Dqc(X)^{\leq0}\big)
\subset\Dqc(Y)^{\leq n}\ .
\]
From \autoref{L0.75689} we deduce that
$\LL f^*$ takes $\Dqc(Y)^\text{\rm sb}$
into $\Dqc(X)^\text{\rm sb}$.

\lem{lem:localityoftsb}
Let $X$ be a quasi-compact, quasi-separated scheme,
and assume that $X=U\cup V$, with $U$ and $V$ quasi-compact open subsets. Let $u:U\la X$ and $v:V\la X$ be the open immersions. Then the combination of the
two inclusions
\[
u^*\Dqc(X)\SB\subset\Dqc(U)\SB
\quad\text{ and }\quad
v^*\Dqc(X)\SB\subset\Dqc(V)\SB\ ,
\]
which we rewrite as the single inclusion
$\Dqc(X)\SB\subset(u^*)^{-1}\Dqc(U)\SB\cap(v^*)^{-1}\Dqc(V)\SB$, is an  equality.
That is: an object
$C\in\Dqc(X)$
belongs to
$\Dqc(X)\SB$ if and only if
$u^*(C)\in\Dqc(U)\SB$
and
$v^*(C)\in\Dqc(V)\SB$.
\elem

\prf
The direction that
needs proof is that, if an object
$C\in\Dqc(X)$ is such that
$u^*(C)\in\Dqc(U)\SB$
and
$v^*(C)\in\Dqc(V)\SB$,
then
$C\in\Dqc(X)\SB$. Now note that
$u^*(C)\in\Dqc(U)^-$
and
$v^*(C)\in\Dqc(V)^-$,
and it is classical that
therefore
$C\in\Dqc(X)^-$.
Thus the only assertion that needs proof is that,
if 
$u^*(C)\in{^\perp\Dqc(U)^{\leq0}}$
and
$v^*(C)\in{^\perp\Dqc(V)^{\leq0}}$,
then there exists an integer
$n>0$ with
$C\in{^\perp\Dqc(X)^{\leq-n}}$.

Set $Z=X\setminus U$, which is a closed subset of $X$ with
quasi-compact complement, contained in
the quasi-compact open subset $V$.  Next recall that, given any object $G\in\Dqc(X)$, 
we may complete the unit of adjunction $\eta:\id\la\R u_*u^*$
to the triangle 
\[\xymatrix@C+0pt{
F
\ar[r]
&
G
\ar[r]^-{\eta}
&
\R u_*u^*(G)
\ar[r]
&
\sh F
}\]
There exists an integer $n>0$ such that, if $G\in\Dqc(X)^{\leq0}$, then all three objects above
belong to $\Dqc(X)^{\leq n}$.
But $u^*(F)=0$, making $F$ an object
in
$\Dqcs Z(X)\cap\Dqc(X)^{\leq n}=\Dqcs Z(X)^{\leq n}$.
Now a classical
result (see for example \cite[Corollary~6.4]{Neeman22A})
tells us that
$\Dqcs Z(X)^{\leq n}=\R v_*\Dqcs Z(V)^{\leq n}$,
and we deduce that any object $G\in\Dqc(X)^{\leq0}$ admits a triangle
$F\la G\la H\la \sh F$ with $F\in\R v_*\Dqc(V)^{\leq n}$ and with 
$H=\R u_*u^*(G)\in\R u_*\Dqc(U)^{\leq 0}$. In symbols this proves
\[
\Dqc(X)^{\leq0}\subset\R v_*\Dqc(V)^{\leq n}\,*\,\R u_*\Dqc(U)^{\leq0}\ .
\]
But we are given that 
$u^*(C)\in{^\perp\Dqc(U)^{\leq0}}$
and
$v^*(C)\in{^\perp\Dqc(V)^{\leq0}}$,
and by adjunction
$C^\perp$ contains both $\R u_*\Dqc(U)^{\leq0}$
and $\R v_*\Dqc(V)^{\leq0}$. Therefore $C^\perp$
contains
$\Dqc(X)^{\leq-n}$.
\eprf

It follows that
belonging to
the subcategory
$\Dqc(X)\SB\subset\Dqc(X)$
is a local property, in
the following, classical sense:

\cor{cor:specialcasedqc}
Let $X$ be a quasi-compact and quasi-separated
scheme. Then 
\be
\item
For every quasi-compact
open subset $U\subset X$,
and with $u:U\la X$ the open immersion,
we have that $u^*\Dqc(X)\SB\subset\Dqc(U)\SB$.
\item
Given a finite open cover $X=U_1\cup\dots\cup U_m$,
with $U_i$ all quasi-compact,
then an object $C\in\Dqc(X)$ is in $\Dqc(X)\SB$ if and only if its pullbacks $u^*_i(C)\in\Dqc(U_i)$ are in $\Dqc(U_i)\SB$ for every $i=1,\dots,m$.
\ee
\ecor

\prf
Part (i) is a special case of
the discussion at the beginning of this section,
while Part (ii) follows
from \autoref{lem:localityoftsb}
by induction on the number $m$ of open sets in
the cover.
\eprf

\exm{ex:Tsbrins}
Now every quasi-compact scheme admits
a finite cover by affine open subsets,
and for $U$ any affine scheme 
\autoref{C0.9specialcasering}
gives an explicit description of
$\Dqc(U)\SB$.
Together with
\autoref{cor:specialcasedqc}
this tells us
that, if $X$ is a
quasi-compact and quasi-separated scheme,
$\Dqc(X)\SB$ coincides with the full subcategory 
$\Dloc(X)\subset\Dqc(X)$ consisting of objects that are locally isomorphic to bounded complexes of locally free sheaves (not necessarily of finite rank).

Now suppose $Z\subset X$ is a closed
subset with quasi-compact complement;
we can wonder what is
$\Dqcs Z(X)\SB$.
We assert the equality
\[
\Dqcs Z(X)\SB=\Dqc(X)\SB\cap\Dqcs Z(X)\ .
\]

To see this note that, working with the standard \tstr{s}, 
for any $n\in\ZZ$ we have $\Dqcs Z(X)^{\leq n}\subset\Dqc(X)^{\leq n}$, and taking perpendiculars in the category $\Dqcs Z(X)$ gives the inclusion
\[
\Dqcs Z(X)\cap{^\perp\Dqc(X)^{\leq n}}
\subset
{^\perp\Dqcs Z(X)^{\leq n}}\ .
\]
On the other hand the inclusion $\iota\colon\Dqcs Z(X)\la\Dqc(X)$ has
a right adjoint $\iota^!\colon\Dqc(X)\la\Dqcs Z(X)$, the local cohomology
functor. And there
exists an integer $B>0$ with
$\iota^!\big(\Dqc(X)^{\leq n}\big)\subset\Dqcs Z(X)^{\leq n+B}$.
This gives that
\[
\Dqcs Z(X)\cap{^\perp\Dqc(X)^{\leq n}}=\Dqcs Z(X)\cap{^\perp\iota^!\big(\Dqc(X)^{\leq n}\big)}\supset
{^\perp\Dqcs Z(X)^{\leq n+B}}\ .
\]
Intersecting these inclusions with
$\Dqcmi (X)$ and then 
taking the union over $n\in\ZZ$ gives the desired equality. Thus
\[
\Dloc_Z(X):=\Dloc(X)\cap\Dqcs Z(X)=\Dqcs Z(X)\SB.
\]
\eexm

\section{Enhancements and their uniqueness}\label{sec:algebraictria}

This section starts with a quick reminder of the language of $\infty$-categories and the notion of stable $\infty$-categories. We also sketch the relation with the older language of dg categories. This is done in \autoref{subsec:generalmod}. In \autoref{subsec:uniqenh} we define what it means for a triangulated category to have a (unique) enhancement, and discuss some basic results which will be used in the rest of the paper. A r\'esum\'e of the known cases of uniqueness of enhancements is in \autoref{subsec:uniqenhex}. Strong uniqueness of enhancements is discussed in \autoref{subseq:stronguniq}.

\subsection{Generalities}\label{subsec:generalmod}

We want to sketch some basic features of $\infty$-categories and their linearizations. We refer to \cite{Lurie09,Lurie11,Lurieundercon} for a much more complete treatment of $\infty$-categories.

\subsubsection*{Stable $\infty$-categories}
We will not repeat here the (long) definition of an $\infty$-category, for which we refer to the books mentioned above. We only recall that an $\infty$-category $\DD$ is \emph{stable} if it carries suitable additional structure that makes it closer to a triangulated category. In particular, it must satisfy a set of axioms prescribing the existence of a zero object, the existence of fibers and cofibers of morphisms, and
such that two composable morphisms form a fiber sequence if and only if they form a cofiber sequence. We denote by $\StabInfNL$ the category of stable $\infty$-categories. Let $\HoStabInfNL$ be the localization of $\StabInfNL$ with respect to $\infty$-categorical equivalences, where \emph{localization} means that there is a functor $\StabInfNL\la\HoStabInfNL$, which is initial in the class of functors taking the specified morphisms to isomorphisms.

Later we will relate these constructions to older ones, in the world of dg categories.

It follows from the axioms that
the homotopy category of a stable $\infty$-category $\DD$ comes with a canonical triangulated structure.
We will denote this homotopy category $\Hc(\DD)$, somewhat unconventionally\footnote{If $\DD$ is a stable $\infty$-category, then the standard notation for its homotopy category is $\pi_0(\DD)$. Recall that $\pi_0(\DD)$ is the triangulated category whose objects are identical with those in $\DD$, while the morphisms are given by $\Hom_{\pi_0(\DD)}(A,B):=\pi_0(\Map_{\DD}(A,B))$. Here $\Map_{\DD}$ denotes the mapping space in $\DD$, and $\pi_0(X)$ stands for the zeroth stable homotopy group of $X$. Since in this paper we follow cohomological conventions, we will write $\Hc$ in place of $\pi_0$, and more generally we let $\mathrm{H}^i=\pi_{-i}$ for all $i\in\ZZ$.}. For later use note that, for a given $f\in\Hom_{\Hc(\DD}(A,B)$, a \emph{lift} of $f$ means a $g\in\Map_{\DD}(A,B)$ such that $f=\Hc(g)$.

\exm{ex:stablecat}
All the examples of triangulated categories of interest in the paper, namely those listed in \autoref{ex:basicex} and their natural subcategories an in the diagram \eqref{eq:incl}, can naturally be realized as homotopy categories of stable $\infty$-categories.
\eexm

We denote by $\PrCat_{\stable}$ the category of presentable stable $\infty$-categories, with morphisms the cocontinuous functors---meaning those functors which preserve small colimits. Furthermore, we let $\PrCat_{\stable}^\omega$ denote the category of compactly generated stable $\infty$-categories, with morphisms the cocontinuous functors that preserve compact objects. For us it is crucial that one can define two functors
\begin{equation*}
    \IndNL \colon \StabInfNL \lto \PrCat_{\stable}^\omega \qquad 
    (-)^c \colon \PrCat_{\stable}^\omega \lto \StabInfNL. 
\end{equation*}
given by Ind-completion and passage to compact objects. Roughly, the Ind-completion of a stable $\infty$-category provides a completion under filtered colimits. It is important to note that, if we stick to idempotent complete stable $\infty$-categories, then the two functors are mutually inverse. For every $\DD\in\StabInfNL$ the \emph{Yoneda functor}
\[
\YonInf\colon\DD\lhook\joinrel\longrightarrow\IndNL(\DD)
\]
yields a fully faithful embedding. When there is no risk of confusion between this Yoneda functor and the standard one for ordinary categories, we simply write $\Yon$ for $\YonInf$.

We should also remark that the construction of the Yoneda embedding is functorial in the following sense. If $\fF\colon\DD_1\lto\DD_2$ is a functor of stable $\infty$-categories, then there is an induced functor
\[
\IndNL(\fF)\colon\IndNL(\DD_1)\lto\IndNL(\DD_2)
\]
such that $\IndNL(\fF)\comp\YonInf=\YonInf\comp\fF$ in $\HoStabInfNL$. By \cite[Proposition 5.3.5.13]{Lurie09}, if $\fF$ is exact then $\IndNL(\fF)$ has a right adjoint $\ResNL(\fF)\colon\IndNL(\DD_2)\lto\IndNL(\DD_1)$, which is uniquely determined by the assignment $\ResNL(\fF)(D)(C):=\Map_{\DD_2}(\fF(C),D)$, for all $D\in\IndNL(\DD_2)$ and all $C\in\DD_1$. 

\rmk{rmk:Indnonstab}
Later in the paper, and in particular in \autoref{prop:uniqueadditive}, we will need to consider the analogue of the Ind-completion of $\infty$-categories which are not necessarily stable. The generality of the construction we need is the one discussed in \cite[Example 3.3(d)]{Antieau18}. In particular, if $\ca$ is a small additive category, then one can consider the stable $\infty$-category $\mathrm{Fun}^\pi(\ca\op,\Spe)$ of small product preserving functors. To simplify the notation, we will denote it by $\IndNL(\ca)$ . It should be noted that, in this case, $\Hc(\IndNL(\ca))\cong\D(\MMod{\ca})$, where $\MMod{\ca}$ denotes the abelian category of finite product preserving functors from $\ca\op$ to $\MMod{\ZZ}$. The Yoneda functor continues to provide an embedding $\DD\hookrightarrow\IndNL(\DD)$. For a thorough account of this the reader is referred to \cite[Section 7.4]{BCKW}, with more in \cite{Klemenc}. For a sketch of a different approach, based on the Dold-Kan correspondence, see  \cite[Remark 7.4.10]{BCKW}; the reader might find this illuminating.
\ermk

\subsubsection*{Linear stable $\infty$-categories}

For a fixed commutative ring $\kK$, we now want to consider (stable) $\infty$-categories with a suitable $\kK$-linearization. As we will soon see, the results we have just outlined all hold in this generality.

First note that $\D(\MMod{\kK})$ is a commutative algebra object in $\PrCat_{\stable}$ via its tensor product structure. Observe that here we are sloppily using the notation $\D(\MMod{\kK})$ for its natural $\infty$-categorical enhancement. And later $\dperf{\kK}$ will stand for the natural $\infty$-categorical enhancement of the compact objects in $\D(\MMod{\kK})$.

A \emph{presentable $\kK$-linear stable $\infty$-category} is a $\D(\MMod{\kK})$-module object of $\PrCat_{\stable}$. We set
\[
    \PrCat_\kK:=\Md_{\D(\MMod{\kK})}(\PrCat_{\stable})
\]
to be the category of such linear stable $\infty$-categories.

A \emph{compactly generated $\kK$-linear stable $\infty$-category} is a $\D(\MMod{\kK})$-module object in $\PrCat_{\stable}^{\omega}$. They form the category
\[
    \PrCat_\kK^{\omega}:=\Md_{\D(\MMod{\kK})}(\PrCat_{\stable}^{\omega}). 
\]

Of course the tensor product also makes $\dperf{\kK}$ into a commutative algebra object in $\StabInfNL$. 
A \emph{small $\kK$-linear stable $\infty$-category} is a $\dperf{\kK}$-module object of $\StabInfNL$, and together they form the category
\[
    \StabInf:=\Md_{\dperf{\kK}}(\StabInfNL).
\]

As in the nonlinear case, we have functors 
\[
\IndL \colon \StabInf \lto \PrCat_\kK^{\omega} \qquad 
(-)^c \colon \PrCat_\kK^\omega \lto \StabInf.
\]
Furthermore, for any $\DD$ in $\StabInf$ we obtain a well-defined Yoneda functor $\Yon\colon\DD\hookrightarrow\IndL(\DD)$. For nonstable categories, the construction of \autoref{rmk:Indnonstab} generalizes to the $\kK$-linear world. The Yoneda functor still has the good functorial properties that we discussed above in the non-linear case, and the functor $\IndL$ still has a right adjoint $\ResL$.

We will denote by $\HoStabInf$ the localization of $\StabInf$ with respect to $\infty$-categorical equivalences. When we do not want to specify whether we are working in the linearized setting, we will use the symbols $\StabInfA$, $\HoStabInfA$ and $\IndC$. This means that $?$ can be either $\kK$ or $\emptyset$.

\subsubsection*{Dg categories}

By \cite{Cohn13,Doni24}, if $\kK$ is a commutative ring, then the theory of stable $\kK$-linear $\infty$-categories is equivalent to that of pretriangulated dg categories. The non-experts may prefer to use this more classical language. Various introductions to it are available by now; the reader is referred to \cite{BondalKapranov91,Drinfeld04,Keller94} for some highlights of the original, foundational work on the subject, to \cite{Yekutieli20} for a detailed textbook treatment, to \cite{Keller06} for an extensive survey, and to \cite{CNS25} for a bare-bone account.

\rmk{rmk:Ainfty}
Although this language will not be used in this paper we remark that, in the linear case, one can equivalently adopt the language of (pretriangulated) $A_\infty$ categories (with various notions of unit). See \cite{Keller01} for a survey of $A_\infty$ world, and \cite{COS1,COS2} for recent work on the relation with the dg one.
\ermk

Recall that a \emph{dg category} is a $\kK$-linear category $\DD$ whose morphism spaces $\Hom\left(A,B\right)$ are complexes of $\kK$-modules and, for all triples of objects $A,B,C$ in $\DD$, the composition maps $\Hom(B,C)\otimes_{\kK}\Hom(A,B)\to\Hom(A,C)$ are morphisms of complexes. There is also a natural notion of a dg functor between dg categories. If $\dgCat$ is the category of dg categories and dg functors, we can form its localization $\Hqe$ with respect to quasi-equivalences. Recall that a dg functor $\fF\colon\DD_1\to\DD_2$ is a \emph{quasi-equivalence} if it is essentially surjective and, for every pair of objects $A,B\in\DD_1$, the natural map $\Hom(A,B)\to\Hom(\fF(A),\fF(B))$ is a quasi-isomorphism of complexes. 

\rmk{rmk:HqeVSStab}
The language of (pretriangulated) dg categories may appear more manageable than that of stable $\infty$-categories, but passing to the localization $\Hqe$ is technically more involved than $\HoStabInf$. The calculus of fraction turns out to be simpler in
the passage from $\StabInf$ to the localized category $\HoStabInf$. 
\ermk

Let $\Cdg(\kK)$ stand for the dg category of complexes of $\kK$-modules.
For every dg category $\DD$ we can construct a much larger one
\[
\dgMod{\DD}:=\Hom(\DD\opp,\Cdg(\kK))
\]
whose objects are called \emph{(right) dg $\DD$-modules}. The homotopy category $\Hc(\dgMod{\DD})$ has a natural triangulated structure. Given a dg category $\DD$, there is a fully faithful dg Yoneda functor $\ydg\colon\DD\hookrightarrow\dgMod{\DD}$. A dg category $\DD$ is \emph{pretriangulated} if the essential image of $\Hc(\ydg)$ is a full triangulated subcategory of $\Hc(\dgMod{\DD})$. The property of being pretriangulated is analoguous to the one of being stable for $\infty$-categories. More precisely, the full subcategory of $\Hqe$ consisting of pretriangulated dg categories is known to be equivalent to $\HoStabInf$. This result was proved in \cite[Corollary 5.5]{Cohn13}; see also \cite[Theorem 0.1]{Doni24} and the paragraph before Meta Theorem 14 in \cite{Antieau18}.

The image of $\ydg$ is contained in the full dg subcategory $\hproj{\DD}$ of $\dgMod{\DD}$ whose objects are \emph{h-projective} dg $\DD$-modules. Recall that an object $M\in\dgMod{\DD}$ is \emph{h-projective} if $\Hom_{\Hc(\dgMod{\DD})}(M,N)=0$ for every $N\in\dgMod{\DD}$ which is acyclic (meaning that $N(A)$ is an acyclic complex for every $A\in\DD$). It is important to keep in mind that the dg category $\hproj{\DD}$ is the dg analogue of the $\infty$-category $\IndL(\DD)$ in the $\infty$-categorical setting above. And $\ydg$ is the dg replacement of the Yoneda functor into the Ind-completion.

\subsection{Uniqueness of enhancements: general results}\label{subsec:uniqenh}

In the remainder of the paper we will mostly be interested in triangulated categories admitting enhancements. We recall that there are at least two, genuinely distinct notions of enhancements.

\dfn{def:algtria}
If $?=\kK,\emptyset$ 
then an \emph{enhancement in $\StabInfA$}, of a triangulated category $\ct$, is a pair $(\TT,\fF)$, where $\TT\in\StabInfA$ and $\fF\colon\Hc(\TT)\la\ct$ is a triangle equivalence.
\edfn

By the previous discussion, the existence of an enhancement in $\StabInf$ is equivalent to the existence of a dg enhancement. Classically, triangulated categories admitting a dg enhancement are called \emph{algebraic}, while those admitting an enhancement in $\StabInfNL$ are called \emph{topological}. 

\exm{ex:dgnonalg}
(i) It is clear that if $R$ is a ring, then $\D(\MMod{R})$ has an enhancement in $\StabInfA$. The same is true for $\Dqcs Z(X)$, where $X$ is a quasi-compact and quasi-separated scheme with a closed subscheme $Z\subset X$ such that $X\setminus Z$ is quasi-compact. It is also clear that if $X$ is a noetherian scheme, then $\Dqcs Z(X)$ is coherent.

(ii) On the other hand, it is well-known that $\ho{\Spe}$ does not have a dg  enhancement, and so also no linear stable $\infty$-categorical enhancement---see, for example, \cite[Propositions 1 and 4]{Schwede10}. However, it does have natural nonlinear $\infty$-categorical enhancements.

(iii) Finally, we should not expect that all triangulated category have an enhancement. Examples are provided in \cite{MSS,Rizzardo-VanDenBergh1} (see also \cite[Section 3.2]{Canonaco-Stellari17} for an extensive discussion).
\eexm

For $?=\kK,\emptyset$ and  $\ct$ a triangulated category, we denote by $\EnhA(\ct)$ the set of equivalence classes of enhancements of $\ct$ in $\StabInfA$. Here two enhancements $(\TT_1,\fF_1)$ and $(\TT_2,\fF_2)$ of $\ct$ in $\StabInfA$ are defined to be equivalent if $\TT_1$ and $\TT_2$ are isomorphic in $\HoStabInfA$. 
And when we write $\EnhA(\Box)$ we mean that we do not, in advance,
specify the triangulated category $\ct$. Thus an element of $\EnhA(\Box)$ can be identified with an isomorphism class of objects in $\HoStabInfA$.

\dfn{def:Enh}
For $?=\kK,\emptyset$, a triangulated category $\ct$ has a \emph{unique enhancement in $\HoStabInfA$} if the set $\EnhA(\ct)$ contains exactly one element.
\edfn

We will often say simply that the given triangulated category has a unique enhancement, with the context clarifying whether we mean uniqueness in $\HoStabInf$ or in $\HoStabInfNL$.

We begin with the following easy but useful observation.

\lem{lem:enhwa}
Let $\ct$ be a weakly approximable triangulated category and take $[\SSS]\in\EnhA(\ct^c)$. Then $\Hc(\IndC(\SSS))$ is weakly approximable.
\elem

\prf
The triangulated category $\Hc(\IndC(\SSS))$ is closed under coproducts by construction. Since $\ct^c\iso\Hc(\SSS)$ and $\Hc(\SSS)\iso\Hc(\IndC(\SSS))^c$, the Lemma follows from \autoref{prop:ax2}.
\eprf

The main point of this paper is to show that, under reasonable assumptions, many of the naturally defined full triangulated subcategories of a weakly approximable triangulated category listed in \autoref{subsec:wadef} have unique enhancements. The key idea is to produce new enhancements from old ones and compare them. First of all we introduce the obvious construction.

\con{restrictenhancements}
Let $\ct$ be a triangulated category and let $\cs\subset\ct$ be
a full triangulated subcategory. We define a restriction map
$\Phi^?_{\ct,\cs}$,
taking an enhancement $(\TT,\fF)$ of $\ct$ to an enhancement
$(\SSS,\fG)=\Phi^?_{\ct,\cs}(\TT,\fF)$ of
$\cs$, where $\SSS$ is defined to be
$\SSS=\fF^{-1}(\cs)$, and $\fG$ is the restriction of $\fF$. When $\fF$ is not relevant, we will also write $\SSS=\Phi^?_{\ct,\cs}(\TT)$. Here $?=\emptyset,\kK$, depending on whether we are considering nonlinear or linear enhancements of $\ct$ (and thus of $\cs$).
\econ

We then have the following.

\lem{R1.25}
If $\cs\subset\ct$ is a characteristic subcategory, then
the assignment $\Phi^?_{\ct,\cs}$ induces a map
\[
\Phi^?_{\ct,\cs}:\EnhA(\ct)\la\EnhA(\cs), 
\]
for $?=\emptyset,\kK$.
\elem

\prf
Suppose that $(\TT_1,\fF_1)$ and $(\TT_2,\fF_2)$ are enhancements of $\ct$ such that there is an isomorphism $f\colon\TT_1\la\TT_2$ in $\HoStabInfA$,
and let
$(\SSS_i,\fG_i):=\Phi^?_{\ct,\cs}(\TT_i,\fF_i)$, for $i=1,2$.
Consider the triangulated autoequivalence $\fF:=\fF_2\comp\Hc(f)\comp\fF_1^{-1}$ of $\ct$. Since $\cs$ is characteristic, $\fF$ restricts to an autoequivalence of $\cs$, and
it follows that the essential image of $\Hc(f\rest{\SSS_1})$ coincides with $\Hc(\SSS_2)$. This proves that $\SSS_1\iso\SSS_2$ in $\HoStabInfA$.
\eprf

Sometimes the restriction map of
\autoref{restrictenhancements} might be reversible.
In \autoref{lem:uniqenhTsb}
we illustrate this phenomenon with a simple example,
but first some preparation in order to properly state the result.

\con{constrtenhTsb}
Let $\cs$ be a triangulated category
that has a classical generator. Suppose that $\SSS\in\StabInfA$ is an enhancement of $\cs$ and choose a classical generator $G$ of $\Hc(\SSS)\iso\cs$.
We define $\Delta(\SSS)\in\StabInfA$ to be a
full subcategory of $\IndC(\SSS)$.
Being a full subcategory, it suffices to specify which
objects in $\IndC(\SSS)$ belong to $\Delta(\SSS)$, but the objects of $\IndC(\SSS)$ are the same as the objects
of $\Hc(\IndC(\SSS))$. The formula is
\[
\text{Ob}(\Delta(\SSS))
\eq
\bigcup_{n=1}^\infty\ogenu G{}{-n,n}
\]
where the union is formed in the triangulated
category $\Hc(\IndC(\SSS))$.
\econ

\rmk{remarkpropcon}
The category $\Delta(\SSS)$ is clearly independent
of the choice of the classical generator $G\in\Hc(\SSS)$.
It delivers a well-defined map
\[
\Delta\colon\Enhq(\cs)\la\Enhq(\Box)\ ;
\]
all this means is that if $\SSS\iso\SSS'$ in $\HoStabInfA$, then
$\Delta(\SSS)\iso\Delta(\SSS')$ in $\HoStabInfA$.
But we do not know in advance the triangulated
category $\Box=\Hc(\Delta(\SSS))$.
\ermk

\pro{lem:uniqenhTsb}
Let $\ct$ be a weakly approximable triangulated category. Then the map
\[
\Phi^?_{\tsb,\ct^c}\colon\Enhq(\tsb)\la\Enhq(\ct^c)
\]
of \autoref{restrictenhancements} is a monomorphism.

More precisely, with
$\Delta$ the map of
\autoref{constrtenhTsb},
the composite
\[\xymatrix@C+0pt{
\Enhq(\tsb)
\ar[rr]^-{\Phi^?_{\tsb,\ct^c}}
&&
\Enhq(\ct^c)
\ar[rr]^-\Delta
&&
\Enhq(\Box)
}\]
takes to itself any isomorphism class $[\TT]$, of
objects
$\TT\in\HoStabInfA$ satisfying $\Hc(\TT)\iso\tsb$.
\epro

\begin{proof}
First of all, 
by \autoref{C0.15}
the subcategory $\ct^c\subset\tsb$ is
characteristic, and then
\autoref{R1.25} says that
that the restriction map
$\Phi^?_{\tsb,\ct^c}\colon\Enhq(\tsb)\la\Enhq(\ct^c)$
is well-defined.
Because $\ct$ is weakly approximable the
category $\ct^c$ has a classical generator,
and 
\autoref{remarkpropcon}
allows us to construct the map
$\Delta\colon\Enhq(\ct^c)\la\Enhq(\Box)$.
Choose an enhancement $\TT$ of $\tsb$, and
let $\SSS=\Phi^?_{\tsb,\ct^c}(\TT)$.
It  suffices to prove the ``more precisely''
assertion, we will produce a natural equivalence
$\TT\la\Delta(\SSS)$.

Consider the restricted Yoneda functor
$\YYon\colon\TT\la\IndC(\SSS)$, where $\YYon(T):=\Map_\TT(-,T)|_{\SSS}$ for any $T\in\TT$. It
clearly suffices
to prove
\be
\item
The map 
$\YYon\colon\TT\la\IndC(\SSS)$
factors through the full
subcategory
$\Delta(\SSS)\subset\IndC(\SSS)$.
\item
The induced map
$\TT\la\Delta(\SSS)$ is an
isomorphism in $\HoStabInfA$.
\ee
And both of these are
assertions that can be checked at
the triangulated level.
Now the objects in $\ct^c=\Hc(\SSS)$
are compact in
$\tsb=\Hc(\TT)$
by \autoref{L0.13}, and compact in
$\Hc(\IndC(\SSS))$ by construction. Hence
the functor
$\Hc(\YYon)\colon\Hc(\TT)\la\Hc(\IndC(\SSS))$
respects those coproducts which exist in
$\tsb=\Hc(\TT)$. If $G\in\ct^c$ is
a classical generator, then the inverse
image under $\Hc(\YYon)$
of the subcategory $\ogenu {\YYon(G)}{}{-n,n}\subset\Hc(\IndC(\SSS))$
contain $\ogenu G{}{-n,n}$.
Thus the inverse image of the union
contains the union, and from
\autoref{L0.9} 
we learn that
$\YYon\colon\TT\la\IndC(\SSS)$
takes $\TT$ into $\Delta(\SSS)$.
This proves (i).

To prove (ii) we need to show that
the functor $\Hc(\YYon):\Hc(\TT)\la\Hc(\Delta(\SSS))$ is
fully faithful and essentially surjective.

For the full faithfulness: 
we let
$\cl\subset\tsb$
be the full subcategory defined by
\[
\text{Ob}(\cl)\eq
\left\{
X\in\tsb=\Hc(\TT)
\left|
\begin{array}{c}
\text{For all objects }Y\in\tsb\text{ the map }\\
\Hom_{\tsb}(X,Y)\la\Hom_{\Hc(\Delta(\SSS))}(\YYon(X),\YYon(Y))\\
\text{ is an isomorphism}
\end{array}
\right.
\right\}\ .
\]
By the definition of $\YYon$ the category $\cl$ contains 
$\Hc(\SSS)=\ct^c$. It is obviously a
triangulated subcategory of $\tsb$,
and as the functor $\Hc(\YYon)$ respects
coproducts $\cl$ is also closed in $\tsb$
under those coproducts that exist in
$\tsb$. 
Appealing to \autoref{generationTsb}
we have that $\cl=\tsb$,
and hence $\Hc(\YYon):\Hc(\TT)\la\Hc(\Delta(\SSS))$
is fully faithful.

The essential surjectivity is
now obvious: by the full faithfulness of
$\YYon$ we have that 
$\ogenu G{}{-n,n}$ maps
isomorphically to $\ogenu {\YYon(G)}{}{-n,n}$.
Hence the union maps onto the union, that
is $\tsb$ maps onto $\Hc(\Delta(\SSS))$.
\end{proof}

\rmk{actualmap}
The proof shows a little more than
the statement of 
\autoref{lem:uniqenhTsb}
asserts: given any $\TT$ with $\Hc(\TT)\iso\tsb$,
the proof produces a morphism
$\TT\la\Delta\Phi^?_{\tsb,\ct^c}(\TT)$
in the category $\StabInfA$, and proves
that it is an isomorphism in $\HoStabInfA$.
What is more: this map is natural in $\TT$,
it is nothing more than a restriction of
the restricted Yoneda funtor
$\YYon\colon\TT\la\IndC(\Phi^?_{\tsb,\ct^c}(\TT))$.
This naturality will play a role in \autoref{subsect:autoequiv}.
\ermk

We recall the following well-known result---see, for example, \cite[Proposition 1.17]{Lunts-Orlov10} in the dg setting. The (classical) proof is
a simpler version of what we saw in
\autoref{lem:uniqenhTsb} above.

\lem{lem:restTTc}
Let $\ct$ be a compactly  triangulated category,
and let $\ct^c$ be the subcategory of compact
objects.
Then the map
\[\xymatrix@C-18pt{
\Phi^?_{\ct,\ct^c}
\ar@{}[r]|-{\ds:}
&
\Enhq(\ct)
\ar[rrrrr]
&&&&&
\Enhq(\ct^c)
}\]
of \autoref{restrictenhancements} is a monomorphism.

More precisely, the composite
\[\xymatrix@C-18pt{
\Enhq(\ct)
\ar[rrrrr]^-{\Phi^?_{\ct,\ct^c}}
&&&&&
\Enhq(\ct^c)
\ar[rrrrr]^-{\IndC}
&&&&&
\Enhq(\Box)
}\]
takes to itself any isomorphism class $[\TT]$, of
objects
$\TT\in\HoStabInfA$ satisfying $\Hc(\TT)\iso\ct$.
\elem

\rmk{rmk:liftTsbTc}
It should be noted that a nice application of \autoref{lem:uniqenhTsb} and \autoref{lem:restTTc}, is the following. If $\ct_1$ and $\ct_2$ are weakly approximable triangulated categories, such that $\ct_1^c\iso\ct_2^c$ and this category has a unique enhancement in $\HoStabInfA$, then $\tsb_1\iso\tsb_2$ (respectively $\ct_1\iso\ct_2$) as long as both of the  $\tsb_i$ (respectively both of the $\ct_i$) have enhancements in $\StabInfA$.
\ermk

\subsection{Uniqueness of enhancements: examples}\label{subsec:uniqenhex}

In this section we collect interesting situations, where the existing literature already proves that some of the triangulated categories described in \autoref{ex:basicex}, and/or their natural subcategories as
in diagram \eqref{eq:incl}, have unique enhancements
in the sense of \autoref{def:Enh}.

Recall that if $\ca$ is a small $\kK$-linear category then, in view of the discussion in \autoref{subsec:generalmod}, we can interpret $\ca$ as a dg category and thus as a stable $\infty$-category. Therefore it makes sense to consider the following:

\pro{prop:uniqueadditive}
If  $\ca$ is a small $\kK$-linear category, then $\Hc(\IndNL(\ca))$ and $\Hc(\IndNL(\ca))^c$ have unique enhancements in $\HoStabInfA$.
\epro

\prf
The uniqueness of enhancement in $\HoStabInf$ follows from \cite[Proposition 2.6]{Lunts-Orlov10}, it amounts to reformulating in the language of linear stable $\infty$-categories the dg-category result cited above. See the  discussion of \autoref{subsec:generalmod}.

But the proof works also in the nonlinear case. If $\DD\in\StabInfNL$ is such that there is a triangle equivalence $\fF\colon\Hc(\IndNL(\ca))\lto\Hc(\DD)$, then we can consider the full subcategory $\BB\subset\DD$ with objects the isomorphs of the $\fF(A)\in\DD$, with $A\in\ca$. Now the Hom-spaces in $\DD$ (and thus in $\BB$) are objects in the category of spectra, which has a natural \tstrs as in \autoref{ex:compgen} (iii). Hence we can form the $\infty$-category $\tau^{\leq 0}\BB$ with the same objects as $\BB$ and with truncated Hom-spaces. Then, as in the proof of \cite[Proposition 2.6]{Lunts-Orlov10}, the proof here amounts to using the quasi-equivalence $\ca\to\Hc(\BB)\leftarrow\tau^{\leq 0}\BB\to\BB$. The first functor is the one induced by $\fF$.
\eprf

\autoref{prop:uniqueadditive} implies the uniqueness of enhancements in the affine setting. After all, if $R$ is a (not necessarily commutative) $\kK$-algebra, then we let $\ca$ be the $\kK$-linear category with only a single object, whose endomorphism ring is $R$. In this case $\Hc(\IndNL(\ca))\iso\D(\MMod{R})$ and hence $\Hc(\IndNL(\ca))^c\iso\dperf{R}$. Thus \autoref{prop:uniqueadditive}
specializes to the following:

\cor{thm:uniqenhring}
If $R$ is a $\kK$-algebra, then the triangulated categories $\D(\MMod{R})$ and $\dperf{R}=\D(\MMod{R})^c$ have unique enhancements in $\HoStabInfA$.
\ecor

The following is a deeper result, that came after the partial progress in \cite{Antieau18,Canonaco-Stellari18,Lunts-Orlov10}.

\thm{thm:uniqenhangeom}
{\rm (i)} If $\ca$ is an abelian category, then the triangulated category $\D^\star(\ca)$ has a unique enhancement in $\HoStabInfA$, for $\star=b,+,-,\emptyset$.

{\rm (ii)} Let $X$ be a quasi-compact and quasi-separated scheme. Then the triangulated categories $\Dqca(X)$ and $\dperf{X}$ have unique enhancements in $\HoStabInfA$, for $\star=b,+,-,\emptyset$.
\ethm

\prf
The uniqueness of enhancements in $\HoStabInf$ may be found in \cite[Theorems A and B]{Canonaco-Neeman-Stellari21}. The uniqueness result in $\HoStabInfNL$ in (i) follows from the linear case and \cite[Meta Theorem 14]{Antieau18}.

It remains to deal with the uniqueness of enhancements in $\HoStabInfNL$ of the triangulated categories in (ii). For this we use the argument of Section 8.1 in \cite{Canonaco-Neeman-Stellari21}. The idea is simple enough: we reduce to the affine case of
\autoref{thm:uniqenhring} by using a finite open cover of $X$.
In \cite{Canonaco-Neeman-Stellari21},
the gluing of the local enhancements is performed by means of \cite[Theorem 7.4]{Canonaco-Neeman-Stellari21}, and the proof there is given in the setting of dg categories. But the argument generalizes to the $\infty$-category context: what is proved is that an enhancement of either $\Dqca(X)$ or $\dperf{X}$ is the homotopy pullback of the enhancements of the restrictions to open sets in the cover. The point being that all these restrictions are localization maps. This proof works verbatim in the nonlinear case---see \cite[Section 1.4.4]{Lurie11} for the theory of localizations of stable $\infty$-categories.
\eprf

We remark that \autoref{thm:uniqenhangeom} answers in the positive Question 8.16 (v) and (vi) in \cite{Antieau18}. Another easy consequence of
\autoref{thm:uniqenhangeom} is:

\cor{cor:uniqensupp}
If $X$ is a noetherian scheme and $Z\subset X$ is a closed subset, then $\Dqcpbs Z(X)$ has a unique enhancement in $\HoStabInfA$.
\ecor

\prf
Since $X$ is noetherian we have $\Dqcpbs Z(X)\iso\D^b(\coh_Z(X))$. The result now follows from \autoref{thm:uniqenhangeom} (i).
\eprf

The categories listed, in the uniqueness-of-enhancement results so far, all come with natural linearizations. There is in the literature one important exception,
the homotopy category of spectra. In this case, the uniqueness of enhancement refers to the $\infty$-categorical models.

\begin{theorem}[{\cite[Rigidity Theorem]{Schwede07}}]\label{thm:uniqenhspectra}
The homotopy category of spectra $\Ho{\Spe}$ and the full subcategory $\Ho{\Spe}^c$ of finite spectra have unique enhancements in $\HoStabInfNL$.
\end{theorem}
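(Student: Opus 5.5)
This result is Schwede's Rigidity Theorem, so the plan is to translate it into the language of $\HoStabInfNL$, not to reprove it. I would use \autoref{lem:restTTc} to reduce both statements to a single comparison, and then show that comparison comes down to the input Schwede supplies.

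\emph{Reduction.} Let $\SSS\in\StabInfNL$ be an arbitrary enhancement of $\Ho\Spe^c$ and write $\Spe^c$ for the standard one. By \autoref{lem:restTTc}, any enhancement $\TT$ of $\Ho\Spe$ satisfies $\TT\iso\IndNL(\Phi_{\Ho\Spe,\Ho\Spe^c}(\TT))$. Hence uniqueness for $\Ho\Spe$ follows from uniqueness for $\Ho\Spe^c$, since $\IndNL(\Spe^c)\iso\Spe$. It therefore suffices to show $\SSS\iso\Spe^c$ in $\HoStabInfNL$.

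\emph{Comparison functor.} Let $X\in\SSS$ correspond to the sphere spectrum under the given triangle equivalence $\Hc(\SSS)\iso\Ho\Spe^c$. Since $\Spe^c$ is the free idempotent-complete stable $\infty$-category on one object, $X$ determines an exact functor $\fF\colon\Spe^c\la\SSS$ with $\fF(\mathbb{S})=X$. Equivalently, it determines a cocontinuous functor $\IndNL(\fF)\colon\Spe\la\IndNL(\SSS)$. The object $X$ classically generates $\Hc(\SSS)$ and is a compact generator of $\Hc(\IndNL(\SSS))$. So $\fF$ is an equivalence as soon as it is fully faithful on $\mathbb{S}$, that is, as soon as the unit map of ring spectra $\mathbb{S}\la\End(X)$ induces isomorphisms
\[
\pi_n\mathbb{S}\lto\Hom_{\Hc(\SSS)}(\sh[n]X,X)
\]
for all $n$. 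Full faithfulness on all of $\Spe^c$ then follows by the usual d\'evissage over shifts, cones and summands, and essential surjectivity follows because $X$ generates.

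\emph{Main obstacle.} The hard step is the isomorphism on homotopy groups. The abstract triangle equivalence gives an abstract isomorphism between the two graded rings, but not that the specific unit map is an isomorphism. This is exactly the content of Schwede's argument. He works one prime $p$ at a time. Using mod-$p$ Moore objects and their triangulated behaviour (Toda brackets, and the relation $p\cdot\mathrm{id}\ne0$ on the mod-$2$ Moore spectrum), he shows that the unit map is injective and surjective, by induction on the Adams filtration or stem. I would invoke his theorem in the form ``a stable model category, or a stable $\infty$-category, whose homotopy category is triangle-equivalent to $\Ho\Spe$ is equivalent to $\Spe$'', applied to $\IndNL(\SSS)$. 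For this one must first check that $\Hc(\IndNL(\SSS))\iso\Ho\Spe$ as triangulated categories. That holds because Schwede's proof only uses the triangulated structure on the finite part generated by the sphere; if a direct check is needed, I would redo his inductive argument inside $\Hc(\SSS)$, which contains all the Moore objects involved. Combining this with the reduction finishes the proof.
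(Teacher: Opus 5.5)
The translation steps in your plan are fine. One is the reduction of the $\Ho{\Spe}$ statement to the $\Ho{\Spe}^c$ statement via \autoref{lem:restTTc}; the paper uses the same combination in its discussion of Margolis' conjecture. The other is the reduction of $\SSS\iso\Spe^c$ to bijectivity of the unit map $\pi_n\mathbb{S}\to\Hom_{\Hc(\SSS)}(\Sigma^nX,X)$. The gap is in the step that carries all the content, and as written that step is circular.

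You want to apply the rigidity theorem for the whole category to $\IndNL(\SSS)$, and you note that this requires $\Hc(\IndNL(\SSS))\iso\Ho{\Spe}$. For an arbitrary enhancement $\SSS$ of $\Ho{\Spe}^c$, that is precisely Margolis' conjecture for the enhanced category $\Hc(\IndNL(\SSS))$. Granted the full rigidity theorem, it is equivalent to the conclusion $\SSS\iso\Spe^c$ you are after: if $\IndNL(\SSS)\iso\Spe$, then $\SSS\iso\IndNL(\SSS)^c\iso\Spe^c$, since $\SSS$ is idempotent complete. The paper obtains this Margolis statement for enhanced categories as a \emph{consequence} of the theorem, not as an input. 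Your justification, that it ``holds because Schwede's proof only uses the triangulated structure on the finite part'', does not verify the hypothesis. It is the claim that a different, stronger theorem holds, namely rigidity under the sole assumption $\Hc(\SSS)\iso\Ho{\Spe}^c$.

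What you need is that compact-objects form of Schwede's rigidity theorem, stated and cited as such. This is what the paper does when it attributes the $\Ho{\Spe}^c$ half of the statement to Schwede. Once you have it, the detour through $\IndNL(\SSS)$ and the full form of the theorem is unnecessary, and the $\Ho{\Spe}$ half follows from your \autoref{lem:restTTc} reduction.

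Your alternative, redoing the argument inside $\Hc(\SSS)$, is only a promise. It is also not clear it can be kept inside $\Hc(\SSS)$: anything phrased via Adams filtration uses the mod-$p$ Eilenberg--MacLane object. That object is not compact, so it lives in $\Hc(\IndNL(\SSS))$, exactly where the identification with $\Ho{\Spe}$ is unavailable. A small slip: the relevant relation on the mod-$2$ Moore spectrum is $2\cdot\mathrm{id}\neq0$.
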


We will revisit and improve these results in \autoref{sect:applications}. For the moment, note that uniqueness of enhancements is not true in general for triangulated categories. Examples are contained in \cite{Rizzardo-Symons-VanDenBergh,Rizzardo-VanDenBergh2,Schlichting02}.

\subsection{Strong uniqueness of enhancements}\label{subseq:stronguniq}

In this section, we consider a stronger notion of uniqueness of enhancement. 

First, we fix a variant of the notation $\EnhA(\ct)$
introduced right before \autoref{def:Enh}. If $\ct$ is a triangulated category and $?=\kK,\emptyset$, we denote by $\EnhStrA(\ct)$ the set of equivalence classes of enhancements of $\ct$ in $\StabInfA$. What changes, when we refer to $\EnhStrA(\ct)$ rather than $\EnhA(\ct)$, is the equivalence relation: in the case of $\EnhStrA(\ct)$, two enhancements $(\TT_1,\fF_1)$ and $(\TT_2,\fF_2)$ of $\ct$ in $\StabInfA$ are defined to be equivalent if there is an isomorphism $f\colon\TT_1\to\TT_2$ in $\HoStabInfA$ such that $\fF_1\iso\fF_2\comp\Hc(f)$. Recall that, in forming $\EnhA(\ct)$, two enhancements are declared equivalent if there exists an isomorphism  $f\colon\TT_1\to\TT_2$ in $\HoStabInfA$, with no compatibility condition with $\fF_1$ and $\fF_2$.

\dfn{def:struniqenh}
A triangulated category $\ct$ has \emph{a strongly unique enhancement in $\HoStabInfA$}, for $?=\kK,\emptyset$, if the set $\EnhStrA(\ct)$ contains exactly one element.
\edfn

A key example, of strong uniqueness of enhancements in a geometric context, is the following result. We give here the best known version due essentially to Olander \cite{OlanderThesis}, building on partial results by Lunts and Orlov who deal with projective schemes (see \cite[Theorem 9.9]{Lunts-Orlov10}) and some key ideas from \cite{Canonaco-Stellari14} which we will recall in the course of the proof of \autoref{prop:struniqex} and in \autoref{thm:suppCS}. In the statement of the result, we say that a scheme $X$ has depth $\ge1$ if the stalk $\OO_{X,x}$ has depth $\geq 1$, for any closed point $x\in X$. This is equivalent to $T_0(\OO_X)=0$, where $T_0(\OO_X)$ denotes the maximal $0$-dimensional torsion subsheaf of $\OO_X$.

\begin{prop}\label{prop:struniqex}
If $X$ is a scheme proper over a field $\kK$, such that $X$ has depth $\geq 1$, then $\D^b(\coh(X))$ has a strongly unique linear enhancement.
\end{prop}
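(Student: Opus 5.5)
The plan is to reduce strong uniqueness to a statement about autoequivalences, and then to use the fact that autoequivalences in this setting are of Fourier--Mukai type. Ordinary uniqueness is already known: by \autoref{thm:uniqenhangeom}(i), applied to the abelian category $\coh(X)$, any two linear enhancements $(\TT_1,\fF_1)$, $(\TT_2,\fF_2)$ of $\D^b(\coh(X))$ admit an isomorphism $f\colon\TT_1\to\TT_2$ in $\HoStabInf$. The defect in compatibility is measured by the triangulated autoequivalence
\[
\Phi:=\fF_2\comp\Hc(f)\comp\fF_1^{-1}\colon \D^b(\coh(X))\lto\D^b(\coh(X)).
\]
Strong uniqueness therefore follows if we can find an automorphism $g$ of $\TT_2$ in $\HoStabInf$ with $\fF_2\comp\Hc(g)\iso\Phi^{-1}\comp\fF_2$. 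Indeed, then $g\comp f$ satisfies $\fF_2\comp\Hc(g\comp f)\iso\fF_1$. It is enough to do this for one fixed enhancement $\TT_2=\TT$, say the standard dg enhancement of $\D^b(\coh(X))$. So the claim reduces to the following: every exact autoequivalence of $\D^b(\coh(X))$ is isomorphic to $\Hc$ of an autoequivalence of the standard enhancement.

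For the standard enhancement we use Fourier--Mukai functors. A kernel $\ce\in\D^b(\coh(X\times X))$ (or more generally a perfect-on-each-factor kernel in $\Dqc(X\times X)$) defines a dg quasi-functor whose $\Hc$ is $\Phi_\ce$. If $\Phi_\ce$ is an equivalence, then the quasi-functor is an isomorphism in $\HoStabInf$, because this can be checked on $\Hc$. So it remains to show that every exact autoequivalence $\Phi$ is isomorphic to some $\Phi_\ce$. This is the step where the properness and depth hypotheses enter.

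To prove that $\Phi$ is Fourier--Mukai, follow the Canonaco--Stellari / Olander strategy.
\begin{itemize}
\item First pass to the dg enhancement transported along $\Phi$. Because enhancements of $\D^b(\coh(X))$, $\dperf{X}$ and $\Dqc(X)$ are unique, one can compare $\Phi$ with an actual dg quasi-functor on some piece of the category.
\item Then use an ample sequence $\{\OO_X(-n)\}$ and its quasi-coherent extension. One shows that the restriction of $\Phi$ to $\dperf{X}$, which is intrinsic in $\D^b(\coh(X))$ once depth $\geq1$ holds (see the Olander characterization of perfect complexes as homologically finite objects), is isomorphic on the full subcategory of the ample sequence to the restriction of some dg quasi-functor $\Phi_\ce$.
\item The uniqueness criterion of Canonaco--Stellari then applies: two exact functors that agree on an ample sequence in a compatible way, and that are suitably bounded, are isomorphic. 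This yields $\Phi\iso\Phi_\ce$ on $\dperf{X}$. The extension to $\D^b(\coh(X))$ goes via $\Dqc(X)$ by compact generation, and uses that both functors commute with homotopy colimits.
\end{itemize}

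The hypothesis of depth $\geq1$ is used exactly to exclude the vanishing of $\Hom(\OO_X(-n),T_0)$-type phenomena. It guarantees that the objects $\OO_X(-n)$, together with the vanishing $\Hom(\OO_X(-n),P)=0$ for all $n\gg0$, detect zero and determine a morphism. This is what the Canonaco--Stellari uniqueness argument needs.

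I expect the main obstacle to be this last step, namely upgrading an isomorphism of functors on the ample sequence to an isomorphism on all of $\D^b(\coh(X))$. Here one must handle the torsion/support issue that depth $\geq1$ is designed to kill. The approach I would try first is to invoke Olander's refinement of the Canonaco--Stellari theorem directly and verify its hypotheses. Once that is in place, the reduction in the first paragraph finishes the proof.
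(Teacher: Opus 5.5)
Your first paragraph is correct. Given ordinary uniqueness from \autoref{thm:uniqenhangeom}(i), strong uniqueness is equivalent to the statement that every exact autoequivalence of $\D^b(\coh(X))$ is isomorphic to $\Hc$ of an automorphism of a fixed enhancement. That statement, however, is no easier than the proposition; it is the usual ``strong uniqueness implies Fourier--Mukai'' argument run backwards. The sketch you give for it has a genuine gap exactly where the hypotheses matter.

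\textbf{The ample sequence does not exist.} You work with $\{\OO_X(-n)\}$, but $X$ is only proper. A proper $\kK$-scheme with an ample line bundle is projective, so in general there is no such sequence, and your argument covers at best the projective case, which is already \cite[Theorem 9.9]{Lunts-Orlov10}. The missing ingredient is an almost ample set of objects in $\coh(X)$ in the sense of Canonaco--Stellari, which \cite[Lemma 3.3.2]{OlanderThesis} supplies for $X$ proper of depth $\geq 1$. Once you have it, \cite[Proposition 3.7]{Canonaco-Stellari14} gives strong uniqueness directly for $\D^b$ of an abelian category with an almost ample set. That is the paper's entire proof, following \cite[Section 5.A]{LorenzinThesis}, and it needs no detour through autoequivalences or kernels.

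The same input is what your first bullet is missing. Uniqueness of enhancements alone does not let you compare $\Phi$ with a dg quasi-functor. What makes such a comparison possible is formality on a suitable piece of the (almost) ample set.

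\textbf{The role of depth is misplaced.} Depth $\geq 1$ is not about $\Hom(\OO_X(-n),T_0)$, which is nonzero for any nonzero $0$-dimensional $T_0$. It guarantees $\Hom(A,P_n)=0$ for every $A\in\coh(X)$ and $n\gg 0$. In the projective case this is equivalent to $T_0(\OO_X)=0$: a skyscraper $k(x)\subset\OO_X$ would map nontrivially into every $\OO_X(-n)$.

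\textbf{The passage from $\dperf{X}$ to $\D^b(\coh(X))$ fails as written.} Your $\Phi$ is defined only on $\D^b(\coh(X))$, so there is no reason for it to extend to a coproduct-preserving functor on $\Dqc(X)$. Even setting that aside, homotopy colimits are not functorial in a triangulated category. A natural isomorphism $\Phi|_{\dperf{X}}\iso\Phi_\ce|_{\dperf{X}}$ therefore does not propagate to a natural isomorphism on $\D^b(\coh(X))$ by writing objects as homotopy colimits of perfect complexes.

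Moving between $\dperf{X}$ and $\D^b(\coh(X))$ is exactly the nontrivial content of \autoref{lem:unienTbcTc} and \autoref{lem:struniqgen}, which use the $\fs$-construction and the coherence of $\Dqc(X)$. Suppose you could show that every autoequivalence of $\dperf{X}$ lifts, which again needs Olander's almost ample set. Then your reduction together with \autoref{thm:uniqenhangeom}(ii) would give strong uniqueness for $\dperf{X}$, and \autoref{lem:struniqgen} would transfer it to $\D^b(\coh(X))$. That is a different argument from the one you sketched, and the paper runs this implication in the opposite direction in \autoref{cor:struniqexcomp}.
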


\prf
As explained in \cite[Section 5.A]{LorenzinThesis}, the result about the strong uniqueness of linear enhancements follows directly from \cite[Proposition 3.7]{Canonaco-Stellari14} together with \cite[Lemma 3.3.2]{OlanderThesis}.
\eprf

Recall that, under the assumptions in \autoref{prop:struniqex}, the inclusion $\dperf{X}\subset\D^b(\coh(X))$ need not be an equality. Thus \autoref{prop:struniqex}
proves only the strong uniqueness of $\D^b(\coh(X))$, and not of $\dperf{X}$. However, if $X$ is assumed projective (and not only proper), then the older result of \cite[Theorem 9.9]{Lunts-Orlov10} shows that both $\dperf{X}$ and $\D^b(\coh(X))$ have a strongly unique enhancement, under the same depth hypothesis.

\rmk{rmk:otherstunique}
It is a deep and interesting challenge to work out
the extent to which strong uniqueness of enhancements holds, in the geometric or algebraic settings. In view of \cite{Jasso-Keller-Muro}, we know not to expect to always have strong uniqueness. The reader can also have a look at \cite{Rizzardo-Symons-VanDenBergh} for recent examples in this direction. In general we expect the existence of rings $R$ for which either $\D(\MMod R)$, and/or some of its natural subcategories, do not have a strongly unique enhancement. Nonetheless, computations of special examples seem to point in the opposite direction, see \cite{CNSStrUniq25,CY,Lo}.
\ermk

In the case of the derived categories of schemes with support in a closed subscheme, we have the following additional result.

\begin{theorem}[\cite{Canonaco-Stellari14}, Theorem 1.2]\label{thm:suppCS}
Let $X$ be a quasi-projective scheme over a field $\kK$ containing a projective subscheme $Z\subset X$ such
that $Z$ has depth $\ge1$ and $\OO_{iZ}\in\dperf{X}$,
for all $i> 0$.
Then $\dperfs Z{X}$ has a strongly unique
enhancement in $\HoStabInf$.
\end{theorem}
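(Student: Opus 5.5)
The plan is to follow the template of \autoref{prop:struniqex}: extract strong uniqueness from \cite[Proposition 3.7]{Canonaco-Stellari14}, adapted to the support setting. That proposition reduces strong uniqueness to the existence of a suitable ``ample sequence''.

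\emph{Step 1: the ample sequence.} Since $X$ is quasi-projective and $Z\subset X$ is projective, choose an ample line bundle $\OO_X(1)$ on $X$ and consider the objects $\OO_{iZ}(j)$, for $i>0$ and $j\in\ZZ$, where $iZ$ is the $i$-th infinitesimal neighbourhood of $Z$. By hypothesis each $\OO_{iZ}(j)$ lies in $\dperfs Z{X}$. The full subcategory $\ca$ of the abelian category $\coh_Z(X)$ with these objects should play the role that the sequence $\{\OO_X(j)\}$ plays in the Lunts--Orlov argument for projective schemes.

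\emph{Step 2: generation and vanishing.} Show that $\ca$ classically generates $\dperfs Z{X}$, and that $\Hom(A,\sh[k]B)=0$ for $k<0$ and all $A,B\in\ca$. The negative-Ext vanishing is automatic, since $\ca$ consists of sheaves. For generation, use that $\dperfs Z{X}$ is generated by a compact generator (\autoref{ex:compgen}(ii)) and that, for any object supported on $Z$, high enough twists and thickenings detect it. Concretely, if $\Hom(\OO_{iZ}(j),\sh[k]F)=0$ for all $i,j,k$, then $F=0$ for $F\in\Dqcs Z(X)$. This follows by a standard Serre-vanishing and local-cohomology argument, writing $\R\Gamma_Z$ as the colimit of $\HHom(\OO_{iZ},-)$.

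\emph{Step 3: the depth hypothesis.} Use depth $\geq1$ of $Z$ exactly as in \cite{Canonaco-Stellari14} and \cite[Lemma 3.3.2]{OlanderThesis}. It guarantees that any exact autoequivalence, or any enhancement-induced equivalence, fixing the objects of $\ca$ up to isomorphism is determined on morphisms of $\ca$. This is the point where a sheaf-theoretic rigidity argument, via point-like objects and the absence of $0$-dimensional torsion, is needed.

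\emph{Step 4: conclusion.} With Steps 1--3, apply \cite[Proposition 3.7]{Canonaco-Stellari14}. Given two enhancements $(\TT_1,\fF_1)$ and $(\TT_2,\fF_2)$, the full subcategories on the preimages of $\ca$ have cohomology concentrated in degree $0$, because of the negative-Ext vanishing. A $\tau^{\leq0}$-truncation zigzag, as in the proof of \autoref{prop:uniqueadditive}, then identifies both with $\ca$ compatibly with the $\fF_i$. Extending along generation gives an isomorphism $f\colon\TT_1\to\TT_2$ in $\HoStabInf$ with $\fF_1\iso\fF_2\comp\Hc(f)$.

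I expect the main obstacle to be Step 3, namely controlling the compatibility on objects rather than merely on a generator, in the non-noetherian-support setting where $Z$ is not the whole scheme. The first thing I would try is to reduce to Olander's lemma on the thickenings $iZ$, which are projective schemes.
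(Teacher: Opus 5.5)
The paper gives no proof of this statement. It is quoted from \cite[Theorem~1.2]{Canonaco-Stellari14}. Measured against that argument, your Steps 1 and 2 are fine (the $\OO_{iZ}(j)$ are compact and do generate), but Step 4 rests on a false claim, and Step 3 does not supply the ingredient that would replace it.

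\textbf{The false claim in Step 4.} You infer from the vanishing of negative Exts that the full subcategory of $\TT_i$ on the objects of $\ca$ has cohomology concentrated in degree $0$. Negative-Ext vanishing only gives concentration in degrees $\geq0$. In the supported setting the positive Exts are genuinely nonzero, and twisting cannot remove them, because they come from the local sheaves $\mathcal{E}xt^q(\OO_{iZ},\OO_{i'Z})$ on $Z$.

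Already for a line $Z\subset X=\mathbb{P}^2$, which satisfies all the hypotheses, one has $\mathcal{E}xt^1(\OO_Z,\OO_Z)\iso\OO_Z(1)$. The local-to-global spectral sequence shows that $\Ext^1(\OO_Z(j),\OO_Z(j'))$ is an extension of $H^0(\OO_Z(d+1))$ by $H^1(\OO_Z(d))$, where $d=j'-j$. One of these is nonzero for every $d$. So even the device of \cite{Lunts-Orlov10}, restricting to a subfamily on which higher Exts vanish in one direction, is unavailable here.

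Consequences of this:
\begin{itemize}
\item The $\tau^{\leq0}$ zigzag of \autoref{prop:uniqueadditive} is not a chain of equivalences. For each enhancement it only produces a functor from the ordinary category $\ca$ into $\TT_i$.
\item Hence you get an exact functor $\K^b(\ca)\la\dperfs ZX$ which is the inclusion on $\ca$ but is not an equivalence.
\item Even if you had an identification of the two subcategories on $\ca$ compatible with the $\fF_i$, ``extending along generation'' would give at most non-strong uniqueness. An isomorphism $\fF_1\iso\fF_2\comp\Hc(f)$ on $\ca$ does not propagate to an isomorphism of exact functors on all of $\dperfs ZX$, because cones are not functorial.
\end{itemize}

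\textbf{The missing ingredient.} What you need is the mechanism of Orlov and Lunts--Orlov, generalized in \cite{Canonaco-Stellari14}. It has two parts:
\begin{itemize}
\item A set of perfect sheaves $P\in\coh_Z(X)$ which is \emph{almost ample}. Roughly, every $A\in\coh_Z(X)$ is a quotient of some $P^{\oplus k}$ such that moreover $\Hom(A,P)=0$.
\item An extension statement: exact functors which agree, on objects and morphisms, with the canonical one on such a set are isomorphic to it on the whole category. This is what gives both uniqueness and the compatibility with the $\fF_i$ required for strong uniqueness. An abstract criterion of this kind is \cite[Proposition~3.7]{Canonaco-Stellari14}, the result also invoked for \autoref{prop:struniqex}.
\end{itemize}

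The vanishing $\Hom(A,P)=0$ is what makes the extension of the isomorphism, from the $P$'s to $\coh_Z(X)$ and then to complexes, well defined. This is exactly where depth $\geq1$ enters. For instance, $\Hom(A,\OO_Z(j))=H^0(\HHom(A,\OO_Z)(j))$ vanishes for $j\ll0$. The reason is that $\HHom(A,\OO_Z)$ sits locally inside $\OO_Z^{\oplus n}$, so it has no $0$-dimensional associated points.

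Your Step 3 (point-like objects, autoequivalences being determined on morphisms of $\ca$) is a different statement. It neither establishes an almost ample property nor provides the extension result, so Step 4 is left without its key input. Note also that $X$ is noetherian here, so the ``non-noetherian-support'' obstacle you anticipate is not the relevant one.
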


The sheaf $\OO_{iZ}$ in the statement above denotes the quotient $\OO_X/\mathcal{I}_Z^i$, where $\mathcal{I}_Z^i$ is the $i$-th power of the ideal $\mathcal{I}_Z$ of the closed subscheme $Z\hookrightarrow X$. 

We expect that none of the results discussed in this section is optimal and that they will be subject to further improvements. As we will see in \autoref{subsec:waen}, any such new result can be fed into our general machine, transporting the strong uniqueness of enhancements back and forth between the category of perfect complexes and the bounded derived category of coherent sheaves.

\section{Metrics, enhancements and completions}
\label{S974}

We begin with \autoref{subsec:metrics}, recalling the notion of metrics on triangulated categories. Completion with respect to a metric has $\infty$-category versions which we introduce in \autoref{theconstructionL}. After a quick discussion of the basic properties, we prove a result showing that, subject to technical hypotheses that we specify, the completion
of an enhancement as in \autoref{theconstructionL}
achieves the reverse of the restriction map on
enhancements of \autoref{restrictenhancements}.
See \autoref{P15.5} for the precise statement.

\subsection{Metrics}\label{subsec:metrics}

We recall some fundamental definitions and results from \cite{Neeman18A}.

\dfn{def:metric}
A \emph{good metric} on a triangulated category $\ct$ is a sequence $\{\cm_i\}_{i\in\NN}$ of additive full subcategories of $\ct$ containing $0$, and such that for every $i\in\NN$ the following properties hold:
\begin{enumerate}
\item[{\rm (i)}] $\sh[-1]{\cm_{i+1}}\cup\cm_{i+1}\cup\sh{\cm_{i+1}}\subset\cm_i$,
\item[{\rm (ii)}] $\cm_i\ast\cm_i=\cm_i$.
\end{enumerate}
\edfn

In the rest of the paper we will often simplify the notation, and for a good metric we will write $\{\cm_i\}$ in place of $\{\cm_i\}_{i\in\NN}$

\exm{ex:metricTsb}
Let $\ct$ be a weakly approximable triangulated category, and fix a \tstr\ $\tst\ct$ in the preferred equivalence class.

(i) The triangulated category $\ct^c$ carries a good metric $\{\cm_i\}$, where $\cm_i:=\ct^c\cap\ct^{\leq -i}$ for $i\in\NN$. Also the category $(\ct^b_c)\op$ is endowed with a good metric $\{\cn_i\op\}$, where $\cn_i:=\ct^b_c\cap\ct^{\leq -i}$ for all $i\in\NN$ (see \cite[Example 1.5]{Neeman18A}).

(ii) In the same setting, following \cite[Example 8.4 and Definition 8.5]{Neeman25}, for $i\in\NN$ consider the full subcategory $\cm_i:=\tsb\cap\ct^{\leq-i}$ of $\tsb$. The collection $\{\cm_i\}$ provides a good metric on $\tsb$. Moreover, $(\ct^b)\op$ can be endowed with a good metric $\{\cn_i\op\}$, where $\cn_i:=\ct^b\cap\ct^{\leq -i}$ for all $i\in\NN$.
\eexm

For later use we keep in mind the following definition.

\dfn{def:ordermetric}
Let $\{\cm_i\}$ and $\{\cn_i\}$ be good metrics on $\ct$.

{\rm (i)} We say that $\{\cm_i\}$ is \emph{finer than} $\{\cn_i\}$ and write $\{\cm_i\}\preceq\{\cn_i\}$ if, for every integer $i>0$, there exists an integer $j>0$ such that $\cm_j\subset\cn_i$.

{\rm (ii)} The good metrics $\{\cm_i\}$  and $\{\cn_i\}$ are \emph{equivalent} if $\{\cm_i\}\preceq\{\cn_i\}\preceq\{\cm_i\}$.
\edfn

Given a triangulated category $\ct$ with a good metric $\{\cm_i\}$, following \cite{Neeman18A}, one can form three additional categories, actually three subcategories of the category $\MMod{\ct}$ of additive functors $\ct\op\to\MMod{\ZZ}$. Let us review here their constructions.

First of all, recall the following:

\dfn{def:Cauchy}
A sequence $E_\bullet=\{E_1\to E_2\to E_3\to\dots\}$ of objects and maps in $\ct$ is a \emph{Cauchy sequence} if, for every integer $i>0$, there exists an integer $M>0$ such that, in any distinguished triangle $E_m\to E_{m'}\to D_{m,m'}$ with $M\leq m<m'$, the object $D_{m,m'}$ is in $\cm_i$.
\edfn

Next denote by $\Yon[\ct]\colon\ct\la\MMod{\ct}$ the (fully faithful) \emph{Yoneda functor}, defined by $\Yon[\ct](E):=\Hom_\ct(-,E)$, for $E\in\ct$. When there is no possible confusion, we will often write $\Yon$ for $\Yon[\ct]$.

The full subcategory $\fl(\ct)\subset\MMod{\ct}$ has for objects all isomorphs of colimits in $\MMod\ct$ of Cauchy sequences in $\ct$. That is, the objects of $\fl(\cs)$ can all be written as
$\colim\Yon[\ct](E_i)$, where $E_\bullet=\{E_1\to E_2\to E_3\to\dots\}$ is a Cauchy sequence in $\ct$. 

Further one considers the following full subcategory of $\MMod\ct$:
\[
\fc(\ct):=\left\{F\in\MMod{\ct}:\text{there is an integer }i>0\text{ with }\Hom_{\MMod{\ct}}(\Yon[\ct](\cm_i),F)=0\right\}.
\]

Finally, we form the third full subcategory of $\MMod\ct$
\begin{equation}\label{eq:sigma}
\fs(\ct):=\fl(\ct)\cap\fc(\ct),
\end{equation}
which will be fundamental in the rest of this paper.

\rmk{rmk:Stria}
A beautiful property of the category $\fs(\ct)$, which was proved in \cite[Theorem 2.4]{Neeman18A}, is that it is triangulated (with an explicit set of distinguished triangles). Furthermore, $\fs(\ct)$ carries a good metric $\{\cn_i\}$ which, as in \cite[Lemma 1.6]{Neeman25}, is defined as follows. First, for every integer $i\in\nn$ consider the full subcategory $\cl_i\subset\fl(\ct)$ defined by
\begin{equation}\label{eq:metLT}
\cl_i:=\left\{F\in\fl(\ct)\left|\begin{array}{c}\text{There is a Cauchy sequence $E_\bullet$ in $\ct$ such}\\ \text{that $F\iso\colim\Yon[\ct](E_j)$ and $E_j\in\cm_i$ for all $j$}\end{array}\right.\right\}.
\end{equation}
Next one sets
\begin{equation}\label{eq:metrN}
\cn_i:=\cl_i\cap\fs(\ct).
\end{equation}
\ermk

\exm{ex:concrsigma}
Let $\ct$ be a weakly approximable triangulated category.

(i) It is proved in \cite[Proposition 0.16]{Neeman18A} that, for the full subcategories $\ct^c$ and $\ct^b_c$ with the good metrics in \autoref{ex:metricTsb} (i), we have $\fs\big(\ct^c\big)\iso\ct^b_c$ while, if $\ct$ is coherent, then $\fs((\ct^b_c)\op))\iso(\ct^c)\op$.

In particular, if $R$ is a right coherent ring, $\fs(\dperf{R})\iso\D^b(\mmod{R})$ and $\fs\big(\D^b(\mmod{R})\op\big)\iso\dperf{R}\op$. Similarly, in the geometric setting of \autoref{ex:basicex} (ii), if we assume further that $X$ is noetherian then we have  $\fs(\dperfs Z {X})\iso\dbcohs Z(X)$ and $\fs(\dbcohs Z(X)\op)\iso\dperfs Z {X}\op$.

(ii) \cite[Example 8.4]{Neeman25} shows that if $\ct^b$ is endowed with the good metric of \autoref{ex:metricTsb} (ii), then $\fs\big(\big(\ct^b\big)\op\big)\iso\big(\tsb\big)\op$.
\eexm

As in \cite{Neeman18A}, we recall also the following important notion.

\dfn{def;goodext}
Let $\ct_1$ and $\ct_2$ be triangulated categories, and assume that $\ct_1$ has a good metric $\{\cm_i\}$. A fully faithful triangulated functor $\fF\colon\ct_1\to\ct_2$ is a \emph{good extension} with respect to the metric  $\{\cm_i\}$ if, for any Cauchy sequence $E_\bullet$ in $\ct_1$, the natural morphism
\[
\colim\Yon[\ct_1](E_i)\la\YYon[\fF](\Hocolim(\fF(E_i)))
\]
is an isomorphism in $\MMod{\ct_1}$.
\edfn

\rmk{explanationofgoodextension}
Here $\YYon[\fF]\colon\ct_2\to\MMod{\ct_1}$ is the \emph{restricted Yoneda functor} defined as $\YYon[\fF](E):=\Hom_{\ct_2}(\fF(-),E)$, for all $E\in\ct_2$. Again, when there is no risk of confusion, we will write $\YYon$.

Note that, while we are not assuming that the category
$\ct_2$ has coproducts, for the definition to make
sense means in particular that for any Cauchy sequence
$E_\bullet$ in $\ct_1$, we are assuming the existence in
$\ct_2$ of the countable coproduct $\coprod_{i=1}^\infty\fF(E_i)$.
And (as usual) the homotopy colimit is
defined, in $\ct_2$, by completing to a distinguished triangle the
map $\id-\shi$, as in
\[\xymatrix@C+10pt{
\ds\coprod_{i=1}^\infty\fF(E_i)
\ar[rr]^-{\id-\shi}
& &
\ds\coprod_{i=1}^\infty\fF(E_i)
\ar[r]
&
\hoco\fF(E_i)
\ar[r]
&
\ds\coprod_{i=1}^\infty\sh{\fF(E_i)}
}\]
Now, take any object $A\in\ct_1$, and apply to this triangle
the functor $\Hom(\fF(A),-)$ to obtain the bottom
row in the commutative diagram
\[\xymatrix@C-4pt{
\ds\coprod_{i=1}^\infty\Hom\big(\fF(A)\,\,,\,\,\fF(E_i)\big)
\ar[rr]^-{\id-\shi}
\ar[d]^\alpha
& &
\ds\coprod_{i=1}^\infty\Hom\big(\fF(A)\,\,,\,\,\fF(E_i)\big)
\ar[r]^-\gamma
\ar[d]^\alpha
&
\colim\,\Hom\big(\fF(A)\,\,,\,\,\fF(E_i)\big)
\ar@{.>}[d]^{\exists!\beta}
\\
\ds\Hom\left(\fF(A)\,\,,\,\,\coprod_{i=1}^\infty\fF(E_i)\right)
\ar[rr]^-{\id-\shi}
& &
\ds\Hom\left(\fF(A)\,\,,\,\,\coprod_{i=1}^\infty\fF(E_i)\right)
\ar[r]^-\delta
&
\Hom\left(\fF(A)\,\,,\,\,\hoco\fF(E_i)\right)
}\]
The top row is the short exact sequence
for $\colim\,\Hom\big(\fF(A),\fF(E_i)\big)$
in the category of abelian groups.
The vertical map $\alpha$ is the canonical morphism, and
the reader should be alerted that we are not
  assuming the map $\alpha$ to be an isomorphism. There
is no compactness hypothesis on the objects
in the image of the functor
$\fF:\ct_1\la\ct_2$. The solid square on
the left obviously commutes,
and the two equal composites, of
the solid arrows from top left to
bottom right, both vanish.
This permits us to define the
vertical map $\beta$ as the unique
group homomorphism making the diagram commute. 
And the hypothesis of
\autoref{def;goodext} amounts to the assertion
that the map $\beta$ is an isomorphism for all $A\in\ct_1$.

Because the map $\gamma$ is always an epimorphism
so is $\beta\gamma=\delta\alpha$, and hence
$\delta$ must be an epimorphism. The
exactness of
the (extended) bottom row, for all $A\in\ct_1$, tells
us that the bottom row must be a short exact sequence
of abelian groups.
\ermk

As in \cite[Definition~\ref*{D21.7}]{Neeman18A}, if $\fF\colon\ct_1\to\ct_2$ is a good extension with respect to a good metric $\{\cm_i\}$ on $\ct$, we will denote by $\fl'(\ct_1)$ the following full subcategory of $\ct_2$
\begin{equation}\label{eq:L'}
\fl'(\ct_1):=\left\{F\in\ct_2:\text{there is a Cauchy sequence $E_\bullet$ in $\ct_1$ such that }F\iso\Hocolim(\fF(E_i))\right\}.
\end{equation}

We also need to recall something about morphisms of Cauchy sequences.

Let $\ct$ be a triangulated category with a good metric $\{\cm_i\}$. Given two Cauchy sequences $E_\bullet$ and $F_\bullet$ in $\ct$, one defines a sequence $f_\bullet=\{f_i\colon E_i\to F_i\}$ of morphisms in $\ct$ to be a \emph{morphism of Cauchy sequences} if $f_i$ is compatible with the maps in $E_\bullet$ and $F_\bullet$ in the obvious sense, for all $i\in\NN$. A morphism of Cauchy sequences $f_\bullet\colon E_\bullet\to F_\bullet$ is of \emph{type-$n$} if, in any completion to a Cauchy sequence of distinguished triangles
\[
E_\bullet\la F_\bullet\la G_\bullet\ ,
\]
we have $G_i\in\cm_n$ for all $i\gg 0$. Note that the definition requires that the sequence $G_\bullet$ obtained by completing the morphisms $f_j$ to distinguished triangles in $\ct$ can be made to form a Cauchy sequence. This is achieved in \cite[Lemma 2.8 (iii)]{Neeman18A}. 
Consider a morphism $f\colon E\to F$ in $\fl(\ct)$. Because $E$ and $F$ are objects in $\fl(\ct)$ there exist Cauchy sequences $E_\bullet$ and $F_\bullet$ such that $E\iso\colim\Yon[\ct](E_i)$ and $F\iso\colim\Yon[\ct](F_i)$. We say that $f$ is of \emph{type-$n$ with respect to $(E_\bullet,F_\bullet)$} if there exist Cauchy subsequences  $E'_\bullet$ and $F'_\bullet$ of $E_\bullet$ and $F_\bullet$ and a morphism $f'_\bullet\colon E'_\bullet\to F'_\bullet$ of type-$n$ such that $f=\colim f'_\bullet$.

Next we specialize to smaller classes of metrics, where we can prove more. 

\dfn{def:excmetrics}
A good metric $\{\cm_i\}$ on a triangulated category $\ct$ is \emph{very good} if the following holds:
\begin{enumerate}
\item[{\rm (i)}] $\ct=\bigcup_{i\in\NN}{}^\perp\cm_i$;
\item[{\rm (ii)}] For every integer $m\in\NN$ there exists an integer $n>m$ such that any object $F\in\ct$ sits in a distinguished triangle $E\to F\to D$ with $E\in{}^\perp\cm_n$ and $D\in\cm_m$;
\end{enumerate}
A very good metric $\{\cm_i\}$ is \emph{excellent} if the following holds:
\begin{enumerate}
\setcounter{enumiv}{2}
\item[{\rm (iii)}]For every integer $m\in\NN$ there exists an integer $n>m$ such that any object $F\in\ct$ admits, in $\fl(\ct)$, a type-$m$ morphism $\Yon[\ct](F)\to D$ with respect to $(F,D_\bullet)$, where in the notation $(F,D_\bullet)$ the Cauchy sequence F is taken to be the constant sequence $F\stackrel{\id}{\la} F\stackrel{\id}{\la} F\stackrel{\id}{\la}\dots$, and $D= \colim
\Yon[\ct](D_i)\in\fs(\ct)\cap\cl_n^\perp$, for $\{\cl_i\}$ as in \eqref{eq:metLT}.
\end{enumerate}
\edfn

\exm{ex:excelmetriccompact}
Let $\ct$ be a weakly approximable triangulated category. Then the good metric defined in \autoref{ex:metricTsb} on $\tsb$ is excellent by \cite[Example 8.4]{Neeman25}. If $\ct$ is coherent, then \cite[Example 2.3]{Neeman25} shows that the metric on $\ct^c$ in \autoref{ex:metricTsb} is also excellent.
\eexm

Let $\ct$ be a weakly approximable triangulated
category, and let $\ct^c$ be the subcategory
of compact objects.
In \autoref{ex:metricTsb}(i)
we defined a metric on $\ct^c$, which seems to
depend on the embedding into $\ct$.
But \cite[Remark \ref*{R22.105.5} or Proposition \ref*{P22.109}]{Neeman18A}
proves that the equivalence class of this metric has intrinsic descriptions,
which use only the category $\ct^c$ and its
triangulated structure. In view of this fact,
the following lemma tells us that 
$\ct$ being coherent can be checked entirely inside $\ct^c$.

\lem{excellencebycompacts}
Let $\ct$ be a weakly approximable triangulated
category. The triangulated category $\ct$ is
coherent if and only if the metric on
$\ct^c$ of \autoref{ex:metricTsb}(i) is excellent.
\elem

\prf
As mentioned in \autoref{ex:excelmetriccompact},
if the category $\ct$ is coherent then the
metric on $\ct^c$ is known to be excellent. We need to
prove the converse.

Assume therefore that the metric on
$\ct^c$ is excellent. Then of course it is
very good, and by its
definition in \autoref{ex:metricTsb}(i)
we have that $\cm_i=\ct^c\cap\ct^{\leq-i}$
is closed in $\ct^c$
under direct summands. There is
a choice of metric in the equivalence class
closed under direct summands.
We also know that the embedding
$\ct^c\la\ct$ is a good extension with respect
to the metric, and
\cite[Lemma~9.5]{Neeman24} tells us
that $\fl'(\ct^c)=\ct^-_c$.
But the metrics on $\ct^c$ is
excellent, and
\cite[Proposition \ref*{P94.3}]{Neeman25}
applies.
For the integer $m=1$, there must
exist an integer $N>0$
such that any object $F'\in\ct^c$
admits a distinguished triangle
$E'\la F'\la D$
with $E'\in\ct^-_c\cap\ct^{\leq-1}$ and
with $D\in\ct^b_c\cap\ct^{\geq-N}$.
Now any object $F\in\ct^-_c$, by the
definition of $\ct^-_c$, admits a distinguished triangle
$F'\la F\la D'$, with $F'\in\ct^c$
and with $D'\in\ct^-_c\cap\ct^{\leq-N-2}$, with $N$ the
integer above.
Starting with the object $F\in\ct^-_c$,
we first form the triangle
$\sh[-1]D'\la F'\la F\la D'$,
and then, observing that
$F'\in\ct^c$, we form the triangle
$E'\la F'\la D$. The composite
$\sh[-1]D'\la F'\la D$ is a morphism from
$\sh[-1]D'\in\ct^{\leq-N-1}$ to $D\in\ct^{\geq-N}$
and must vanish, and the triangle 
$\sh[-1]D'\la F'\la F$  tells us that
the map $F'\la D$ must factor as the composite $F'\la F\la D$.

Completing the composable morphisms
$F'\la F\la D$ to an octahedron
in the triangulated category $\ct^-_c$ 
gives the diagram
\[\xymatrix{
\sh[-1]D'
\ar@{=}[d]
\ar[r]
&
E'
\ar[r]\ar[d]
&
E
\ar[d]\ar[r]
&
D'
\\
\sh[-1]D'
\ar[r]
&
F'
\ar[r]
\ar[d]
&
F
\ar[d]
\\
&
D
\ar@{=}[r]
&
D
}\]
By construction we know that $D\in\ct^{\geq-N}$,
and the distinguished triangle $E'\la E\la D'$,
coupled with the fact that 
$E'\in\ct^{\leq-1}$ and $D'\in\ct^{\leq-N-2}$, gives that
$E\in\ct^{\leq-1}$. The distinguished triangle
$E\la F\la D$, with $E\in\ct_c^-\cap\ct^{\leq-1}$ and
with $D\in\ct^b_c\cap\ct^{\geq-N}$,
therefore proves the coherence of
the triangulated category $\ct$.
\eprf

\dfn{D1.252525}
A good metric on a triangulated category $\ct$ is \emph{characteristic}
if any triangulated autoequivalence of $\ct$
takes the metric to an equivalent one in the sense of \autoref{def:ordermetric}.
\edfn

\exm{ex:charmetricTc}
If $\ct$ is a coherent weakly approximable triangulated category then the excellent metrics on $\ct^c$ and $\ct_c^b$, of \autoref{ex:metricTsb}, are characteristic by \cite[Remark \ref*{R22.105.5} or Proposition \ref*{P22.109}]{Neeman18A} and \cite[Proposition \ref*{P1.105}]{Neeman18A}, respectively.
\eexm

\lem{lem:charmetric}
Let $\ct$ be a weakly approximable
triangulated category. Then the metric $\{\cm_i\}$ on $\tsb$ in \autoref{ex:metricTsb} is characteristic.
\elem

\prf
If $G$ is any compact generator for $\ct$, there
exists an integer $A>0$ and inclusions of subcategories
of $\ct$
\[
\ct^{\leq-A}\subset G[0,\infty]^\perp\subset\ct^{\leq A}\ ;
\]
The first inclusion is part of the weak approximability
assumptions, and the second inclusion is by
\cite[Lemma~3.9(iv)]{Burke-Neeman-Pauwels18}.
Intersecting with $\tsb$ gives inclusions
\[
\cm_{i+A}\subset \tsb\cap G[-i,\infty]^\perp\subset\cm_{i-A}\ .
\]
Thus it suffices to check that any triangulated autoequivalence $\fF$
of $\tsb$ takes the metric
$\cn_i=\tsb\cap G[-i,\infty]^\perp$ to an
equivalent one.

But now $\fF$ must respect the subcategory $\big(\tsb\big)^c$,
and Corollary~\ref*{C0.15} tells
us that $\big(\tsb\big)^c=\ct^c$. Therefore
the classical generator $G\in\ct^c$ must go
to some other classical generator
$H\in\ct^c$, and hence $\fF$ takes
\[
\tsb\cap G[-i,\infty]^\perp\qquad\text{ to }
\qquad \tsb\cap H[-i,\infty]^\perp\ ,
\]
and these are clearly equivalent metrics.
\eprf

\subsection{Metric completion of enhancements}\label{subsec:restextenh}

Suppose we are given a pair of
triangulated categories $\cs\subset\ct$.
Back in \autoref{restrictenhancements}
we defined a restriction map
$\Phi^?_{\ct,\cs}\colon\Enhq(\ct)\la\Enhq(\cs)$,
and mentioned that we will be interested in
constructions that reverse this process---we
even gave an example in
\autoref{constrtenhTsb}.
In this section we will exhibit how
completion with respect to a metric can
also be used.

With this in mind we revisit, in the setting of stable $\infty$-categories, the constructions of \cite{Neeman25} that were briefly summarized in \autoref{subsec:metrics}.

\con{theconstructionL}
Let $\SSS\in\StabInfA$ and set $\cs:=\Hc(\SSS)$. Assume that we are given a very good metric
$\{\cm_i\}_{i\in\nn}$ on $\cs$ as in \autoref{def:excmetrics}.
Assume further that each of the subcategories $\cm_i\subset\cs$ is closed in $\cs$ under direct summands.

Consider the Yoneda embedding $\Yon\colon\SSS\hookrightarrow\IndC(\SSS)$. By \cite[Example~\ref*{E21.3}]{Neeman18A} the fully faithful functor
$\Hc(\Yon)\colon\Hc(\SSS)\la\Hc(\IndC(\SSS))$
is a good extension  with respect to the metric, in the sense of \autoref{def;goodext}. As in \eqref{eq:L'}, we can consider the full subcategory $\fl'(\cs)\subset\Hc(\IndC(\SSS))$, which is triangulated by \cite[Proposition~\ref*{P93.9}]{Neeman25}.
This allows us to form the stable $\infty$-category
\[
\LL(\SSS):=\Phi^?_{\Hc(\IndC(\SSS)),\fl'(\cs)}(\IndC(\SSS))\ ,
\]
with $\Phi$ as in \autoref{restrictenhancements}.
Recall that this simply means that
$\LL(\SSS)\subset\IndC(\SSS)$
is the full $\infty$-subcategory having for objects
$\fl'(\cs)\subset\Hc(\IndC(\SSS))$.
\econ

\rmk{L1.4}
The fact that every object of $\fl'(\cs)$ is isomorphic
to a homotopy colimit in $\Hc(\IndC(\SSS))$, of a Cauchy sequence
in $\cs$, can be reformulated in the stable $\infty$-category $\IndC(\SSS)$ as follows.
Given a Cauchy sequence
in $\cs=H^0(\SSS)$
we first lift it to $\SSS$, meaning
we choose in $\SSS$ a sequence
of  1-morphisms 
$E_1\stackrel{\varphi_1}{\la} E_2\stackrel{\varphi_2}{\la}\cdots E_n\stackrel{\varphi_n}{\la} E_{n+1}\la\cdots$,
that we denote by $E_\bullet$. This
sequence is such that
\begin{itemize}
\item[{\rm (a)}] The image in $\cs\iso\Hc(\SSS)$ is a Cauchy sequence.
\end{itemize}
We then choose a lifting to
$\IndC(\SSS)$ of the
morphism 
\[
\xymatrix@C+30pt{
\ds\coprod_{i=1}^\infty\Yon(E_i)\ar[r]^-{\id-\shi} &
\ds\coprod_{i=1}^\infty\Yon(E_i)\ .
}
\]
In other words, the morphism above
is well-defined in the additive category
$\Hc(\IndC(\SSS))$, and we choose a 1-morphism
in $\IndC(\SSS)$ lifting it.
And then
\begin{itemize}
\item[{\rm (b)}]
The category $\LL(\SSS)$ has for objects
all homotopy cofibers of
1-morphisms $(\id-\shi)$ as above.
\end{itemize}
\ermk

\rmk{rmk:invL}
Suppose we are given a pair of objects
$\SSS,\SSS'\in\HoStabInfA$, as well as a
very good metric $\{\cm_i\}$
on $\Hc(\SSS)$ and a very good metric $\{\cm'_i\}$ on $\Hc(\SSS')$, with $\cm_i$ and
$\cm'_i$ all closed under direct summands.
It is clear from the definition that any isomorphism
$\ph:\SSS\iso\SSS'$ in $\HoStabInfA$, respecting the metrics up to equivalence, induces an isomorphism $\LL(\ph):\LL(\SSS)\iso\LL(\SSS')$ in $\HoStabInfA$.
The hypothesis that $\ph$ \emph{respects the metric up to equivalence} means
that $\Hc(\ph):\Hc(\SSS)\iso\Hc(\SSS')$ takes
the metric $\{\cm_i\}$ on $\Hc(\SSS)$ to one equivalent to the metric $\{\cm'_i\}$ on
$\Hc(\SSS')$.
\ermk

\rmk{rmk:thedefinitionisok}
Let $\cs$ be a triangulated category,
and let $\{\cm_i\}$ be a characteristic
metric on $\cs$ which is very good,
and where the $\cm_i$ are
closed in $\cs$ under direct summands.
By \autoref{rmk:invL} we obtain a map
\[
\LL:\Enhq(\cs)\la\Enhq(\Box),
\]
taking an isomorphism class $[\SSS]$, of enhancements of the
metric triangulated category
$\cs$, to the isomorphism class $[\LL(\SSS)]$. To repeat in slightly different words: given $\SSS$ and a characteristic, very good metric on $\cs=\Hc(\SSS)$, the construction above provides an enhancement for the triangulated category $\Box=\Hc\big(\LL(\SSS)\big)=\fl'(\cs)$. But there is no reason to expect
the triangulated category
$\Hc\big(\LL(\SSS)\big)$ to be independent of the enhancement $\SSS$ of
the metric triangulated category $\cs$.
Hence there is no reason to expect that there is only one triangulated category
$\Box=\Hc\big(\LL(\SSS)\big)$ in the output of the map $\LL$ above, as we vary the isomorphism class of enhancements for $\cs$.
\ermk

Still assuming that $\cs$ is a triangulated category with a very good metric $\{\cm_i\}_{i\in\nn}$ such that each of the subcategories $\cm_i\subset\cs$ is closed in $\cs$
under direct summands, let $\fG\colon\cs\la\ct$ be a good extension
with respect to the metric. Assume further that the subcategory $\cs\hookrightarrow\ct$ is characteristic and the metric is characteristic on $\cs$.
Under these assumptions there are well-defined maps $\Phi^?_{\ct,\cs}\colon\EnhA(\ct)\la\EnhA(\cs)$,
and $\LL\colon\EnhA(\cs)\la\EnhA(\Box)$.
It is natural to wonder what, if anything, we can say about the composite. This discussion begins in the next section.

\subsection{Comparing completions}\label{subsec:keyres}

We are now ready to state and prove the main technical result of this section; it  will be applied twice, first in the proof of \autoref{P15.5} and again in the proof of \autoref{P4.5}. We begin by setting up the notation.

\begin{setting}\label{sett:1}
Let $\cs$ be a triangulated category with a very good metric
$\{\cm_i\}_{i\in\NN}$ such that each $\cm_i\subset\cs$ is closed in $\cs$
under direct summands. Let $\fG\colon\cs\la\ct$ be a good extension with respect to
the metric.
\end{setting}

Suppose that $\TT$ is an enhancement of $\ct$ in $\StabInfA$ and let $\SSS:=\Phi^?_{\ct,\cs}(\TT)$. This yields two good extensions of $\cs$ with
respect to the metric $\{\cm_i\}$, and a map between them. First there
is the given good extension $\fG\colon\cs\la\ct\iso\Hc(\TT)$
of \autoref{sett:1},
but there also is the good extension
$\cs\iso\Hc(\SSS)\la\Hc(\IndC(\SSS))$ that comes
from the fact that the essential image of the Yoneda embedding
$\Hc(\Yon)\colon\Hc(\SSS)\la\Hc(\IndC(\SSS))$
is contained in $\Hc(\IndC(\SSS))^c$, see
\cite[Example~\ref*{E21.3}]{Neeman18A}.

Let us denote the two ``completions'' of $\cs$
with respect to the metric $\{\cm_i\}$, defined in \eqref{eq:L'},
by $\fl'_1(\cs)\subset\ct$ and
$\fl'_2(\cs)\subset\Hc(\IndC(\SSS))$. Both are triangulated subcategories.


\pro{L1.5}
In \autoref{sett:1},
the restricted Yoneda functor $\YYon\colon\TT\la\IndC(\SSS)$
has the property that $\Hc(\YYon)\colon\Hc(\TT)\la\Hc(\IndC(\SSS))$
restricts to a triangle equivalence
$\fl'_1(\cs)\la\fl'_2(\cs)$, which yields an isomorphism
\[
\YYon'\colon\Phi^?_{\ct,\fl'_1(\cs)}(\TT)\la\Phi^?_{\Hc(\IndC(\SSS)),\fl'_2(\cs)}\big(\IndC(\SSS)\big)=\LL(\SSS)
\]
in $\HoStabInfA$.
\epro

\prf
Take any object $T\in\fl'_1(\cs)$, and express it
in $\ct$ as $\hoco\fG(S_\bullet)$ for a Cauchy sequence
$S_\bullet$ in $\cs$. Because $\cs=\Hc(\SSS)$, we
can choose in $\SSS$ a sequence $E_\bullet$ of 1-morphisms lifting $S_\bullet$ as in \autoref{L1.4}.

On the other hand the coproduct
$Y:=\coprod_{i\in\nn}\fG(S_i)$ exists in $\ct$. Set $X_i:=\fG(S_i)$. Interpreting $X_i$ and $Y$ as objects in $\TT$
we may choose, for each $i\in\nn$, a 
1-morphism $g_i\colon X_i\la Y$ in $\TT$ lifting the inclusion
of $X_i$ into $Y$ in $\ct$. Also the map $(\id-\shi)\colon Y\la Y$ in $\ct=\Hc(\TT)$ must lift
to a 1-morphism $f\colon Y\la Y$ in $\TT$. This gives 
us in $\IndC(\SSS)$ a 2-commutative square
\[
\xymatrix@C+30pt{
\ds\coprod_{i=1}^\infty \YYon(X_i)\ar[r]^-{h}\ar[d]_-\alpha &
\ds\coprod_{i=1}^\infty \YYon(X_i) \ar[d]^-\alpha\\
\YYon(Y) \ar[r]^-{\YYon(f)} &
\YYon(Y),
}
\]
where $\alpha$ is the natural map induced by the $g_i$ above and $h$ stands for a 1-morphism in $\IndC(\SSS)$ lifting the 
morphism $\id-\shi$ in $\Hc(\IndC(\SSS))$.
Let $M\in\TT$ be the mapping cone of $f\colon Y\la Y$, and let
$P\in\LL(\SSS)$ be the mapping
cone of $\id-\shi$. The data
permits us to construct
a morphism of
cofiber sequences in $\IndC(\SSS)$
\begin{equation}\label{eq:dagger}
\xymatrix@C+30pt{
\ds\coprod_{i=1}^\infty \YYon(X_i)\ar[r]^-{h}\ar[d]_-\alpha &
\ds\coprod_{i=1}^\infty \YYon(X_i) \ar[d]^-\alpha\ar[r] &
P\ar[d]^\beta&\\
\YYon(Y) \ar[r]^-{\YYon(f)} &
\YYon(Y)\ar[r] & \YYon(M).&
}
\end{equation}
Now let $A\in\cs$ be any object
and apply to this diagram the functor
$\Hom_{\Hc(\IndC(\SSS))}(\YYon(\fG(A)),-)$. The fact that $\Hc(\Yon)(\cs)\subset\Hc(\IndC(\SSS))^c$
tells us that this agrees with the commutative diagram
of \autoref{explanationofgoodextension},
and in particular $\Hom_{\Hc(\IndC(\SSS))}(\YYon(\fG(A)),-)$ takes the
map $\beta$ to an isomorphism. Warning: because
we are not assuming any compactness of the objects
of
$\cs$ in the category $\ct$, the
functor $\YYon$ need not
take $\alpha$ to an isomorphism. Anyway, because
$\Hom_{\Hc(\IndC(\SSS))}(\YYon(\fG(A)),-)$ takes the
map $\beta$ to an isomorphism
for all $A\in\cs$, we have that 
the map $\beta\colon P\la\YYon(M)$
is such that $\Hc(\beta)$ is an isomorphism,
meaning $\beta$ is a quasi-isomorphism.
From this it easily
follows that the
functor $\Hc(\YYon)\colon\Hc(\TT)\la\Hc(\IndC(\SSS))$
takes the subcategory $\fl'_1(\cs)\subset\Hc(\TT)$
into the subcategory $\fl'_2(\cs)\subset\Hc(\IndC(\SSS))$,
and the induced functor
$\fl'_1(\cs)\la\fl'_2(\cs)$ is essentially
surjective.

It remains to establish that, for any two objects $M,M'\in\TT$
both
lying in $\fl'_1(\cs)$, 
the natural map
$\Hom_\TT^{}(M,M')\la \Hom_{\IndC(\SSS)}^{}\big(\YYon(M),\YYon(M')\big)$
is a quasi-isomorphism.
And for this the restriction on $M'$ is
unnecessary: we will show that
this map is a quasi-isomorphism for every
$M'\in\TT$.

Now the object $M\in\LL(\SSS)\subset\TT$ comes
with the cofiber sequence $Y\stackrel f\la Y\la M$ in
$\TT$, and the functor $\YYon$ yields a morphism of cofibrations
\[
\scalebox{0.97}{
\xymatrix@C+20pt{
\Hom_\TT^{}(M,M')\ar[r]\ar[d] &
\Hom_\TT^{}(Y,M')\ar[r]^-\ph\ar[d] &
\Hom_\TT^{}(Y,M')\ar[d]
\\
\Hom\big(\YYon(M),\YYon(M')\big) \ar[r] &
\Hom\big(\YYon(Y),\YYon(M')\big) \ar[r]^-{\YYon(f)} &
\Hom\big(\YYon(Y),\YYon(M')\big)
}}
\]
If we apply the functor $\Hom\big(-,\YYon(M')\big)$,
to the map of cofiber sequences in $\eqref{eq:dagger}$, this extends
to a composable
pair of morphisms of cofibrations
\[
\scalebox{0.9}{
\xymatrix@C+20pt{
\Hom_\TT^{}(M,M')\ar[r]\ar[d] &
\Hom_\TT^{}(Y,M')\ar[r]^-{\varphi}\ar[d] &
\Hom_\TT^{}(Y,M')\ar[d]
\\
\Hom\big(\YYon(M),\YYon(M')\big) \ar[r]\ar[d]^{\Hom(\beta,\YYon(M'))} &
\Hom\big(\YYon(Y),\YYon(M')\big) \ar[r]^-{\YYon(f)}\ar[d]^{\Hom(\alpha,\YYon(M'))} &
\Hom\big(\YYon(Y),\YYon(M')\big)\ar[d]^{\Hom(\alpha,\YYon(M'))}
\\
\Hom\big(P,\YYon(M')\big) \ar[r] &
\ds\prod_{i=1}^\infty\Hom\big(\YYon(X_i),\YYon(M')\big) \ar[r]^-{\id-\shi} &
\ds\prod_{i=1}^\infty\Hom\big(\YYon(X_i),\YYon(M')\big)
}}
\]
and the composite is
\[
\scalebox{0.96}{
\xymatrix@C+20pt{
\Hom_\TT^{}(M,M')\ar[r]\ar[d]^{\beta'} &
\Hom_\TT^{}(Y,M')\ar[r]^-\ph\ar[d]^-{\alpha'} &
\Hom_\TT^{}(Y,M')\ar[d]^-{\alpha'}
\\
\Hom\big(P,\YYon(M')\big) \ar[r] &
\ds\prod_{i=1}^\infty\Hom\big(\YYon(X_i),\YYon(M')\big) \ar[r]^-{\id-\shi} &
\ds\prod_{i=1}^\infty\Hom\big(\YYon(X_i),\YYon(M')\big).
}}
\]
As $Y$ is the coproduct in $\ct$ of the objects
$X_i=\fG(S_i)$ with $S_i\in\cs\subset\ct$,
the map $\alpha'$ must be a quasi-isomorphism,  and hence
so is $\beta'$. But $\beta'$ is the composite 
\[
\scalebox{0.97}{
\xymatrix{
\Hom_\TT^{}(M,M')\ar[r]\ar@/^3pc/[rrrr]^-\sim
& \Hom_{\IndC(\SSS)}^{}\big(\YYon(M),\YYon(M')\big)
\ar[rrr]^-{\Hom(\beta,\YYon(M'))} &&& \Hom_{\IndC(\SSS)}^{}\big(P,\YYon(M')\big).
}}
\]
On the other hand $\beta\colon P\la\YYon(M)$ is a
quasi-isomorphism in $\IndC(\SSS)$ by the previous discussion. The
full faithfulness of $\fl'_1(\cs)\la\fl'_2(\cs)$
immediately follows.
\eprf

\subsection{Good metrics and enhancements}
\label{S15}

The first application is to specialize \autoref{L1.5}
to the situation where $\fl'(\cs)=\ct$. We need now to update \autoref{sett:1} as follows:

\begin{setting}\label{sett:2}
Assume we are in \autoref{sett:1}, and suppose further that:
\be
\item
The inclusion $\fl'(\cs)\hookrightarrow\ct$ is an equality;
\item
The subcategory $\cs\subset\ct$ is
characteristic;
\item
The metric $\{\cm_i\}_{i\in\nn}$,
on the category $\cs$, is
characteristic.
\ee
\end{setting}

The result is then the following.

\pro{P15.5}
with the assumptions as in \autoref{sett:2},
the map of \autoref{restrictenhancements}
\[
\Phi^?_{\ct,\cs}\colon\Enhq(\ct)\la\Enhq(\cs)
\]
is a monomorphism. And more precisely:
with $\LL$ the map of \autoref{theconstructionL}
we have that the composite
\[\xymatrix@C+20pt{
\Enhq(\ct)
\ar[r]^-{\Phi^?_{\ct,\cs}}
&
\Enhq(\cs)
\ar[r]^-{\LL}
&
\Enhq(\Box)
}\]
takes to itself any isomorphism class $[\TT]$, of
objects
$\TT\in\HoStabInfA$ satisfying $\Hc(\TT)\iso\ct$.
\epro

\prf
First of all, since $\cs\subset\ct$ is a characteristic subcategory,
\autoref{R1.25} guarantees that $\Phi^?_{\ct,\cs}$ gives
a well-defined map $\Phi^?_{\ct,\cs}\colon\Enhq(\ct)\la \Enhq(\cs)$.
And because the metric
$\{\cm_i\}$
on the category $\cs$ is
assumed characteristic,
\autoref{rmk:thedefinitionisok}
tells us that the $\LL$
of \autoref{theconstructionL}
yields a well-defined
map $\LL\colon\Enhq(\cs)\la\Enhq(\Box)$.
It follows that the composite
\[\xymatrix@C+20pt{
\Enhq(\ct)\ar[r]^-{\Phi^?_{\ct,\cs}} & \Enhq(\cs)\ar[r]^-\LL & \Enhq(\Box)
}\]
is well-defined. But by \autoref{sett:2} (i)
we are assuming 
$\fl'_1(\cs)=\ct$, and \autoref{L1.5} produces
a (explicit) isomorphism
$\TT\la\LL(\Phi^?_{\ct,\cs}(\TT))$ in $\HoStabInfA$. This proves the
second assertion of the statement, and the first is
an immediate consequence of the second.
\eprf

\rmk{actualmapagain}
As in \autoref{actualmap} we
note that
the proof shows a little more than
the statement of 
\autoref{lem:uniqenhTsb}
asserts: given any $\TT$ with $\Hc(\TT)\iso\ct$,
the proof produces a morphism
$\TT\la\LL(\Phi^?_{\ct,\cs}(\TT))$
in the category $\StabInfA$, and proves
that it is an isomorphism in $\HoStabInfA$.
What is more: this map is natural in $\TT$,
it is nothing more than a restriction of
the Yoneda map
$\YYon\colon\TT\la\IndC(\Phi^?_{\ct,\cs}(\TT))$.

And once again this naturality will play a role in
\autoref{subsect:autoequiv}.
\ermk

We draw attention to the following easy consequence of the result above.

\cor{C1.9}
In \autoref{sett:2},
if $\cs$ has a unique (linear) enhancement, then $\ct$ has at most one (linear) enhancement.
\ecor

\rmk{rmk:back}
Looking more carefully at \autoref{C1.9}, we note that \autoref{P15.5} even
permits us to go backwards and recover,
up to isomorphism in $\HoStabInfA$, both the triangulated category
$\ct$ and its unique enhancement from the triangulated category
$\cs$ and its metric.
Precisely, if $\cs$ has a unique (linear) enhancement $\SSS$ and $\ct=\fl'(\cs)$ has some (linear) enhancement, then $\ct$ and its enhancement are
uniquely determined by the formula
$\ct=\Hc(\LL(\SSS))$ having $\LL(\SSS)$ as its unique enhancement.
\ermk

\section{The restriction functor for characteristic subcategories}
\label{S2}

This short section is about applications of \autoref{P15.5}. Suppose $\ct$ is a weakly approximable triangulated category  and $\ca\subset\cb$ is a pair of the natural  subcategories of $\ct$ in the diagram \eqref{eq:incl}. It turns out that, with suitable choices of metrics on $\ca$,  there are five pairs $(\ca,\cb)$ to which \autoref{P15.5} applies. In  four of the cases the proof is essentially identical, and we will give a unified argument in \autoref{subsec:genappl1}. And then \autoref{subsec:applwa} explains in detail how the general argument specializes to the four pairs $(\ca,\cb)$ we mentioned, and then show how to apply \autoref{P15.5} to the fifth pair.

\subsection{General results}\label{subsec:genappl1}

We begin by fixing the following:

\begin{setting}\label{sett:3}
Let $\ct$ be a triangulated category with countable
coproducts, and let $\tst\ct$ be a
nondegenerate \tstr\ on $\ct$.
Assume further that:
\be
\item There exists an
integer $r>0$ such that, for any
countable set of
objects $\{X_i,\,i\in\nn\}$ contained
in $\ct^{\geq0}$, the coproduct
$\coprod_{i\in\nn}X_i$ belongs to $\ct^{\geq-r}$;

\item The \tstr\
$\tst\ct$ is characteristic, as is its
restriction to the subcategories $\ct^\star$, where $\star$ can be any of the symbols $-,+,b$.
\ee
\end{setting}

\exm{ex:appl1}
For any weakly approximable triangulated category $\ct$ we may choose a
compactly generated \tstrs $\tst\ct$ in the preferred equivalence class, and
all assumptions in \autoref{sett:3} are satisfied both by $\ct$ with $\tst\ct$ and by $\ct\op$ with the induced \tstr\ $\tst{\big(\ct\op\big)}$. After all, the category $\ct$, being compactly generated, contains all small coproducts and products of its objects.
Because the \tstr\ is compactly generated, \cite[Proposition~A.2]{Alonso-Jeremias-Souto03} tells us that
the subcategory $\ct^{\geq0}$ is closed in $\ct$ under coproducts. And by
\cite[Lemma~3.2]{Canonaco-Neeman-Stellari24}
there exists an integer $r>0$ such that, for any
collection $\{D_\lambda\}_{\lambda\in\Lambda}$
of objects in $\ct^{\leq 0}$, the product
$\prod_{\lambda\in\Lambda}D_\lambda$ belongs to $\ct^{\leq r}$.
Furthermore, in \cite[Propositions~5.3, 5.7 and 5.8]{Canonaco-Neeman-Stellari24}
the reader can find the facts that the
(respective) restrictions,
to $\ct^-$,
$\ct^+$ and $\ct^b$, of a \tstr\ in
the preferred equivalence class, are all
characteristic.
\eexm

Next we consider the following easy consequence of the results in the previous section.

\lem{E2.1}
Let $\ct$ be a triangulated category as in \autoref{sett:3}.
Then
\be
\item
The restriction map
$\Phi^?_{\ct,{\ct^-}}\colon\Enhq(\ct)\la\Enhq(\ct^-)$ is injective.
Furthermore, if $\ct^-$ has a unique enhancement
$\SSS$ then, assuming
$\ct$ has an enhancement,
the category $\ct$ and its unique enhancement
are given by
the formula $\ct=\Hc\big(\LL(\SSS)\big)$.
\item
The restriction map
$\Phi^?_{\ct^+,\ct^b}\colon\Enhq(\ct^+)\la\Enhq(\ct^b)$ is injective.
Furthermore, if $\ct^b$ has a unique enhancement
$\SSS$ then, assuming
$\ct^+$ has an enhancement,
the category $\ct^+$ and its unique enhancement
are given by
the formula $\ct^+=\Hc\big(\LL(\SSS)\big)$.
\ee
\setcounter{enumiv}{\value{enumi}}
\elem

\prf
Since the \tstr{s} are assumed to be
characteristic on all the subcategories,
and since the subcategories are defined in terms
of the \tstr{s}, in the commutative square
\[\xymatrix{
\ct^b\ar@{^{(}->}[r] \ar@{^{(}->}[d] &
\ct^- \ar@{^{(}->}[d] \\
\ct^+\ar@{^{(}->}[r] &\ct
}\]
the inclusions are all characteristic.
Next we apply \autoref{P15.5}. More precisely:
\be
\setcounter{enumi}{\value{enumiv}}
\item
To prove (i) we
let $\cs=\ct^-$, let
the metric be given by
$\cm_i=\ct^-\cap\ct^{\geq i}$,
and let the good extension be the
inclusion $\cs=\ct^-\la\ct$.
From \cite[Example~7.9(i)]{Neeman25}
we learn that this is a good extension with $\fl'(\cs)=\ct$.
\item
To prove (ii) we
let $\cs=\ct^b$, let
the metric be given by
$\cm_i=\ct^b\cap\ct^{\geq i}$,
and let the good extension be the
inclusion $\cs=\ct^b\la\ct^+$.
From \cite[Example~7.9(ii)]{Neeman25}
we learn that this is a good extension with $\fl'(\cs)=\ct^+$.
\ee
It is clear 
that the hypotheses of \autoref{P15.5}
all hold, and the Lemma follows.
\eprf

\subsection{The weakly approximable case}\label{subsec:applwa}

Let us specialize considerably: assume now that $\ct$ is a
weakly approximable triangulated category. By \autoref{ex:appl1} we may apply \autoref{E2.1}  both to $\ct$ with the \tstr\ $\tst\ct$, and to $\ct\op$
with the induced \tstr\ $\tst{\big(\ct\op\big)}$, and deduce the following:

\pro{E2.5}
Let $\ct$ be a weakly approximable triangulated category.
\be
\item
The restriction map
$\Phi^?_{\ct,\ct^-}\colon\Enhq(\ct)\la\Enhq(\ct^-)$ is injective. Furthermore, if $\ct^-$ has a unique enhancement
$\SSS$ then, assuming
$\ct$ has an enhancement,
the category $\ct$ and its unique enhancement
are given by
the formula  $\ct=\Hc\big(\LL(\SSS)\big)$.
\item
The restriction map
$\Phi^?_{\ct^+,\ct^b}\colon\Enhq(\ct^+)\la\Enhq(\ct^b)$ is injective.
Furthermore, if $\ct^b$ has a unique enhancement
$\SSS$ then, assuming
$\ct^+$ has an enhancement,
the category $\ct^+$ and its unique enhancement
are given by
the formula  $\ct^+=\Hc\big(\LL(\SSS)\big)$.
\item
The restriction map
$\Phi^?_{\ct,\ct^+}\colon\Enhq(\ct)\la\Enhq(\ct^+)$ is injective.
Furthermore, if $\ct^+$ has a unique enhancement
$\SSS$ then, assuming
$\ct$ has an enhancement,
the category $\ct$ and its unique enhancement
are given by
the formula  $\ct\op=\Hc\big(\LL(\SSS\op)\big)$.
\item
The restriction map
$\Phi^?_{\ct^-,\ct^b}\colon\Enhq(\ct^-)\la\Enhq(\ct^b)$ is injective.
Furthermore, if $\ct^b$ has a unique enhancement
$\SSS$ then, assuming
$\ct^-$ has an enhancement,
the category $\ct^-$ and its unique enhancement
are given by
the formula
$\big(\ct^-\big)\op=\Hc\big(\LL(\SSS\op)\big)$.
\ee
\epro

We can also prove the following.

\pro{cor:applwaTsb}
If $\ct$ is a weakly approximable triangulated category, then the restriction map
$\Phi^?_{\ct^-,\tsb}\colon\Enhq(\ct^-)\la\Enhq(\tsb)$ is injective. Assume that $\tsb$ has a unique enhancement $\RR$ and that $\ct^-$ has an enhancement. Then the latter is unique,
and both $\ct^-$ and
its unique enhancement  
can be recovered by the formula
$\ct^-=\Hc\big(\LL(\RR)\big)$.
\epro

\prf
From \autoref{ex:excelmetriccompact},
we know that the subcategory $\tsb\subset\ct^-$,
together with the metric $\{\cn\op_i\}_{i\in\nn}$ given by
the formula
\[
\cn\op_i=\tsb\cap\ct^{\leq-i}\ ,
\]
is a triangulated category with an excellent metric.
The formula of \autoref{D0.7} makes it clear
that $\tsb\subset\ct^-$ is characteristic,
and in \autoref{lem:charmetric} we saw
that the metric on $\tsb$ is characteristic.
Furthermore, \cite[Examples~\ref*{E96.7}]{Neeman25}
proves that the embedding $\tsb\subset\ct^-$ is
a good extension, with $\fl'\big(\tsb\big)=\ct^-$.
And the fact that $\cn\op_i=\tsb\cap\ct^{\leq-i}$ is
closed in $\tsb$ under direct summands is
immediate
from the formula. Thus,
\autoref{C1.9} and \autoref{rmk:back} imply the result.
\eprf

\section{Triangulated categories with excellent metrics, and their enhancements}
\label{S4}

The last important technical tool of
this atricle is the enhanced version of
passing, from a triangulated category $\cs$
with a good metric $\{\cm_i\}$, to the triangulated
category $\fs(\cs)$ with the induced good metric
$\{\cn_i\}$. This procedure takes an object
in $\SSS\in\StabInfA$ with $\Hc(\SSS)\iso\cs$ to
an object $\Psi(\SSS)\in\StabInfA$
with
$\Hc(\Psi(\SSS))\iso\fs(\cs)$.
We will meet
it in \autoref{theconstructionpsi}
and study its basic properties in
\autoref{subsec:constr} and
\autoref{subsec:backforth}.

This completes our toolkit. We remind the reader
that we already have
\be
\item
The restriction map $\Phi^?_{\ct,\cs}$ of
\autoref{restrictenhancements},
which takes a pair of triangulated categories
$\cs\subset\ct$ and an object
$\TT\in\StabInfA$ with $\Hc(\TT)\iso\ct$,
to the object
$\Phi^?_{\ct,\cs}(\TT)\in\StabInfA$ with $\Hc(\Phi^?_{\ct,\cs}(\TT))\iso\cs$.
\item
The map $\Delta$
of \autoref{constrtenhTsb},
which takes a triangulated
category $\cs$ with a classical generator,
and an object
$\SSS\in\StabInfA$ with $\Hc(\SSS)\iso\cs$,
to the object $\Delta(\SSS)\in\StabInfA$.
We remind the reader that if $\cs=\ct^c$
then, under suitable hypotheses,
\autoref{lem:uniqenhTsb} proved that
$\Hc(\Delta(\SSS))\iso\tsb$.
\item
If
$\cs$
is a triangulated category
with a very good metric $\{\cm_i\}$,
and if each $\cm_i$ is closed in
$\cs$ under direct summands, then
there is also the
$\LL$ of
\autoref{theconstructionL}.
It takes an
object
$\SSS\in\StabInfA$ with $\Hc(\SSS)\iso\cs$
to the object $\LL(\SSS)\in\StabInfA$.
And $\Hc(\LL(\SSS))$ is some sort of
``best possible'' $\fl'(\cs)$,
in the sense of \autoref{L1.5}.
By this we mean that given any good extension
$\cs\subset\ct$ with respect to the
metric $\{\cm_i\}$, we can form in $\ct$
the subcategory $\fl'(\cs)$; under the
hypotheses on the metric the subcategory
$\fl'(\cs)$ must be triangulated. The construction
seems to depend on the choice of good embedding
$\cs\subset\ct$. But 
\autoref{theconstructionL}
tells us that, as long as $\ct$ has an enhancement,
then its triangulated subcategory $\fl'(\cs)\subset\ct$
must be equivalent to $\Hc(\LL(\SSS))$.
\ee
After setting up this extensive toolkit, in
\autoref{subsec:waen}
and \autoref{subsec:lifting}
we bring it all together and prove the main theorems
of the introduction.

\subsection{Passing from enhancements of $\cs$ to enhancements of $\fs(\cs)$}\label{subsec:constr}

As we explained in \autoref{subsec:metrics}, given a triangulated category $\cs$ with a good metric $\{\cm_i\}_{i\in\nn}$,
we may form the category $\fs(\cs)$ as defined in \eqref{eq:sigma},
and it is triangulated by \autoref{rmk:Stria}. Assume now that $\SSS$
is an enhancement of $\cs$ in $\HoStabInfA$. We can then take the good extension
$\Hc(\Yon)\colon\Hc(\SSS)\la\Hc(\IndC(\SSS))$ with respect to the given metric. Next, take in $\Hc(\IndC(\SSS))$ the subcategory
$\fl'(\cs)$ as in \eqref{eq:L'},
and then we obtain a triangle equivalence \cite[Corollary~\ref*{C21.15}]{Neeman18A}
\[
\fs(\cs)\iso\fl'(\cs)\cap\YYon^{-1}\big(\fc(\cs)\big),
\]
where $\YYon\colon\Hc(\IndC(\SSS))\la\MMod\cs$ is the restricted Yoneda functor.
Note that we appeal to \cite[Theorem~\ref*{T20.17}]{Neeman18A}
to show that the equivalence
of \cite[Corollary~\ref*{C21.15}]{Neeman18A}
respects
the triangulated structure. Following the abuse of notation
established in \cite[Remark~\ref*{R93.-11}]{Neeman25},
we will write $\fs(\cs)$ both for the
subcategory of the abelian category $\MMod\cs$ and for $\fl'(\cs)\cap\YYon^{-1}\big(\fc(\cs)\big)$ in
$\Hc(\IndC(\SSS))$.

\con{theconstructionpsi}
With $\SSS$ an object of
$\HoStabInfA$ and given a good metric
$\{\cm_i\}$ on $\cs=\Hc(\SSS)$, we define
\[
\Psi(\SSS):=\Phi^?_{\Hc(\IndC(\SSS)),\fs(\cs)}(\IndC(\SSS))
\]
with the embedding $\fs(\cs)\subset\Hc(\IndC(\SSS))$
as in the discussion above.
\econ

\rmk{psiokforcharacteristic}
It is not hard to see that if the metric $\{\cm_i,\,i\in\nn\}$ is characteristic,
then the $\Psi$ of
\autoref{theconstructionpsi}
respects quasi-equivalences. Hence it
induces a map
\[
\Psi\colon\Enhq(\cs)\la\Enhq\big(\fs(\cs)\big)
\]
which takes the equivalence class $[\SSS]$, of
enhancements of $\cs$, to
the equivalence class $[\Psi(\SSS)]$ of
enhancements of $\fs(\cs)$. Note that this time the output triangulated
category is unambiguous;
$\Hc(\Psi(\SSS))$ is always triangle equivalent to
$\fs(\cs)$, irrespective of the enhancement $\SSS$.
\ermk

\rmk{Rreminerofinclusion}
Let $\cs$ be a triangulated category with a good
metric $\{\cm_i\}_{i\in\nn}$.
By \cite[Remark~\ref*{R200978}]{Neeman18A}
we know that the subcategory $\cs\cap\fs(\cs)$,
where the intersection is formed
inside $\MMod\cs$, is always a triangulated subcategory
inside each of $\cs$ and $\fs(\cs)$. Furthermore
the two triangulated structures
on $\cs\cap\fs(\cs)$, the one coming
from the embedding into $\cs$ and the one
coming from the embedding into $\fs(\cs)$,
always agree with each other.
It makes sense to ask that
$\cs\cap\fs(\cs)=\cs$, which we will write
as $\cs\subset\fs(\cs)$. And
\cite[Remark~\ref*{R29876}]{Neeman18A}
assures us that this condition can be tested
in
any good extension with respect to the metric.
\ermk

In the special case where $\cs\subset\fs(\cs)$
the map $\Psi$ above satisfies
the following, easy analog of
\autoref{L1.5}.

\lem{Leasyanalog}
Let $\cs$ be a triangulated category with a good
metric $\{\cm_i\}_{i\in\nn}$, and suppose further that
$\cs\subset\fs(\cs)$,
as in \autoref{Rreminerofinclusion}.
Let $\TT$ be an enhancement of $\fs(\cs)$
and let $\SSS=\Phi^?_{\fs(\cs),\cs}(\TT)$.
Then
the restricted Yoneda functor $\YYonInf\colon\TT\la\IndC(\SSS)$
factors through $\Psi(\SSS)\subset\IndC(\SSS)$,
and the map $\TT\la\Psi(\SSS)$ is an isomorphism
in $\HoStabInfA$.
\elem

\prf
It suffices to prove
the triangulated category
statements, namely that the
triangulated functor
$\Hc(\YYonInf)\colon\Hc(\TT)\la\Hc(\IndC(\SSS))$
factors through $\Hc(\Psi(\SSS))$,
and the induced map
$\Hc(\TT)\la\Hc(\Psi(\SSS))$ is an equivalence.

We first prove the factorization assertion.
As $\cs=\Hc(\SSS)$ is a subcategory
of $\Hc(\IndC(\SSS))$, it gives
an ordinary, unenriched restricted Yoneda
functor
$\YYon\colon\Hc(\IndC(\SSS))\la\MMod\cs$.
Now take an object $A\in\TT$.
By assumption $\TT$ is an enhancement for
$\fs(\cs)$, which means that
the natural Yoneda functor
$\Hc(\TT)\la\MMod\cs$ has image
$\fs(\cs)\subset\MMod\cs$.
Thus $\YYon\comp\Hc(\YYonInf)(A)$
is an object of $\fs(\cs)\subset\MMod\cs$.
Therefore there exists an object
$B\in\Psi(\SSS)$ with $\YYon(B)\iso \YYon\comp\Hc(\YYonInf)(A)$.
The fact that $B\in\cl'(\cs)$,
together with
\cite[Lemma~\ref*{L21.11}]{Neeman18A},
allows  us to lift this to a morphism
$\ph\colon B\la\Hc(\YYonInf)(A)$ in
$\Hc(\IndC(\SSS))$. Now $\YYon$ takes
$\ph$ to an isomorphism in $\MMod\cs$,
meaning that $\Hom(C,\ph)$ is an isomorphism
for all objects $C\in\cs$.
But $\cs$ generates the triangulated
category $\Hc(\IndC(\SSS))$;  we deduce that
the map $\ph$
must be an isomorphism in $\Hc(\IndC(\SSS))$. 
Hence
the Yoneda functor
$\YYonInf\colon\TT\la\IndC(\SSS)$
factors through $\Psi(\SSS)\subset\IndC(\SSS)$.

But
$\Psi(\SSS)$ and therefore its subcategory
 $\YYonInf(\TT)$ both lie
in the subcategory
$\fl'(\cs)\cap\YYonInf^{-1}\big(\fc(\cs)\big)$
of $\Hc(\IndC(\SSS))$, and on
this subcategory
the map $\YYon\colon\Hc(\IndC(\SSS))\la\MMod\cs$
restricts to a
triangle equivalence with $\fs(\cs)$;
see \cite[Corollary~\ref*{C21.15} and Theorem~\ref*{T20.17}]{Neeman18A}.
In other words we have the following commutative
triangle
\[\xymatrix{
\Hc(\TT)
\ar[rr]^-{\Hc(\YYonInf)}
\ar[dr]_-{\raisebox{+4pt}[0pt][0pt]{\rotatebox{-45}{$\sim$}}}
& &
\Hc(\Psi(\SSS))
\ar[dl]^-{\raisebox{-4pt}[0pt][0pt]{\rotatebox{+45}{$\sim$}}}
\\
&
\fs(\cs)
}\]
where the slanted arrows are equivalences, the
one on the left by the hypotheses of the Lemma and
the one on the right by
construction. Hence
the horizontal map is an equivalence.
\eprf

\cor{Corofeasyanalog}
Let $\cs$ be a triangulated category with a good
metric $\{\cm_i\}_{i\in\nn}$,
and also assume
\be
\item
The metric is characteristic.
\item
We have $\cs\subset\fs(\cs)$
as in \autoref{Rreminerofinclusion}.
\ee
Then the natural map
$\Psi\colon\Enhq(\cs)\la\Enhq(\fs(\cs))$
is surjective. If we further assume that
$\cs\subset\fs(\cs)$ is a
characteristic subcategory then the
map $\Psi$ is bijective, with the inverse
given by
$\Phi^?_{\fs(\cs),\cs}\colon\Enhq(\fs(\cs))\la \Enhq(\cs)$.
\ecor

\prf
We are assuming that the metric is characteristic,
and hence the map 
$\Psi\colon\Enhq(\cs)\la\Enhq(\fs(\cs))$
is well-defined (see \autoref{psiokforcharacteristic}). From \autoref{Leasyanalog}
we know that any enhancement $\TT$ of
$\fs(\cs)$ is equivalent to $\Psi(\SSS)$ for
some $\SSS\in\Enhq(\cs)$, proving the
surjectivity.

If $\cs\subset\fs(\cs)$ is characteristic then the map
$\Phi^?_{\fs(\cs),\cs}\colon\Enhq(\fs(\cs))\la \Enhq(\cs)$
is also well-defined. The composite
$\Phi^?_{\fs(\cs),\cs}\comp\Psi$ takes an
enhancement $\SSS$ of $\cs$ to
$\Phi^?_{\Hc(\IndC(\SSS)),\cs}\IndC(\SSS)=\SSS$, and hence
is the identity. This implies that $\Psi$ is also injective. 
\eprf

\rmk{actualmapagainagain}
As in \autoref{actualmap}
and in \autoref{actualmapagain}, we
note that
the proof shows a little more than
the statement of 
\autoref{Corofeasyanalog}
asserts: given any $\TT$ with $\Hc(\TT)\iso\fs(\ct)$,
the proof produces a morphism
$\TT\la\Psi\comp\Phi^?_{\fs(\cs),\cs}(\TT)$
in the category $\StabInfA$, and proves
that it is an isomorphism in $\HoStabInfA$.
What is more: this map is natural in $\TT$,
it is nothing more than a restriction of
the Yoneda map
$\YYon\colon\TT\la\IndC(\Phi^?_{\fs(\cs),\cs}(\TT))$.

And once again this naturality will play a role in
\autoref{subsect:autoequiv}.
\ermk

Next we drop the assumption that
$\cs\subset\fs(\cs)$, but restrict the
class of allowed metrics. The following lemma will
play a key role in understanding this situation.

\lem{L4.3}
Let $\cs$ be a triangulated category with an excellent
metric $\{\cm_i\}_{i\in\nn}$, and suppose that we are given
an enhancement $\SSS$ of $\cs$ in $\HoStabInfA$.
Then the following is true:
\be
\item
The embedding 
$\fs(\cs)\op=\Hc\big(\Psi(\SSS)\big)\op\la\Hc(\IndC(\SSS))\op$
is a good extension with respect to the
metric $\{\cn\op_i\}_{i\in\nn}$ on $\fs(\cs)\op$, where $\cn_i$ is as in \eqref{eq:metrN}.
\item
As full subcategories
of $\Hc(\IndC(\SSS))\op$,
we have an equality
\[
\fl'(\cs)\op=\fl'\big(\fs(\cs)\op\big).
\]
\setcounter{enumiv}{\value{enumi}}
\ee
\elem

\prf
The compactly generated triangulated category
$\Hc(\IndC(\SSS))$ has products, and hence
the opposite category
$\Hc(\IndC(\SSS))\op$ has coproducts. Thus
we can form in $\Hc(\IndC(\SSS))\op$
all homotopy colimits and, in particular,
the homotopy colimits of Cauchy sequences
in $\fs(\cs)\op$.

To prove (i), it remains to show that the requirement in \autoref{def;goodext} is satisfied, as in \autoref{explanationofgoodextension}. And the proof of this fact,
as well as the proof of (ii) above, both require us to understand
the homotopy colimits,
in $\Hc(\IndC(\SSS))\op$,
of Cauchy sequences in $\fs(\cs)\op$.

To this end recall that, given any Cauchy sequence
$B_\bullet$ in $\fs(\cs)\op$, the homotopy colimit
is obtained by forming
in $\Hc(\IndC(\SSS))$ the distinguished triangle
\[
\xymatrix{
\holim B_\bullet\ar[r]^-\ph &
\ds\prod_{i=1}^\infty B_i\ar[rr]^-{\id-\shi} &&
\ds\prod_{i=1}^\infty B_i\ar[r] &
\sh{\holim B_\bullet}.
}\]
Note that we are using here that the stable presentable $\infty$-category $\IndC(\SSS)$ is closed under small limits (and thus small products). For this see \cite[Proposition 5.5.2.4]{Lurie09}.

The next step is to apply the restricted Yoneda functor $\YYon\colon\Hc(\IndC(\SSS))\la\MMod\cs$,
which amounts to studying
what happens when we apply 
$\Hom_{\Hc(\IndC(\SSS))}(A,-)$ to the triangle, with $A\in\cs$.
By \cite[Lemma~12.10(i)]{Neeman18A}, for $A\in\cs$ and $B_\bullet$
a Cauchy sequence in $\fs(\cs)\op$, we have 
$\climone \Hom_{\Hc(\IndC(\SSS))}(A,B_\bullet)=0$. Hence the map
\[\xymatrix{
\ds\Hom\left(A,\prod_{i=1}^\infty B_i\right)\ar[rr]^-{\id-\shi} &&
\ds\Hom\left(A,\prod_{i=1}^\infty B_i\right)
}\]
is an epimorphism. The long exact sequence, obtained
by applying the functor
$\Hom(A,-)$ to the triangle,
breaks down into short exact sequences, and thus we obtain
an isomorphism
\[
\Hom\big(A,\holim B_\bullet\big)\la\clim\Hom(A,B_\bullet)\ .
\]
As this is true for every object
$A\in\cs$, we have shown that $\YYon$ satisfies
\[
\YYon\big(\holim B_\bullet\big)=\clim\YYon(B_\bullet),
\]
where $\YYon(B_\bullet)$ stands for the Cauchy sequence whose terms are the objects $\YYon(B_i)$.

Now, combining \cite[Lemmas~\ref*{L3.92929} and \ref*{L3.90909}]{Neeman25},
we can find an object $\wt B\in\cl(\cs)\subset\MMod\cs$
and an isomorphism $\wt B\la\clim B_\bullet$, which can be
rewritten as an isomorphism
$\wt B\la\YYon\big(\holim B_\bullet\big)$.
By \cite[Observation~\ref*{O21.9}]{Neeman18A} there
exists an object $B\in\fl'(\cs)\subset\Hc(\IndC(\SSS))$
with $\YYon(B)\iso\wt B$, and hence
we have, in $\MMod\cs$, an isomorphism
$\eta\colon\YYon(B)\la\YYon\big(\holim B_\bullet\big)$ with $B\in\fl'(\cs)$.
Now \cite[Lemma~\ref*{L21.11}]{Neeman18A} tells
us that there is a morphism
$\ph\colon B\la\holim B_\bullet$, in
$\Hc(\IndC(\SSS))$, lifting $\eta$.
By construction, for every object $S\in\cs$, the functor
$\Hom(S,-)$ takes $\ph$ to an isomorphism and since $\cs=\Hc(\SSS)$ compactly generates
the triangulated category $\Hc(\IndC(\SSS))$,
we deduce that $\ph$ is an isomorphism
in $\Hc(\IndC(\SSS))$. That is,
any object $\holim B_\bullet$ in $\fl'\big(\fs(\cs)\op\big)$ is isomorphic to an object
$B\in\fl'(\cs)\op$, giving the inclusion
$\fl'(\cs)\op\supset\fl'\big(\fs(\cs)\op\big)$.

To conclude the proof of (ii) we establish the reverse inclusion. We already know that, for any Cauchy sequence
$B_\bullet$ in $\fs(\cs)\op$, there exists an object
$B\in\fl'(\cs)$ and an isomorphism $B\iso\holim B_\bullet$, which induces an isomorphism
$\YYon(B)\iso\clim\YYon(B_\bullet)$.

Choose any object $A\in\fl'(\cs)$. By 
\cite[Lemma~\ref*{L3.7} (iii)]{Neeman25}, applied
to the object $\YYon(A)\in\fl(\cs)$, there
exists in $\fs(\cs)\op$ a Cauchy sequence
$B_\bullet$ with a map $\psi\colon\YYon(A)\la\clim\YYon(B_\bullet)$,
and by \cite[Lemma~\ref*{L3.90909}]{Neeman25}
the map $\psi$ is an isomorphism.
By the previous discussion, we have that $B=\holim B_\bullet$ is an object of $\fl'(\fs(\cs)\op)\subset\fl'(\cs)\op$ with 
$\YYon(B)\iso\clim\YYon(B_\bullet)$.
Therefore $\YYon(A)\iso\YYon(B)$.
By \cite[Corollary~\ref*{C21.902}]{Neeman18A},
we can lift this isomorphism in
$\fl(\cs)$ to a morphism
$\ph\colon A\la B$ in $\fl'(\cs)$,
and by \cite[Corollary~\ref*{C21.908})]{Neeman18A}
the map $\ph$ must be an isomorphism.
Thus any $A\in\fl'(\cs)$ is isomorphic
to $\holim B_\bullet$ for some Cauchy sequence
$B_\bullet$ contained in $\fs(\cs)\op$, proving
that $\fl'(\cs)\op\subset\fl'\big(\fs(\cs)\op\big)$.

And finally we turn
to the proof of (i). Let $A\in\fl'(\cs)$ be any object
and choose a Cauchy sequence
$A_\bullet$ in $\cs$ with $A=\hoco A_\bullet$. From
\cite[Lemma~\ref*{L94.657} (ii)]{Neeman25}
we know that, for any object $B\in\fs(\cs)$,
we have $\climone\Hom(A_\bullet,B)=0$.
But then \cite[Lemma~\ref*{L21.13}]{Neeman18A}
gives that
\[
\Hom_{\Hc(\IndC(\SSS))}(A,B)\la\Hom_{\MMod\cs}\big(\YYon(A),\YYon(B)\big)
\]
is an isomorphism.
Now, let $B_\bullet$ be a Cauchy sequence in $\fs(\cs)\op$,
and choose an object $B\in\fl'(\cs)$,
with $B\iso\holim B_\bullet$, and with
$\YYon(B)\iso\clim \YYon(B_\bullet)$.
Then, for any object $C\in\fs(\cs)$, we
have isomorphisms
\begin{gather*}
\Hom\big(\holim B_\bullet,C\big)\iso
\Hom(B,C)\iso\Hom\big(\YYon(B),\YYon(C)\big)\\\iso\Hom(\clim \YYon(B_\bullet),\YYon(C))\iso\colim\,\Hom(B_i,C)
\end{gather*}
where the last isomorphism is by
\cite[Lemma~\ref*{L3.90909}]{Neeman25}.
And the existence of the isomorphism from the last to the first term proves that
$\fs(\cs)\op\la\Hc(\IndC(\SSS))\op$ is a good extension.
\eprf

\subsection{Going back and forth}\label{subsec:backforth}

The discussion in the previous section leads to the following result, which is key to many of our applications.

\pro{P4.5}
Let $\cs$ be a triangulated category with an excellent
metric $\{\cm_i\}_{i\in\nn}$ such that
$\cs$ and its subcategories $\cm_i$ are closed
in $\fl(\cs)$ under direct summands. Assume that $\SSS$ is an enhancement of $\cs$ in $\HoStabInfA$. Then there is an (explicit) isomorphism
\[\xymatrix@C+30pt{
\SSS\la\Psi\big(\Psi(\SSS)\op\big)\op\
}\]
in $\HoStabInfA$.
\epro

\prf
By \autoref{L4.3} (i), the
inclusion $\fs(\cs)\op\la\Hc(\IndC(\SSS))\op$ is
a good extension with respect to the
metric $\{\cn\op_i,\,i\in\nn\}$, where $\cn_i$ is as in \eqref{eq:metrN}.

Set $\UU:=\Psi(\SSS)$. We have an isomorphism in $\HoStabInfA$
\begin{eqnarray*}
\left(\Phi^?_{\Hc(\IndC(\SSS)),\fl'(\cs)}\big(\IndC(\SSS))\right)\op&=&
  \Phi^?_{\Hc(\IndC(\SSS))\op,\fl'_1\big(\fs(\cs)\op\big)}\big(\IndC(\SSS)\op\big)\\
&\iso & \Phi^?_{\Hc\big(\IndC(\UU\op)\big),\fl'_2\big(\fs(\cs)\op\big)}\big(\IndC(\UU\op)\big),
\end{eqnarray*}
the equality just amounts to
distributing the $\{-\}\op$ over the three terms
inside the brackets, while the last isomorphism is the restriction to the subcategories of
the natural restricted
Yoneda functor for the inclusion
$\UU\op\subset\big(\IndC(\SSS)\big)\op$
(a non-isomorphism!) $\YYonInf\colon\big(\IndC(\SSS)\big)\op\la\IndC(\UU\op)$. 
The fact that the restriction to the subcategories
is an isomorphism was proved in
\autoref{L1.5}. Note that we also appeal to
\autoref{L4.3} (ii), which tells
us that, as subcategories of
$\Hc(\IndC(\SSS))\op$, we have $\fl'(\cs)\op=\fl_1'\big(\fs(\cs)\op\big)$.

And now we apply the restricted Yoneda functors
\[
\YYon'\colon\Hc(\IndC(\SSS))\la\MMod\cs\qquad\YYon''\colon\Hc(\IndC(\UU\op))\la\MMod\fs(\cs)\op,
\]
as in the previous proof,
and we find the commutative diagram
\[\xymatrix{
&\fl'(\cs)\op\ar@{=}[r]\ar[d]_{(\YYon')\op} & \fl'_1\big(\fs(\cs)\op\big)\ar[r]^-\sim &
\fl'_2\big(\fs(\cs)\op\big)\ar[d]^{\YYon''}&\\
\cs\op\ar@{^{(}->}[r] &
\fl(\cs)\op \ar[rr]^-{\wh\Yon} & & \fl\big(\fs(\cs)\op\big)&
\fs\big(\fs(\cs)\op\big)\ar@{_{(}->}[l]
}\]
where the equality is by \autoref{L4.3} (ii), the
triangle equivalence
in the top row is the triangle equivalence in \autoref{L1.5},
and the equivalence $\wh\Yon\colon\fl(\cs)\op\la \fl\big(\fs(\cs)\op\big)$
is by \cite[Proposition~\ref*{P3.94949}]{Neeman25}. 
But over $\cs\op\subset\fl(\cs)\op$ (respectively, over the subcategory
$\fs\big(\fs(\cs)\op\big)\subset\fl\big(\fs(\cs)\op\big)$) the functor $(\YYon')\op$ (respectively, $\YYon''$) restricts
to a triangle equivalence
by \cite[Theorem~\ref*{T20.17}]{Neeman18A}. This means that,
if we restrict further from $\fl'(\cs)\op\iso\fl'_2\big(\fs(\cs)\op\big)$
to the subcategories $\cs\op\iso\fs\big(\fs(\cs)\op\big)$, we obtain
an isomorphism
\[
\SSS\op=\left(\Phi^?_{\Hc(\IndC(\SSS)),\cs}\big(\IndC(\SSS)\big)\right)\op\iso\Phi^?_{\Hc\big(\IndC(\UU\op)\big),\fs\big(\fs(\cs)\op\big)}\big(\IndC(\UU\op)\big)=\Psi\big(\Psi(\SSS)\op\big)
\]
in $\HoStabInfA$.
\eprf

\rmk{actualmapagainagainagain}
As in \autoref{actualmap}, in \autoref{actualmapagain}
and in \autoref{actualmapagainagain}, we note that
the proof shows a little more than the statement of 
\autoref{P4.5} asserts. The assertion is
the existence of an isomorphism
\[
\SSS\la\Psi\big(\Psi(\SSS)\op\big)\op
\]
and in \autoref{subsect:autoequiv}
it will be important to note that this
isomorphism is natural in $\SSS$.
Let us therefore go carefully through
the way this isomorphism was constructed.

We began with the (unrestricted) Yoneda functor
$\YonInf\colon\SSS\la\IndC(\SSS)$, which is
unmistakeably natural. Then we used the
(characteristic) metric to construct
the subcategory $\fl'(\cs)\subset\Hc(\IndC(\SSS))$,
and inside it
$\fs(\cs)\subset\fl'(\cs)\subset\Hc(\IndC(\SSS))$.
The full $\infty$-subcategory $\Psi(\SSS)\subset\IndC(\SSS)$
has the same objects $\fs(\cs)\subset\Hc(\IndC(\SSS))$. Because the metric is characteristic, the subcategory
$\Psi(\SSS)\subset\IndC(\SSS)$ is stable
under quasi-equivalences.

And now comes the hairy part: we have
the full subcategory
$\Psi(\SSS)\op\subset\IndC(\SSS)\op$, and hence an
induced (restricted) Yoneda functor $\YYonInf\colon\IndC(\SSS)\op\la\IndC(\Psi(\SSS)\op)$.
This allows us to form the composite
\[\xymatrix@C+0pt{
\SSS\op
\ar[r]^-{\YYon\op}
&
\IndC(\SSS)\op
\ar[r]^-{\YYon'}
&
\IndC(\Psi(\SSS)\op)\ ,
}\]
which is clearly natural with respect to
quasi-equivalences.
And what is actually proved in
\autoref{P4.5}
is first that this composite is
fully faithful, and then that the
essential image is precisely the
subcategory
$\Psi(\Psi(\SSS)\op)\subset\IndC(\Psi(\SSS)\op)$.
\ermk

\cor{C4.7}
Let $\cs$ be a triangulated category with an excellent
metric $\{\cm_i\}_{i\in\nn}$ such that
$\cs$ and its subcategories $\cm_i$ are closed
in $\fl(\cs)$ under direct summands. Assuming further that the metrics $\{\cm_i\}_{i\in\nn}$
on $\cs$ and
$\{\cn_i\}_{i\in\nn}$ of the form \eqref{eq:metrN} on $\fs(\cs)$ are both characteristic,
then the map $\Psi$ gives a bijection
\[
\Enhq(\cs)\longleftrightarrow\Enhq\big(\fs(\cs)\big)
\]
\ecor

\prf
Because the metric $\{\cm_i\}$
on $\cs$
is characteristic, \autoref{R1.25}
says that  
$\Psi$ respects isomorphisms in $\HoStabInfA$ and
induces a map
\[
\Psi\colon\Enhq(\cs)\la\Enhq\big(\fs(\cs)\big)\ .
\]
Since the metric
$\{\cn_i\}$ on $\fs(\cs)$
is charecteristic,
so is the metric
$\{\cn\op_i\}$ on $\fs(\cs)\op$,
and \autoref{R1.25} applies again
to give a well-defined map
\[
\Psi\colon\Enhq\big(\fs(\cs)\op\big)\la \Enhq(\cs\op).
\]
Then \autoref{P4.5} shows that $\Psi\comp\Psi=\id$ and
therefore $\Psi$ is a bijection, with its inverse
(up to passing to opposite categories)
being $\Psi$.
\eprf

\subsection{The weakly approximable case}\label{subsec:waen}

We want to specialize to the case where $\ct$ is a weakly approximable triangulated category. We start with the following easy observation.

\lem{lem:restTbcTc}
Let $\ct$ be a weakly approximable triangulated category,
and assume further that $\ct^c\subset\ct^b_c$.
Endow $\ct^c$ with the
good metric of \autoref{ex:metricTsb}.
Then $\fs(\ct^c)=\ct^b_c$, and the map
$\Psi:\Enhq(\ct^c)\la\Enhq(\ct^b_c)$
of \autoref{subsec:constr} is surjective.

If we further assume that 
${}^\perp(\ct^b_c)\cap\ct^-_c=\{0\}$ then
this map is a bijection. More precisely, we have
the pair of 
inverse bijections
\[\xymatrix@C-15pt{
\Phi^?_{\ct^b_c,\ct^c}\colon
\Enhq(\ct^b_c)
\ar@<0.3ex>[rr]
&&
\Enhq(\ct^c)\colon \Psi
\ar@<0.3ex>[ll]
}\]
\elem

\prf
The good metric of
\autoref{ex:metricTsb}
is characteristic by
\cite[Remark~\ref*{R22.105.5} and
  Proposition~\ref*{P22.109}]{Neeman18A},
and by \cite[Example~\ref*{E22.3}]{Neeman18A}
we have that $\fs(\ct^c)=\ct^b_c$.
The inclusion $\ct^c\subset\fs(\ct^c)$
holds in the good extension $\ct$, and hence
it also holds in $\MMod\cs$,
see \cite[Remark~\ref*{R29876}]{Neeman18A}.
Moreover, if we assume the extra hypothesis
that
${}^\perp(\ct^b_c)\cap\ct^-_c=\{0\}$
then  
the inclusion $\ct^c\subset\ct^b_c$ is characteristic by \autoref{thm:passage}. The result
now follows from \autoref{Corofeasyanalog}.
\eprf

The following is another easy application of the general results in the previous section.

\lem{lem:unienTbcTc}
Let $\ct$ be a weakly approximable triangulated category. 
Then there is a bijection between the sets $\Enhq(\ct^b)$ and $\Enhq(\tsb)$. Furthermore, if $\ct$ is coherent, then we have bijections of sets
\[
\Enhq(\ct^c)\longleftrightarrow\Enhq(\ct^b_c)\qquad\EnhStrA(\ct^c)\longleftrightarrow\EnhStrA(\ct^b_c).
\]
\elem

\prf
Under the hypotheses of
the Lemma, the 
categories $\ct^c$ and $\tsb$
have natural excellent metrics,
see \autoref{ex:excelmetriccompact}.
With respect to these metrics we have
$\fs(\ct^c)=\ct^b_c$ and 
$\fs(\tsb)=\ct^b$ by \autoref{ex:concrsigma}.
And the metrics in question are all
characteristic: 
the metric on
$\tsb$ by \autoref{lem:charmetric}, 
the induced metric on $\fs(\tsb)=\ct^b$
by
\cite[Proposition~6.1.8]{Canonaco-Neeman-Stellari24},
and the metrics
on $\ct^c$ and on $\fs(\ct^c)=\ct^b_c$ by
\autoref{ex:charmetricTc}.
The assertions about $\Enhq$ now follow as
an application of \autoref{C4.7}.

In order to deal with the case of strong uniqueness of enhancements, it is enough to observe that, with our assumptions, the passage from $\ct^c$ to $\fs(\ct^c)\iso\ct^b_c$ (and the analogous passage for the corresponding enhancements) is intrinsic in the sense that it induces an isomorphism of the autoequivalence groups of $\ct^c$ and $\ct^b_c$ (see Remark 4.7, Proposition 4.8 and Proposition 6.5 in \cite{Neeman18A}).
\eprf

By putting together the discussion in the previous section, we can prove the following.

\thm{prop:applTwa}
Let $\ct$ be a weakly approximable triangulated category satisfying one of the following two assumptions:
\begin{enumerate}
\item[{\rm (a)}] either $\ct$ is coherent;
\item[{\rm (b)}] or $\ct^c\subset\ct^b_c$.
\end{enumerate}
Let $\ca\subset\cb\subset\ct$ be two of the natural subcategories in the diagram \eqref{eq:incl}, such that
$\cb\neq\ct_c^-$ and, in the case where {\rm (a)} holds but {\rm (b)} does not,
the categories $\ca,\cb\in\{\ct^{c,b},\tsbb\}$ are also excluded. Assume further that
\begin{enumerate}
\item[{\rm (i)}] $\cb$ (and thus $\ca$) has an enhancement in $\HoStabInfA$.
\item[{\rm (ii)}] $\ca$ has a unique enhancement in $\HoStabInfA$.
\item[{\rm (iii)}]
In the case where $\ca=\ct^b_c$ and
{\rm (b)} holds, we
add the assumption that 
${}^\perp(\ct^b_c)\cap\ct^-_c=\{0\}$.
\end{enumerate}
Then $\cb$ has a unique enhancement in $\HoStabInfA$.

Moreover, there is a formula
for $\cb$ and its (unique) enhancement in terms of $\ca$ and its (unique) enhancement. In particular, given a triangulated category $\ca$ with a unique enhancement, there exists at most one unique $\cb$, containing $\ca$ and possessing an enhancement, for which there might exist a weakly approximable triangulated category $\ct$, with $\ca\subset\cb\subset\ct$ being a prescribed pair of the natural subcategories in the diagram \eqref{eq:incl},
and in the case where $\ca=\ct^b_c$
the ambient $\ct$ is assumed to satisfy {\rm(iii)}.
\ethm

\prf
First of all, if in the diagram \eqref{eq:incl}
we have a triple of subcategories $\ca\subset\cb\subset\cc$,
none of which is in the list of exceptions, then
it is enough to prove the assertions  for the
pairs $(\ca,\cb)$ and $(\cb,\cc)$, with the pair
$(\ca,\cc)$ following by transitivity.

The cases where $(\ca,\cb)\in\{(\ct^-,\ct),(\ct^b,\ct^+),(\ct^+,\ct),(\ct^b,\ct^-)\}$ follow directly from \autoref{E2.5}. 
The case $(\ca,\cb)=(\tsb,\ct^-)$ 
is obtained by \autoref{cor:applwaTsb}.

The case $(\ca,\cb)=(\ct^c,\tsb)$ is handled in \autoref{lem:uniqenhTsb}.

Under assumption (b),
the case $(\ca,\cb)=(\tsb,\ct^b)$
is an example of an inclusion $\tsb\subset\fs(\tsb)=\ct^b$, and we can invoke \autoref{Corofeasyanalog}.

Again under the assumption (b), the case
$(\ca,\cb)=(\ct^c,\ct^b_c)$ follows from \autoref{lem:restTbcTc}.

To treat the pair $(\ca,\cb)=(\ct^b_c,\ct^b)$ we appeal to the sequence of inclusions and bijections
\[
\EnhA(\ct^b)\longleftrightarrow\EnhA(\tsb)\longhookrightarrow\EnhA(\ct^c)\longleftrightarrow\EnhA(\ct^b_c),
\]
The middle injection comes from \autoref{lem:uniqenhTsb}.
Under assumption
(a) the first and the last bijections come from
\autoref{lem:unienTbcTc}. Under assumption (b),
only the first bijection follows
from \autoref{lem:unienTbcTc}, but because
we are assuming (iii), the second bijection
is by \autoref{lem:restTbcTc}.

It remains to consider the case $(\ca,\cb)=(\ct^-_c,\ct^-)$,
but first let us discuss the ``Moreover''assertion,
for all the other cases.

Here the point is simple: in each of the
pairs $(\ca,\cb)$ treated above, there
is an explicit  recipe constructing,
up to equivalence, the (unique)
enhancement $\BB$ of $\cb$ out of
the unique enhancement $\AAA$ of
$\ca$. If the relation between $\ca$ and
$\cb$ is that $\cb=\fs(\ca)$ then
$\BB=\Psi(\AAA)$, with $\Psi$
as in \autoref{theconstructionpsi}. For the
pairs $(\ca,\cb)\in\{(\ct^-,\ct),(\ct^b,\ct^+),(\ct^+,\ct),(\ct^b,\ct^-),(\tsb,\ct^b)\}$,
then what works is either
$\BB=\LL(\AAA)$
or $\BB\op=\LL(\AAA\op)$,
with $\LL$ as in 
\autoref{theconstructionL}. In the case of
the pair $(\ca,\cb)=(\ct^c,\tsb)$ 
of \autoref{lem:uniqenhTsb}, the formula
is that
$\BB=\Delta(\AAA)$,
with $\Delta$ as in
\autoref{constrtenhTsb}.

The only case not covered by the above is
the pair $(\ca,\cb)=(\ct^b_c,\ct^b)$. Under
assumption (a), the recipe
sketched in the pre-``moreover'' part of the proof
takes the
enhancement $\AAA$ of $\ct^b_c$
to the object $\Psi\comp\Delta\comp(\Psi(\AAA\op)\op)$ in the category $\HoStabInfA$.
To spell this out: for the
excellent metric on $(\ct^b_c)\op$ we have that
$\Psi(\AAA\op)$ is the unique enhancement
of $(\ct^c)\op=\fs((\ct^b_c)\op)$,
then we apply $\Delta$ to
the unique enhancement 
$(\Psi(\AAA\op)\op)$ of $\ct^c$
to obtain the unique enhancement
of $\tsb$, and finally $\Psi$ to obtain the
unique enhancement for $\ct^b$.

Still with $(\ca,\cb)=(\ct^b_c,\ct^b)$
but under the assumption (b), we must be in
the situation
of (iii). Making explicit the recipe we sketched
in the pre-``moreover'' part of the proof,
the unique enhancement $\AAA$ of $\ct^b_c$ goes 
to the unique enhancement $\Psi\comp\Delta\comp\Phi^?_{\ct^b_c,\ct^c}(\AAA)$ of the category $\ct^b$.

It remains to treat the case $(\ca,\cb)=(\ct^-_c,\ct^-)$.
Suppose $\AAA$ is the unique
enhancement of $\ct^-_c$ and $\BB$ is some
enhancement of $\ct^-$. By \autoref{thm:passage}
$\ct^-_c$ is a characteristic subcategory of $\ct^-$,
and the map $\Phi^?_{\ct^-,\ct^-_c}\colon\Enhq(\ct^-)\la\Enhq(\ct^-_c)$ is
therefore well-defined.
Hence $\Phi^?_{\ct^-,\ct^-_c}(\BB)$ is an enhancement of
$\ct^-_c$, and by the uniqueness of the enhancement of $\ct^-_c$ 
we must have an isomorphism $\AAA\iso\Phi^?_{\ct^-,\ct^-_c}(\BB)$.
But \autoref{thm:passage} also tells us that
$\ct^c\subset\ct^-_c$ is a characteristic
subcategory and therefore
$\Phi^?_{\ct^-_c,\ct^c}\colon\Enhq(\ct^-_c)\la\Enhq(\ct^c)$
is well-defined. Applying this map to the
isomorphism above, we obtain the
isomorphism in the sequence
\[
\Phi^?_{\ct^-_c,\ct^c}(\AAA)\iso
\Phi^?_{\ct^-_c,\ct^c}\left(\Phi^?_{\ct^-,\ct^-_c}(\BB)\right)
=
\Phi^?_{\ct^-,\ct^c}(\BB)\ ,
\]
with the equality being obvious.
But the restriction maps
\[
\xymatrix@C+15pt{\Enhq(\ct^-)
\ar[r]^-{\Phi^?_{\ct^-,\tsb}}
&
\Enhq(\tsb)
\ar[r]^-{\Phi^?_{\tsb,\ct^c}}
&
\Enhq(\ct^c)
}
\]
are both monomorphisms: the first by
\autoref{cor:applwaTsb} and the second by
\autoref{lem:uniqenhTsb}.
The map $\Phi^?_{\cb,\ct^c}$, being the composite of these monomorphisms, is also a monomorphism.
Hence there exists at most
one $\BB$ with
$\Phi^?_{\ct^-,\ct^c}(\BB)\iso\Phi^?_{\ct^-_c,\ct^c}(\AAA)$, proving the uniqueness of $\BB\in\Enhq(\cb)$.

For the ``Moreover'' assertion
we remember that, for both the 
restriction maps $\Phi^?_{\ct^-,\tsb}$ and $\Phi^?_{\tsb,\ct^c}$,
there is an explicit reconstruction procedure.
One procedure to obtain the unique possible $\BB$
goes as follows. We start with the
enhancement $\Phi^?_{\ct^-_c,\ct^c}(\AAA)$ of
$\ct^c$, apply $\Delta$ to obtain an
enhancement $\SSS$ of $\tsb$, and then apply $\LL$ to
form an enhancement of $\TT$ of $\ct^-$.
\eprf

The following is now straightforward.

\cor{cor:finTwa}
Let $\ct$ be a weakly approximable triangulated category having an enhancement and satisfying one of the following two assumptions:
\begin{enumerate}
\item[{\rm (a)}] either $\ct$ is coherent;
\item[{\rm (b)}] or $\ct^c\subset\ct^b_c$.
\end{enumerate}
If $\ct^c$ has a unique enhancement in $\HoStabInfA$, then any of the natural subcategories $\ca$ in \eqref{eq:incl}, such that $\ca\not\in\{\ct^-_c,\ct^{c,b},\tsbb\}$ when {\rm (a)} holds and $\ca\ne\ct^-_c$ when {\rm (b)} holds, has a unique enhancement in $\HoStabInfA$.
\ecor

The above results suggest the following natural question.

\begin{qn}\label{qn:1}
Let $\ct$ be a weakly approximable triangulated category. Assume that $\ct$ has an enhancement in $\StabInfA$, and that $\ct^c$ has a unique enhancement. Does $\ct^-_c$ have a unique enhancement in $\HoStabInfA$?
\end{qn}

We conclude this section with a result that is also
relevant to strong uniqueness of enhancements.

\lem{lem:struniqgen}
Let $\ct$ be a weakly approximable and coherent triangulated category. 
Then $\ct^c$ has a (strongly) unique enhancement if and only if $\ct^b_c$ does.
\elem

\prf
This follows from \autoref{lem:unienTbcTc}.
\eprf

\subsection{Extending equivalences}\label{subsec:lifting}

An important consequence of \autoref{thm:passage} is that if $\ct$ and $\ct'$ are two weakly approximable triangulated categories and we have a pair of matching subcategories $\cb\subset\ct$ and $\cb'\subset\ct'$ in the diagram \eqref{eq:incl}, any equivalence $\cb\iso\cb'$ descends to all the matching subcategories $\ca\subset\cb$ and $\ca'\subset\cb'$ in the diagram \eqref{eq:incl}. The aim of this section is to go upward and extend the existence of equivalences.

\thm{prop:applextTwa}
Let $\ct_1$ and $\ct_2$ be a pair of
weakly approximable triangulated categories, satisfying one of the following two assumptions for $i=1,2$:
\begin{enumerate}
\item[{\rm (a)}] either $\ct_i$ is coherent,
\item[{\rm (b)}] or $\ct_i^c\subset(\ct_i)^b_c$.
\end{enumerate}
For $i=1,2$, let $\ca_i\subset\cb_i\subset\ct_i$ be matching inclusions of subcategories in the diagram \eqref{eq:incl}, 
such that $\cb_i\ne(\ct_i)^-_c$, and where 
the categories $\ca_i,\cb_i\in\{\ct_i^{c,b},\tsbb_i\}$ are also excluded. Assume further that
\begin{enumerate}
\item[\rm (i)] $\cb_i$ (and thus $\ca_i$) has an enhancement in $\HoStabInfA$, for $i=1,2$.
\item[\rm (ii)] $\ca_i$ has a unique enhancement
in $\HoStabInfA$, for $i=1,2$.
\item[{\rm (iii)}]
In the case where $\ca_i=(\ct_i)^b_c$,
we assume that either   
{\rm (a)} holds for both
$\ct_1$ and $\ct_2$, or
else that {\rm (b)} holds for both.
And when both $\ct_1$ and $\ct_2$
satisfy {\rm (b)} we furthermore
suppose that
${}^\perp(\ct_i)^b_c\cap(\ct_i)^-_c=\{0\}$,
for $i=1,2$.
\end{enumerate}
If $\ca_1\iso\ca_2$, then $\cb_1\iso\cb_2$.
\ethm

\prf
The Theorem is an immediate consequence
of the ``Moreover'' part of
\autoref{prop:applTwa}. After all, the unique
enhancement of $\ca_1\iso\ca_2$ permits us
to explicitly construct, up to
equivalence, the unique enhancement of
both $\cb_1$ and $\cb_2$. Hence $\cb_1\iso\cb_2$.

The only point worth mentioning is that
the recipe, which passes from
the unique enhancement of $\ca_i$ to
the unique enhancement of $\cb_i$,
is mostly the same for case {\rm (a)}
and for case {\rm (b)}. The only difference
occurs when $\ca_i=(\ct_i)^b_c$,
and therefore in this case we have to
impose condition (iii).
\eprf

Note that the result does not prove that the equivalence $\cb_1\iso\cb_2$ restricts to the given equivalence $\ca_1\iso\ca_2$. It would be interesting to investigate when this happens.

An immediate application of \autoref{prop:applextTwa} is the following generalization of \cite[Theorem C]{Canonaco-Neeman-Stellari24}:

\cor{cor:applextTwa}
Let $\ct_1$ and $\ct_2$ be two enhanceable, weakly approximable triangulated categories,
with each one satisfying one of the following two assumptions:
\begin{enumerate}
\item[{\rm (a)}] either $\ct_i$ is coherent,
\item[{\rm (b)}] or $\ct_i^c\subset(\ct_i)^b_c$.
\end{enumerate}
Let $\ca_1\subset\ct_1$ and $\ca_2\subset\ct_2$ be matching inclusions of subcategories in the diagram \eqref{eq:incl}, with
$\ca_i\notin\{(\ct_i)^-_c,\ct_i^{b,c},\tsbb_i\}$.
Assume further that $\ca_1\iso\ca_2$,
and this triangulated category has
a unique enhancement.
Furthermore:
\begin{enumerate}
\item[{\rm (i)}]
In the case where $\ca_i=(\ct_i)^b_c$,
then either {\rm (a)} holds for both
$\ct_1$ and $\ct_2$ or
{\rm (b)} holds for both.
And if {\rm (b)} holds for both,
then we also suppose that 
${}^\perp(\ct_i)^b_c\cap(\ct_i)^-_c=\{0\}$,
for $i=1,2$
\end{enumerate}
Then the entire tower of subcategories
of the diagram \eqref{eq:incl}
matches up, it can be identified (up to triangle equivalences) 
for $\ct_1$ and for $\ct_2$.
\ecor

\prf
We are given first that $\ca_1\iso\ca_2$, second
that this category has a unique enhancement, and
third
that $\ct_1$ and $\ct_2$  both have enhancements.
We simply apply \autoref{prop:applextTwa} to
the matching pair $(\ca_1,\ct_1)$ and $(\ca_2,\ct_2)$
to
deduce that $\ct_1\iso\ct_2$.
Hence the whole tower of
intrinsic subcategories matches up in pairs.
\eprf

Following the same stream of ideas that led us to formulate \autoref{qn:1}, we can ask whether \autoref{prop:applextTwa} may be extended to work for $\ct^-_c$ as well.

\section{Applications}\label{sect:applications}

In this section we discuss several applications of geometric/topological nature of the results in the previous section. In particular, in \autoref{subsect:uegeo} we apply \autoref{thm:main2} to geometric categories and (as a special case) we give a positive answer to the open question by Antieau mentioned in the introduction. \autoref{subsect:extspectra} concerns applications of \autoref{thm:main3} and provides two main results: a complete generalization of a result by Rickard (see \autoref{cor:D}) and a generalization of the main result in \cite{Schwede07} (see \autoref{margolis2}). Finally, \autoref{subsect:autoequiv} deals with the comparison of autoequivalence groups.

\subsection{(Strong) uniqueness in geometric contexts}\label{subsect:uegeo}

In the geometric setting, we can easily apply the results in \autoref{subsec:waen}, thus improving the existing previous results.

\pro{cor:geomappli}
Let $X$ be a quasi-compact and quasi-separated scheme and let $Z\subset X$ be a closed subset such that $X\setminus Z$ is quasi-compact.
\be
\item
If $X=Z$, then all the natural triangulated subcategories of $\Dqc(X)$ in \autoref{ex:waexample}, except $\Dqcp(X)$, have unique enhancements in $\HoStabInfA$.
\item
If $X$ is noetherian, then all the natural triangulated subcategories of $\Dqcs Z(X)$ in \autoref{ex:waexample} have unique enhancements in $\HoStabInfA$.
\ee
\epro

\prf
First we treat the case $X=Z$.
The  weakly approximable
triangulated
category $\ct=\Dqc(X)$ obviously has an enhancement,
while the subcategory $\ct^c=\dperf X$ has
a unique enhancement by \autoref{thm:uniqenhangeom} (ii).
Also, by \cite[Proposition~10.1.3]{Canonaco-Neeman-Stellari24},
we have that $^\perp(\ct^b_c)\cap\ct^-_c=\{0\}$.
\autoref{cor:finTwa} therefore applies to assert
that the other subcategories except $\Dqcp(X)$ also
have unique enhancements.

If $X$ is noetherian then $\ct=\Dqcs Z(X)$ is a coherent
weakly approximable triangulated category having
an enhancement. By \autoref{cor:uniqensupp} $\ct^b_c=\Dqcpbs Z(X)$
has a unique enhancement, and from \autoref{lem:unienTbcTc}
we learn that so does $\ct^c=\dperfs ZX$. Applying
\autoref{cor:finTwa} proves that all the natural
triangulated subcategories other than $\D^-_{\coh,Z}(X)(=\Dqcps Z(X))$
have unique enhancements in $\HoStabInfA$. This leaves
us with the case of $\D^-_{\coh,Z}(X)$ which, under our assumptions,
is equivalent to $\D^-(\coh_Z(X))$. The uniqueness of
enhancement for this category comes from \autoref{thm:uniqenhangeom} (i).
\eprf

\rmk{rmk:uniqueaffine}
If $R$ is a ring a similar proof, using \autoref{thm:uniqenhring}, provides the uniqueness of enhancements for all the full triangulated subcategories of $\D(R)$ in \autoref{ex:waexample}, other than $K^-(\proj{R})$.
\ermk

The machinery can also be applied to the homotopy category of spectra, yielding the following
improvement on \autoref{thm:uniqenhspectra}. 

\pro{cor:spectrappli}
All the natural triangulated subcategories of $\Ho{\Spe}$ in \eqref{eq:incl}, apart from $\Ho{\Spe}^-_c$ and ${\Ho{\Spe}}\SBb$, have unique enhancements in $\HoStabInfNL$.
\epro

\prf
As explained in \autoref{ex:coherent}, the triangulated category $\ct=\Ho{\Spe}$ is a coherent, weakly approximable triangulated category.
Clearly $\ct$ has an enhancement, while
the subcategory
$\ct^c=\Ho{\Spe}^{c}$ has a unique
enhancement by \autoref{thm:uniqenhspectra}.
Hence \autoref{cor:finTwa} applies and tells us  
that all the subcategories 
except $\Ho{\Spe}^{c,b},\Ho{\Spe}^-_c,{\Ho{\Spe}}\SBb$
have unique enhancements.
This leaves us with showing that
$\Ho{\Spe}^{c,b}$ has a unique enhancement, but this is
obvious since $\Ho{\Spe}^{c,b}=\{0\}$.
\eprf

For strong uniqueness of enhancements we present two results.

\cor{cor:struniqexcomp}
If $X$ is a scheme proper over a field $\kK$, such that $X$ has depth $\geq 1$, then both $\D^b(\coh(X))$ and $\dperf{X}$ have strongly unique enhancements in $\HoStabInf$.
\ecor

\prf
The triangulated category $\ct=\Dqc(X)$
is weakly approximable and coherent, and the
hypotheses on $X$ are such that
\autoref{prop:struniqex}
informs us that the subcategory
$\ct^b_c=\dbcoh(X)\cong\D^b(\coh(X))$
has a strongly unique enhancement.
By \autoref{lem:struniqgen} it automatically
follows that  $\ct^c=\dperf{X}$
also has a strongly unique enhancement.
\eprf

\cor{cor:struniqexcompsupp}
Let $X$ be a scheme quasi-projective over a field $\kK$, containing a projective subscheme $Z\subset X$ such
that $Z$ has depth $\ge1$, and also satisfying
$\OO_{iZ}\in\dperf{X}$ for all $i> 0$. Then
both $\dperfs Z{X}$ and $\D^b_{\coh,Z}(X)$ have strongly unique enhancements in $\HoStabInf$.
\ecor

\prf
The triangulated category $\ct=\Dqcs Z(X)$
is weakly approximable and coherent, and the
hypotheses on $Z\subset X$ are such that
\autoref{thm:suppCS}
informs us that the subcategory
$\ct^c=\dperfs ZX$
has a strongly unique enhancement.
By \autoref{lem:struniqgen} it again
automatically
follows that  $\ct^b_c=\D^b_{\coh,Z}(X)$
also has a strongly unique enhancement.
\eprf

\subsection{Extending equivalences and spectra}\label{subsect:extspectra}

The general results in \autoref{subsec:lifting} have important applications in the geometric setting. In particular, \autoref{cor:applextTwa} clearly applies to the case $\ct_i=\Dqc(X_i)$, where $X_i$ is a quasi-compact and quasi-separated scheme. In this way we recover Corollary D in \cite{Canonaco-Neeman-Stellari24}, which we repeat here both for the convenience of the reader and because we can slightly improve it by adding item (8); it treats the category $\tsb$ studied in \autoref{subsec:functorTsb}, which in the special case of $\ct=\Dqcs Z(X)$ turns out to be the $\Dloc_Z(X)$ of \autoref{subsect:geomcase}.

\cor{cor:D}
Let $X$ and $X'$ be quasi-compact and quasi-separated schemes with closed subschemes $Z\subset X$ and $Z'\subset X'$ such that $X\setminus Z$ and $X'\setminus Z'$ are quasi-compact.
Assume further that one of the two conditions below
holds:
\be
\item
Either  $Z=X$ or $Z'=X'$.
\item
One of $X$ or $X'$ is noetherian.
\ee
Then the following are equivalent:
\begin{enumerate}[{\rm (1)}]
\item There exists a triangle equivalence $\Dqcs Z(X)\iso\Dqcs{Z'}(X')$.
\item There exists a triangle equivalence $\Dqcpls Z(X)\iso\Dqcpls{Z'}(X')$.
\item There exists a triangle equivalence $\Dqcmis Z(X)\iso\Dqcmis{Z'}(X')$.
\item There exists a triangle equivalence $\Dqcbs Z(X)\iso\Dqcbs{Z'}(X')$.
\item There exists a triangle equivalence $\Dqcps Z(X)\iso\Dqcps{Z'}(X')$.
\item There exists a triangle equivalence $\Dqcpbs Z(X)\iso\Dqcpbs{Z'}(X')$.
\item There exists a triangle equivalence $\dperfs ZX\iso\dperfs{Z'}{X'}$.
\item There exists a triangle equivalence $\Dloc_Z(X)\iso\Dloc_{Z'}(X')$.
\end{enumerate}
\ecor

\prf
First of all:
for any pair $Z\subset X$ the category
$\ct=\Dqcs Z(X)$ 
is  weakly approximable, it
satisfies $\ct^c\subset\ct^b_c$, and
by \cite[Proposition~10.1.3]{Canonaco-Neeman-Stellari24}
we have that $^\perp(\ct^b_c)\cap\ct^-_c=\{0\}$.
By
\autoref{thm:passage},
applied to the
pair $\ct=\Dqcs Z(X)$ and
$\ct'=\Dqcs{Z'}(X')$,
the equivalence of any pair in the
tower implies the equivalence of the
(smallest) intrinsic subcategories
$\dperfs ZX\iso\dperfs{Z'}{X'}$.
By \autoref{cor:geomappli}
this category has a unique enhancement
under any of the four hypotheses.
\autoref{cor:applextTwa} therefore
tells us that all the categories
in the tower match up.
\eprf

Moving on to the topological example of $\Ho{\Spe}$ we recall that, traditionally, the
interest in
results like \autoref{thm:main3} has come from the following conjecture of \cite[Chapter 2, Section 1]{Margolis}:

\begin{conjecture}[Margolis Uniqueness Conjecture]\label{conj:margolis}
Let $\ct$ be a compactly generated triangulated category with small coproducts. If there is an exact equivalence $\Ho{\Spe}^c\iso\ct^c$, then there is an exact equivalence $\Ho{\Spe}\iso\ct$.
\end{conjecture}

This conjecture is still wide open, but a variant was proved in the beautiful paper \cite{Schwede07}. In the special case where $\ct$ admits an enhancement the conjecture is true; the proof 
(as in the original \cite{Schwede07}) is by a combination of \autoref{thm:uniqenhspectra} and \autoref{lem:restTTc}.

Let $\ct$ be some compactly generated $\ct$ with $\ct^c\iso\Ho\Spe^c$.
By \autoref{prop:ax2} $\ct$ must be weakly approximable and by
\autoref{excellencebycompacts} it must be coherent,
both properties depend only on the category
$\ct^c$. We may therefore formulate a more general version of \autoref{conj:margolis}. Let $\ct$ be a weakly approximable triangulated category and consider the following two sets of subcategories:
\begin{align*}
\cc_\Spe:=&\left\{\Ho{\Spe},\Ho{\Spe}^b,\Ho{\Spe}^-,\Ho{\Spe}^+,\Ho{\Spe}\SB,\Ho{\Spe}^-_c,\Ho{\Spe}^b_c,\Ho{\Spe}^c\right\},\\
\cc_\ct:=&\left\{\ct,\ct^b,\ct^-,\ct^+,\tsb,\ct^-_c,\ct^b_c,\ct^c\right\}.
\end{align*}
Then the generalized conjecture states:

\begin{conjecture}[Extended Uniqueness Conjecture]\label{conj:margolis2}
Let $\ct$ be a coherent, weakly approximable triangulated category and let $\ca_1\subset\cb_1$ in $\cc_\Spe$ and $\ca_2\subset\cb_2$ in $\cc_\ct$ be matching subcategories. If there is an exact equivalence $\ca_1\iso\ca_2$, then there is an exact equivalence $\cb_1\iso\cb_2$.
\end{conjecture}

Using \autoref{cor:finTwa} we prove the following special but interesting case of \autoref{conj:margolis2}.

\pro{margolis2}
\autoref{conj:margolis2} holds true if $\cb_2\neq\ct^-_c$, and $\cb_2$  has an enhancement.
\epro

\prf
By \autoref{thm:uniqenhspectra} and \autoref{cor:finTwa} all the subcategories in $\cc_\Spe$ except $\Ho{\Spe}^-_c$ have unique enhancements in $\HoStabInfNL$. Unless $\ca_1=\ct^-_c$  the result follows from \autoref{prop:applextTwa}, with $\ct$ and $\Ho{\Spe}$ both satisfying assumption (a).

It remains to deal with the case where $\ca_1=\Ho\Spe^-_c$ and $\ca_2=\ct^-_c$. We are assuming that $\Ho\Spe^-_c\iso\ct^-_c$, and \autoref{thm:main1} guarantees that
$\Ho\Spe^c\iso\ct^c$. But these equivalent categories
have a unique enhancement, and
\autoref{prop:applextTwa}
applies again to guarantee that $\cb_1\iso\cb_2$.
\eprf

This generalizes the result in \cite{Schwede07}, which is the case where $\ca_1=\Ho{\Spe}^c$, $\cb_1=\Ho{\Spe}$, $\ca_2=\ct^c$ and $\cb_2=\ct$.

\subsection{Autoequivalence groups}\label{subsect:autoequiv}

Over recent decades there has been much interest
in the study of autoequivalence groups
of triangulated categories and of their
enhancements. It is therefore worth noting
the following.

\rmk{Rmkautequivgrps}
The following constructions induce natural
group homomorphisms:
\be
\item
Given a triangulated category 
$\ct$ and a characteristic triangulated
subcategory $\cs\subset\ct$, then
restriction gives a group homomorphism
$\Res\colon\Aut(\ct)\la\Aut(\cs)$.
\item
Given a triangulated category 
$\ct$ with a
characteristic good metric $\{\cm_i\}$,
then the construction
$\fs$ yields a natural group homomorphism
$\Aut(\ct)\la\Aut(\fs(\ct))$.
\setcounter{enumiv}{\value{enumi}}
\ee
In the world of $\infty$-categories,
there are various notions
of automorphism groups. The one we will work with
is the following:
if $\TT$ is a stable
$\infty$-category, $\AutL(\TT)$ will denote
the automorphism group of the object $\TT$
in the category $\HoStabInfA$.
In the special case of derived categories
of (good enough) schemes, this agrees with the 
group of Fourier-Mukai transforms.
Suppose next that we are given a 
triangulated category 
$\ct$ together with an enhancement
$\TT\in\Enhq(\ct)$.
\be
\setcounter{enumi}{\value{enumiv}}
\item
If $\cs\subset\ct$ is a
characteristic triangulated
subcategory,
then
restriction gives a group homomorphism
$\Res\colon\AutL(\TT)\la\AutL(\Phi^?_{\ct,\cs}(\TT))$.
\item
If the triangulated category 
$\ct$ has a classical generator $G$,
then with $\Delta(\TT)$ as
in \autoref{constrtenhTsb} there is
an obvious group homomorphism
$\wt\Delta\colon\AutL(\TT)\la\AutL(\Delta(\TT))$.
\item
If the triangulated category 
$\ct$ has a characteristic very good metric $\cm_i$,
then we have two natural maps
\[
\wt\LL:\AutL(\TT)\la\AutL(\LL(\TT))
\quad
\text{and}
\quad
\wt\Psi\colon\AutL(\TT)\la\AutL(\Psi(\TT))\ .
\]
with $\LL$ as 
\autoref{theconstructionL} and
with $\Psi$ as in
\autoref{theconstructionpsi}.
\ee
\ermk

And now the result we want to prove is the following.

\pro{Proallisomorphisms}
Let $\ct$ be a weakly approximable triangulated
category, and let $\ca\subset\cb$ be
a pair of subcategories in
the diagram \eqref{eq:incl}, with
$\ca,\cb\notin\{\ct^{c,b},\tsbb,\ct^b_c,\ct^-_c\}$.
If $\BB$ is any enhancement of
$\cb$, then the restriction map
\[
\Res\colon\AutL(\BB)\la\AutL(\Phi^?_{\cb,\ca}(\BB))
\]
is an isomorphism. Moreover, if $\ct$ satisfies one of the hypotheses
\be
\item
Either $\ct$ is coherent,
\item
Or we have $\ct^c\subset\ct_c^b$ and the inclusion  
makes $\ct^c$ a characteristic subcategory,
\ee
then we can also allow $\ca,\cb=\ct^b_c$.
\epro

\prf
It suffices to consider the case where there
is no intermediate category in the diagram
\eqref{eq:incl}; if we have a triple
$\ca\subset\cb\subset\cc$, then the case of the
pair $\ca\subset\cc$ follows by
transitivity from the pairs $\ca\subset\cb$
and $\cb\subset\cc$.

In the case $\ct^c\subset\tsb$, if we start with
an enhancement $\BB$ of $\cb=\tsb$,
then 
\autoref{lem:uniqenhTsb} and
\autoref{actualmap} give a natural
map $\alpha\colon\BB\la\Delta\comp\Phi^?_{\cb,\ca}(\BB)$
and establish that $\alpha$ is an isomorphism
in $\HoStabInfA$. 
By \autoref{Rmkautequivgrps} (iii) and (iv)
we have group homomorphisms
\[
\xymatrix@C-15pt{
\AutL(\BB)
\ar[rr]^-{\Res}
&&
\AutL(\Phi^?_{\cb,\ca}(\BB))
\ar[rr]^-{\mathrm{\wt\Delta}}
&&
\AutL(\Delta\comp\Phi^?_{\cb,\ca}(\BB))
}
\]
and, if $\gamma\colon\BB\la\BB$ is any
element of $\AutL(\BB)$, the naturality
of $\alpha\colon\BB\la\Delta\comp\Phi^?_{\cb,\ca}(\BB)$
gives the commutativity of the square
\[
\xymatrix@C+40pt{
\BB
\ar[r]^\gamma
\ar[d]_\alpha
&
\BB
\ar[d]^\alpha
\\
\Delta\comp\Phi^?_{\cb,\ca}(\BB)
\ar[r]^-{\wt\Delta\comp\Res(\gamma)}
&
\Delta\comp\Phi^?_{\cb,\ca}(\BB)
}
\]
The commutativity of this square, for every
$\gamma\in\AutL(\BB)$,
establishes that the composite
$\wt\Delta\comp\Res$ is an isomorphism
$\AutL(\BB)
\la
\AutL(\Delta\Phi^?_{\cb,\ca}(\BB))$.
But the composite 
\[\xymatrix@C-15pt{
\AutL(\Phi^?_{\cb,\ca}(\BB))
\ar[rr]^-{\mathrm{\wt\Delta}}
&&
\AutL(\Delta\comp\Phi^?_{\cb,\ca}(\BB))
\ar[rr]^-{\Res}
&&
\AutL(\Phi^?_{\cb,\ca}(\BB))
}\]
is obviously equal to the identity, and
the fact that $\Res$ is an
isomorphism immediately follows.

The other cases are similar: if
$(\ca,\cb)\in\{(\ct^-,\ct),(\ct^b,\ct^+),(\ct^+,\ct),(\ct^b,\ct^-),(\tsb,\ct^-)\}$ 
then the way to recover
$\BB\in\Enhq(\cb)$ from
$\Phi^?_{\cb,\ca}(\BB)$ is the
isomorphism
$\BB\la\LL\comp\Phi^?_{\cb,\ca}(\BB)$
or the isomorphism
$\BB\op\la\LL\comp\Phi^?_{\cb,\ca}(\BB)\op$,
depending on whether we apply
\autoref{L1.5} and \autoref{P15.5}
to the pair $(\ca,\cb)$ or to
the pair $(\ca\op,\cb\op)$. In either case
\autoref{actualmapagain} tells us that the
isomorphism is natural, and an argument
just like the one for the pair
$\ct^c\subset\tsb$ proves that the composite
\[
\xymatrix@C-15pt{
\AutL(\BB)
\ar[rr]^-{\Res}
&&
\AutL(\Phi^?_{\cb,\ca}(\BB))
\ar[rr]^-{\mathrm{\wt\LL}}
&&
\AutL(\LL\comp\Phi^?_{\cb,\ca}(\BB))
}
\]
is an isomorphism, while the composite
\[
\xymatrix@C-15pt{
\AutL(\Phi^?_{\cb,\ca}(\BB))
\ar[rr]^-{\mathrm{\wt\LL}}
&&
\AutL(\LL\comp\Phi^?_{\cb,\ca}(\BB))
\ar[rr]^-{\Res}
&&
\AutL(\Phi^?_{\cb,\ca}(\BB))
}
\]
is the identity.

In the case where we have $\ct^c\subset\ct^b$
there are more natural inclusions: the subcategory
$\tsb\subset\ct^b$ is characteristic and
satisfies $\ct^b=\fs(\tsb)$, and (if (ii) also holds)
the subcategory $\ct^c\subset\ct^b_c$ is characteristic and satisfies $\ct^b_c=\fs(\ct^c)$.
Thus if $(\ca,\cb)\in\{(\ct^c,\ct^b_c),(\tsb,\ct^b)\}$
and $\BB\in\Enhq(\cb)$,
the proof of
\autoref{Leasyanalog} gives an
isomorphism $\BB\la\Psi\comp\Phi^?_{\cb,\ca}(\BB)$,
and \autoref{actualmapagainagain} says that
the map is natural.
As before this tells us that the composite
\[
\xymatrix@C-15pt{
\AutL(\BB)
\ar[rr]^-{\Res}
&&
\AutL(\Phi^?_{\cb,\ca}(\BB))
\ar[rr]^-{\mathrm{\wt\Psi}}
&&
\AutL(\Psi\comp\Phi^?_{\cb,\ca}(\BB))
}
\]
is an isomorphism, while the composite
\[
\xymatrix@C-15pt{
\AutL(\Phi^?_{\cb,\ca}(\BB))
\ar[rr]^-{\mathrm{\wt\Psi}}
&&
\AutL(\Psi\comp\Phi^?_{\cb,\ca}(\BB))
\ar[rr]^-{\Res}
&&
\AutL(\Phi^?_{\cb,\ca}(\BB))
}
\]
is the identity.

Now the metric on $\cs_1=\tsb$ is always excellent, while
the metric on $\cs_2=\ct^c$ is excellent if and only if
$\ct$ is coherent. Both categories
are idempotent-complete, hence for $i=1,2$ we have that $\cs_i\subset\fl(\cs_i)$ is closed with respect to direct summands. Now for excellent
metrics on $\cs=\Hc(\SSS)$,
\autoref{P4.5} provides
an explicit natural transformation
$\SSS\op\la\Psi(\Psi(\SSS)\op)$ and proves
it to be an isomorphism in $\HoStabInfA$,
while \autoref{actualmapagainagainagain}
establishes the naturality of the map.
This makes the composite
\[
\xymatrix@C-15pt{
\AutL(\SSS)
\ar[rr]^-{\mathrm{\wt\Psi}}
&&
\AutL(\Psi(\SSS)\op)
\ar[rr]^-{\mathrm{\wt\Psi}}
&&
\AutL(\Psi(\Psi(\SSS)\op)\op)
}
\]
an isomorphism, but as
$\AutL(\Psi(\Psi(\SSS)\op)\op)\iso\AutL(\SSS)$
this tells us that $\AutL(\SSS)$
and 
$\AutL(\Psi(\SSS))$
are isomorphic.
\eprf

An immediate consequence of
\autoref{Proallisomorphisms} is:

\cor{corollaryautomorphisms}
Suppose $\ct$ is a weakly approximable triangulated category, such that the subcategories
$\ct^c,\tsb,\ct^b,\ct^-,\ct^+,\ct$ all
have unique enhancements. Then the automorphism
groups of these enhancements are all isomorphic.

If we furthermore assume that one of the following
holds
\be
\item
Either $\ct$ is coherent,
\item
Or we have $\ct^c\subset\ct_c^b$ and the inclusion  
makes $\ct^c$ a characteristic subcategory,
\ee
then $\ct^b_c$ also has a unique enhancement, and
the automorphism group of this unique enhancement
is the same.
\ecor

\prf
Under the added hypotheses, $\ct^b_c$ has a unique
enhancement by
\autoref{cor:finTwa}. The assertion about the 
automorphism groups being all isomorphic
comes directly from \autoref{Proallisomorphisms},
which tells us that all the restriction homomorphisms
are isomorphisms.
\eprf

\exm{exmallisomorphisms}
The theorem applies to the following
three examples:
\be
\item
$\ct=\Ho\Spe$, the homotopy category of
spectra.  
\item
$\ct=\Dqcs Z(X)$, where 
$X$ is a noetherian scheme and $Z$ is a closed subset.
\item
$\ct=\Dqc(X)$, where $X$ is a quasi-compact and quasi-separated scheme.
\ee
By this we mean that if
$\BB\in\HoStabInfA$
is any object, with $\Hc(\BB)=\ct^\star_\Box$,
where $\ct$ is as in (i), (ii) or (iii) and
the decoration is any of
$\ct^c,\ct^b_c,\tsb,\ct^b,\ct^-,\ct^+,\ct$,
then the automorphism group
$\AutL(\BB)$ is independent of
the decoration. 
\eexm

There are of course other group homomorphisms
one could study.
First of all, given any object $\TT\in\HoStabInfA$,
there is an obvious comparison map
$\Comp\colon\AutL(\TT)\la\Aut(\Hc(\TT))$.
By \autoref{Rmkautequivgrps} (i), given a triangulated
category $\ct$ and a characteristic
triangulated subcategory $\cs\subset\ct$, there is a
restriction homomorphism
$\Res\colon\Aut(\ct)\la\Aut(\cs)$.
And if $(\TT,\fF)$ is an enhancement of $\ct$ in $\HoStabInfA$, then the square below obviously commutes
\[
\xymatrix@C+30pt{
\AutL(\TT)
\ar[r]^-{\Res_\infty}
\ar[d]_-{\Comp_1}
&
\AutL(\Phi^?_{\ct,\cs}(\TT))
\ar[d]^-{\Comp_2}
\\
\Aut(\ct)
\ar[r]^-{\Res}
&
\Aut(\cs),
}
\]
where $\Comp_1$ in the above diagram is a shorthand for the compositions $\fF\comp\Comp\comp\fF^{-1}$, while
$\Comp_2$ stands for $\fF|_{\Phi^?_{\ct,\cs}(\TT)}\comp\Comp\comp(\fF|_{\Phi^?_{\ct,\cs}(\TT)})^{-1}$. In the light of
\autoref{Proallisomorphisms} it becomes interesting
to note the following:

\rmk{Corgeneralmonomorphism}
With the notation as in the commutative square
above, assume that the map
$\Res_1\colon\AutL(\TT)\la\AutL(\Phi^?_{\ct,\cs}(\TT))$
is an isomorphism.
Then
\be
\item
If the map 
$\Comp_2$ is a monomorphism
then so is $\Comp_1$.
\item
If the 
map $\Comp_2$ is a
split monomorphism, then
$\Comp_1$ is a split monomorphism.
\ee
\ermk

Combining \autoref{Corgeneralmonomorphism} with \autoref{cor:finTwa} and \autoref{corollaryautomorphisms} gives the following.

\cor{Rmkallgeneralmonomorphism}
Let $\ct$ be a weakly approximable
triangulated category satisfying 
one of the following
conditions:
\be
\item
Either $\ct$ is coherent.
\item
Or we have $\ct^c\subset\ct_c^b$ and the inclusion  
makes $\ct^c$ a characteristic subcategory.
\setcounter{enumiv}{\value{enumi}}
\ee
We suppose 
moreover that 
\be
\setcounter{enumi}{\value{enumiv}}
\item
The category $\ct$
has an enhancement in $\HoStabInfA$. 
\item
The category $\ct^c$
has a unique enhancement
$\TT^c\in\HoStabInfA$.
\item
The comparison map
$\Comp\colon\AutL(\TT^c)\la\Aut(\ct^c)$
is (split) injective.
\setcounter{enumiv}{\value{enumi}}
\ee
Then the groups $\AutL(\TT^\star_\Box)$ are all
isomorphic, where $\TT^\star_\Box$ denotes an enhancement (unique by \autoref{cor:finTwa}) of $\ct^\star_\Box\in\{\ct^c,\ct^b_c,\tsb,\ct^b,\ct^-,\ct^+,\ct\}$,
and all the natural restriction
maps among them are isomorphisms. Furthermore
all the comparison maps
$\Comp\colon\AutL(\TT^\star_\Box)\la\Aut(\ct^\star_\Box)$
are (split) injective.
\ecor

\prf
The fact that the natural
restriction maps
$\Res_\infty\colon\AutL(\TT^\star_\Box)\la\AutL(\TT^\dagger_{\dagger\dagger})$
are all isomorphisms was seen in
\autoref{corollaryautomorphisms}.
And now because every one of the categories
in the list contains either
$\ct^c$ or (in case $\ct$ is coherent) the category
$\ct^b_c$, the (split) injectivity in (v)
coupled with \autoref{Corgeneralmonomorphism}
implies the (split) injectivity of all the maps
$\Comp\colon\AutL(\TT^\star_\Box)\la\Aut(\ct^\star_\Box)$.
\eprf

In the geometric setting we get the following:

\pro{Literature}
Assume that $X$ is a scheme
proper over a field $\kK$, and suppose further that
it has depth $\geq 1$.
Let $\ct=\Dqc(X)$, and therefore $\ct^c=\dperf{X}$.
Then the hypotheses of
\autoref{Rmkallgeneralmonomorphism} are satisfied, in fact the map
$\Comp\colon\AutL(\TT^c)\la\Aut(\ct^c)$ is an isomorphism---in
particular split injective. Thus, all the maps
$\Comp\colon\AutL(\TT^\star_\Box)\la\Aut(\ct^\star_\Box)$
are split injective, and their images
are all isomorphic.
\epro

\prf
First we claim that there is a natural injective map
\[
\Gamma\colon\Aut(\ct^c)\la\Iso(\Dqc(X\times X)),
\]
where $\Iso(\cc)$ denotes the isomorphism classes of objects in a category $\cc$. In fact $\Gamma$ maps $\fF\in\Aut(\ct^c)$ to the isomorphism class of an object $E\in\Dqc(X\times X)$ such that $\fF$ is isomorphic to the Fourier-Mukai functor with kernel $E$. Thus the claim amounts to the fact that every such $\fF$ is of Fourier-Mukai type with unique (up to isomorphism) kernel. Now, if $X$ is smooth and projective (over $\kK$) this is a classical result going back to \cite[2.2 Theorem]{Orlov97}. Moreover, $\fF$ is known to be of Fourier-Mukai type when $X$ is either projective with depth $\ge1$ (by \cite[Corollary 9.13]{Lunts-Orlov10}) or smooth and proper (by \cite[Theorem 1]{OlanderOrlov}). The same is true when $X$ is proper with depth $\ge1$ because the same argument of \cite{OlanderOrlov}, extensively described in \cite[Chapter 3]{OlanderThesis}, works in this more general setting, since the set of objects in \cite[Section 3.3]{OlanderThesis} is still almost ample. As for uniqueness of the kernel, the proof of \cite[Theorem 3.0.1]{OlanderThesis} in the smooth and proper case (which is formally the same as the one in \cite[Section 4.3]{Canonaco-Stellari07}) does not really need the smoothness assumption.

On the other hand, by \cite[Corollary 8.12]{To}, there is also an injective map
\[
\Gamma_\infty\colon\AutL(\TT^c)\la\Iso(\Dqc(X\times X))
\]
(for a suitable choice of the enhancement $\TT$ of $\ct$) such that $\Gamma$ and $\Gamma_\infty$ have the same image. Therefore, to conclude that $\Comp\colon\AutL(\TT^c)\la\Aut(\ct^c)$ is an isomorphism, it is enough to know that $\Gamma\comp\Comp=\Gamma_\infty$. This equality would follow from an unproved claim in \cite[Section 8.3]{To}. While a full proof of the claim will appear in \cite{COS3}, a proof of a weaker version of the claim is already available in \cite[Theorem 1.1]{LS}, and allows to obtain the desired equality $\Gamma\comp\Comp=\Gamma_\infty$ under the extra hypothesis that every object of $\dperf{X}$ is isomorphic to a strictly perfect complex.
\eprf


\bigskip

{\small\noindent{\bf Acknowledgements.} Part of this work was carried out while the third author was visiting the Laboratoire de Math\'ematiques d'Orsay (Universit\'e Paris-Saclay) and the Institut des Hautes \'Etudes Scientifiques (Paris) whose warm hospitality is gratefully acknowledged. We are very grateful to Benjamin Antieau, Federico Binda, Denis-Charles Cisinski and Alberto Vezzani for patiently answering our questions. It is a pleasure to thank Evgeny Shinder, for suggesting to us the possibility of applying our results to autoequivalence groups in the geometric setting.}

\bibliographystyle{abbrv}

\end{document}